\newtheorem{theorem}{Theorem}[section]
\newtheorem{lemma}[theorem]{Lemma}
\newtheorem{corollary}[theorem]{Corollary}
\theoremstyle{definition}
\newtheorem{remark}[theorem]{Remark}
\numberwithin{equation}{section}
\renewcommand{\l}{\lambda}
\newcommand{\RR}{\ensuremath{\mathbb{R}}}
\newcommand{\prtl}{\ensuremath{\partial}}
\newcommand{\supp}{\ensuremath{\text{supp}}}
\newcommand{\veps}{\ensuremath{\varepsilon}}
\newcommand{\ind}{\ensuremath{\mathbf{1}}}
\newcommand{\tubel}{\mathcal{T}_{\l^{-1/2}}(\gamma)}
\newcommand{\sqrtl}{\sqrt{\Delta_g}}
\newcommand{\Rd}{{\mathbb R}^d}
\newcommand{\dgt}{d_{\tilde g}}
\newcommand{\tg}{\tilde g}
\newcommand{\Stab}{\text{Stab}(\tilde \gamma)}
\newcommand{\sqrtg}{\sqrt{\Delta_{\tilde g}}}
\newcommand{\Sstab}{S_\la^{\text{Stab}}}
\newcommand{\Sosc}{S_\la^{\text{Osc}}}
\newcommand{\Soscg}{S_\la^{\text{Osc}, \gamma}}
\newcommand{\Sstabg}{S_\la^{\text{Stab}, \gamma}}
\newcommand{\notstab}{\Gamma\backslash \Stab}
\newcommand{\e}{\varepsilon}
\newcommand{\la}{\lambda}
\newcommand{\R}{{\mathbb R}}
\newcommand{\loc}{{\text{\rm loc}}}
\title[Kakeya-Nikodym averages in higher dimensions]{On Kakeya-Nikodym averages, $L^p$-norms
and lower bounds for nodal sets of eigenfunctions in higher dimensions}
\thanks{The first author was supported in part by the National Science Foundation grant DMS-1001529, and the second by 
the National Science Foundation grant DMS-11069175 and the Simons Foundation.}
\author[M. D. Blair]{Matthew D. Blair}
\address{Department of Mathematics and Statistics, University of New Mexico, Albuquerque, NM 87131, USA}
\email{blair@math.unm.edu}
\author[C. D. Sogge]{Christopher D. Sogge}
\address{Department of Mathematics, Johns Hopkins University, Baltimore, MD 21093, USA}
\email{sogge@jhu.edu}
\begin{document}
\begin{abstract}
We extend a result of the second author \cite[Theorem 1.1]{soggekaknik} to dimensions $d \geq 3$ which relates the size of $L^p$-norms
of eigenfunctions for $2<p<\frac{2(d+1)}{d-1}$ to the amount of $L^2$-mass in shrinking tubes about unit-length geodesics.  The proof uses
bilinear  oscillatory integral estimates of Lee~\cite{leebilinear} and a variable coefficient variant of an  ``$\veps$ removal lemma'' of   Tao and Vargas~\cite{tv1}.  We also use H\"ormander's \cite{HorOsc} $L^2$ oscillatory integral theorem and the Cartan-Hadamard theorem to show that, under the assumption of nonpositive curvature, the $L^2$-norm of eigenfunctions $e_\la$ over unit-length tubes of width $\la^{-\frac12}$ goes to zero.  Using our main estimate,
we deduce that, in this case, the $L^p$-norms of eigenfunctions for the above range of exponents is relatively small.  As a result, we can
slightly improve the known lower bounds for nodal sets in dimensions $d\ge3$  of Colding and Minicozzi~\cite{CM} in the special case of (variable) nonpositive curvature.
\end{abstract}
\maketitle

\centerline{ \bf In memoriam: {\em Lars H\"ormander (1931-2012)}}

\section{Introduction and main results}

Let $(M,g)$ be a smooth, compact boundaryless Riemannian manifold of dimension $d \geq 3$.  Let $\Delta_g$ be the nonnegative Laplace-Beltrami operator and consider eigenfunctions $e_\l$ satisfying $\Delta_g e_\l = \l^2 e_\l$ with $\l \geq 0$.
If $\varPi$ denotes the space of unit length geodesics and $dz$ the volume element associated with the metric $g$, then our main result is the following
generalizations of \cite[Theorem 1.1]{soggekaknik}:
\begin{theorem}\label{thm:mainthm}
Let $e_\l$, $\la\ge 1$,  be an eigenfunction and $\frac{2(d+2)}{d} <q <\frac{2(d+1)}{d-1}$.  Then there is a uniform constant $C<\infty$ so that
given $\e>0$ we can find a constant $C_\e$ so that
\begin{multline}\label{efcninequality}
\|e_\l\|_{L^q(M)}^q \leq \veps \l^{q(\frac{d-1}{2})(\frac 12-\frac 1q)}\|e_\l\|_{L^2(M)}^q + C\|e_\l\|_{L^2(M)}^q\\ + C_\veps \l^{q(\frac{d-1}{2})(\frac 12-\frac 1q)}\|e_\l\|^2_{L^2(M)}
\sup_{\gamma \in \varPi} \left(\int_{\mathcal{T}_{\l^{-1/2}}(\gamma)} |e_\l(z)|^2\,dz\right)^{\frac{q-2}{2}},
\end{multline}
if
\begin{equation}\label{tubedef}
\tubel=\{x\in M: d_g(x,\gamma)\le \la^{-\frac12}\}
\end{equation}
denotes the $\la^{-\frac12}$-tube about $\gamma$, with $d_g(\, \cdot \, ,\, \cdot\, )$ being the Riemannian distance function.
\end{theorem}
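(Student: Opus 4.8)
The plan is to reduce \eqref{efcninequality} to an estimate for an oscillatory integral operator of Carleson--Sj\"olin type, to decompose that operator into pieces adapted to $\la^{-1/2}$-tubes, and then to run a Whitney/bilinear analysis in which the transversal tube interactions are handled by the bilinear oscillatory integral estimates of Lee \cite{leebilinear} together with a variable-coefficient version of the $\veps$-removal lemma of Tao and Vargas \cite{tv1}, while the nearly parallel interactions are absorbed into the Kakeya--Nikodym average of $e_\la$.

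I would start by fixing $\rho\in\mathcal S(\R)$ with $\rho(0)=1$ and $\widehat\rho$ supported near $0$, so that $\chi_\la:=\rho(\la-\sqrtl\,)$ reproduces $e_\la$. By finite speed of propagation and the Hadamard parametrix, $\chi_\la=A_\la+R_\la$, where $R_\la$ has kernel $O_N(\la^{-N})$ (so that $\|R_\la e_\la\|_{L^q(M)}\lesssim\|e_\la\|_{L^2(M)}$, which already accounts for the middle term of \eqref{efcninequality}) and $A_\la$ is a finite sum of oscillatory integral operators whose phases are built from the Riemannian distance $d_g$, and hence satisfy H\"ormander's \cite{HorOsc} nondegeneracy and curvature conditions. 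Fixing a partition of unity on the cosphere bundle into $\la^{-1/2}$-caps, with associated microlocal cutoffs $Q_\nu$, I write $e_\la=\sum_\nu e_\la^\nu$ with $e_\la^\nu:=Q_\nu e_\la$ and $\sum_\nu\|e_\la^\nu\|_{L^2(M)}^2\approx\|e_\la\|_{L^2(M)}^2$; from the reproducing formula together with the standard localization of the microlocalized cluster $Q_\nu\chi_\la$ to a $\la^{-1/2}$-tube about the geodesic $\gamma^\nu_x$ through $x$ in the direction $\nu$, one obtains the pointwise Kakeya--Nikodym bound $|e_\la^\nu(x)|^2\lesssim\la^{\frac{d-1}2}\int_{\mathcal T_{\la^{-1/2}}(\gamma^\nu_x)}|e_\la|^2$, and in particular $\|e_\la^\nu\|_{L^\infty(M)}^2\lesssim\la^{\frac{d-1}2}\,\mathfrak A$, where $\mathfrak A:=\sup_\gamma\int_{\mathcal T_{\la^{-1/2}}(\gamma)}|e_\la|^2$.

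Now I write $\|e_\la\|_{L^q(M)}^2=\big\||e_\la|^2\big\|_{L^{q/2}(M)}=\big\|\sum_{\nu,\nu'}e_\la^\nu\,\overline{e_\la^{\nu'}}\big\|_{L^{q/2}(M)}$ and split the index set by a Whitney decomposition into pairs of angular separation $\sim 2^{-j}$ for $0\le j\lesssim\log\la$, together with the remaining adjacent pairs. For each separation scale $2^{-j}$ a parabolic rescaling reduces the interaction to the unit scale, where Lee's bilinear theorem applies (note $q/2>\frac{d+2}d>\frac{d+3}{d+1}$); summing the resulting estimates over $j$ is the point at which $q>\frac{2(d+2)}d$ enters, since this is precisely the exponent at which the geometric sum in $2^{-j}$ converges to a bound that improves on the Knapp bound $\la^{q\sigma(q)}\|e_\la\|_{L^2(M)}^q$, $\sigma(q)=\tfrac{d-1}2(\tfrac12-\tfrac1q)$, by a positive power of $\la$. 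Applying the variable-coefficient $\veps$-removal lemma to dispose of the $\la^\veps$ loss incurred in passing from these local estimates on $\la$-balls to the global one, I expect the transversal contribution to be bounded by $\veps\la^{q\sigma(q)}\|e_\la\|_{L^2(M)}^q+C\|e_\la\|_{L^2(M)}^q$. The adjacent pairs reassemble (after a harmless fattening of the caps) into the square function $\sum_\nu|e_\la^\nu|^2$; combining the $L^1$ identity $\big\|\sum_\nu|e_\la^\nu|^2\big\|_{L^1(M)}\approx\|e_\la\|_{L^2(M)}^2$ with the pointwise bound above, via a square-function argument that exploits the slab structure of each spectrum so as not to lose a power of $\la$, I expect $\big\|\sum_\nu|e_\la^\nu|^2\big\|_{L^{q/2}(M)}^{q/2}\lesssim\la^{q\sigma(q)}\|e_\la\|_{L^2(M)}^2\,\mathfrak A^{\frac{q-2}2}$. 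Adding the two contributions yields \eqref{efcninequality} for $\la$ large; for $\la$ in a bounded range all norms of $e_\la$ are comparable, and since $\mathfrak A\gtrsim\la^{-\frac{d-1}2}\|e_\la\|_{L^2(M)}^2$ the last term of \eqref{efcninequality} already dominates $\|e_\la\|_{L^2(M)}^q$, so the estimate holds with a suitably large constant there.

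The main obstacle will be the variable-coefficient $\veps$-removal step: the argument of \cite{tv1} is written for the Euclidean extension operator, and transplanting its pigeonholing-and-patching scheme to the curved operators $A_\la$ --- uniformly in the curvature data, and in a way that does not damage the Kakeya--Nikodym term --- is the technical heart of the proof. Two further difficulties deserve mention: one must verify that Lee's ellipticity and transversality hypotheses survive the parabolic rescaling at every dyadic scale down to $\la^{-1/2}$ and that these rescalings are compatible with the cap decomposition; and one must handle the genuinely singular near-diagonal part of $\chi_\la$, on which the crude kernel bound used for the pointwise estimate degenerates and which has to be treated as a rescaled oscillatory integral operator in its own right. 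Finally, the square-function estimate for the adjacent pairs must be carried out without loss --- the naive bounds fall short by exactly the factor $\la^{q\sigma(q)}$ --- which is what forces one to use that the spectrum of each $e_\la^\nu$ lies in a $\la^{1/2}\times\cdots\times\la^{1/2}\times1$ slab.
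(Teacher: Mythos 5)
Your high-level outline — decompose the near-diagonal and transversal interactions Whitney-style, run Lee's bilinear estimates plus a variable-coefficient $\veps$-removal lemma on the transversal part, and recover the Kakeya--Nikodym factor from the near-diagonal — matches the paper's strategy, and the paragraph on the $\veps$-removal difficulties correctly identifies one of the two technical hearts of the argument. But there is a genuine gap in your treatment of the near-diagonal term, and it is one you half-concede at the end.

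The problem is the formulation. You propose to bound $\big\|\sum_\nu|e_\la^\nu|^2\big\|_{L^{q/2}}$, with $e_\la^\nu=Q_\nu e_\la$ a frequency-localized $\la^{-1/2}$-cap piece, via an $L^1$--$L^\infty$ interpolation for the square function. Your pointwise bound $|e_\la^\nu(x)|^2\lesssim\la^{\frac{d-1}{2}}\int_{\mathcal T_{\la^{-1/2}}(\gamma_x^\nu)}|e_\la|^2$ is correct, and summing it over $\nu$ (the tubes through a fixed $x$ have bounded overlap) gives $\big\|\sum_\nu|e_\la^\nu|^2\big\|_{L^\infty}\lesssim\la^{\frac{d-1}{2}}\|e_\la\|_{L^2}^2$, but with \emph{no} factor of $\mathfrak A=\sup_\gamma\int_{\tubel}|e_\la|^2$. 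Interpolating this with $\big\|\sum_\nu|e_\la^\nu|^2\big\|_{L^1}\approx\|e_\la\|_{L^2}^2$ gives exactly $\la^{q\sigma(q)}\|e_\la\|_{L^2}^q$ and nothing better — the Knapp bound with no $\mathfrak A$ at all — so the approach misses the Kakeya--Nikodym gain entirely. And this is unavoidable at the level of $\sum_\nu|e_\la^\nu|^2$: for an equidistributed $e_\la$, each $e_\la^\nu$ has its mass spread over a $\sim\la^{\frac{d-1}{2}}$-fold family of parallel tubes (frequency localization does not spatially localize to a single tube), so the square function is of order $1$ everywhere while $\mathfrak A\sim\la^{-\frac{d-1}{2}}$ is small. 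The vague appeal to the ``slab structure of each spectrum'' does not fix this: slab spectral support explains the $L^\infty$ bound and the $L^1$ identity, but it does not put $\mathfrak A$ in where it needs to go.

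The paper sidesteps this by \emph{not} estimating a pure $L^{q/2}$ norm of a square function. It instead works with the weighted quantity $\int |Th|^2\,|f|^{q-2}\,dz$ (with $f=h=e_\la$), where $T$ is the oscillatory integral operator with phase $d_g(z,(t,0))$ acting on functions of $t$ on a transversal hypersurface $\Sigma$. The Whitney decomposition is then done spatially on the input $h$, into pieces $h_\nu^{j_0}$ supported on $\la^{-1/2}$-cubes of $\Sigma$. This matters: unlike $Q_\nu e_\la$, the image $Th_\nu^{j_0}$ genuinely \emph{is} localized to a $\la^{-1/2}$-neighborhood of the fan of geodesics emanating from a $\la^{-1/2}$-cube, which after passing to geodesic normal coordinates centered at $\nu$ decomposes into $\la^{-1/2}$-angular sectors $S_l$. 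Because the near-diagonal quantity carries the weight $|f|^{q-2}$, one can peel off $\sup_\gamma\int_{\tubel}|f|^{q-2}\,dz$ by restricting attention to those sectors, and what is left is a discrete $L^2\to\ell^2$ oscillatory sum bound (the adjoint of \eqref{l2L2dual}) at the representative points $z_l$. Summing $\|h_\nu^{j_0}\|_{L^2}^2$ over $\nu$ via orthogonality then gives the last term of \eqref{efcninequality}; a final Hölder converts $\sup_\gamma\int_{\tubel}|f|^{q-2}$ into $\la^{-\frac{(d-1)(4-q)}{4}}\mathfrak A^{\frac{q-2}{2}}$, and the exponents match. This weighted bilinear-form setup, together with the spatial (rather than frequency) localization of the cap pieces, is the missing idea in your proposal; it is precisely what lets the near-diagonal term ``see'' the Kakeya--Nikodym average without loss.
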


\begin{corollary}\label{thm:maincorollary}
The following are equivalent for any subsequence of $L^2$-normalized eigenfunctions $\{e_{\l_{j_k}}\}_{k=1}^\infty$:
\begin{align}
\limsup_{k\to \infty} \sup_{\gamma \in \varPi} \int_{\mathcal{T}_{\l_{j_k}^{-1/2}}(\gamma)} |e_{\l_{j_k}}(z)|^2\,dz &= 0 \label{kakniknonsat}\\
\limsup_{k\to \infty} \l_{j_k}^{-\frac{d-1}2(\frac12-\frac1p)}\|e_{\l_{j_k}}\|_{L^p(M)} &= 0 &\mbox{for any $\;2 < p < \frac{2(d+1)}{d-1}$} \label{lpnonsat} .
\end{align}
\end{corollary}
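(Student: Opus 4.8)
The plan is to prove the two implications $\eqref{kakniknonsat}\Rightarrow\eqref{lpnonsat}$ (for \emph{every} admissible $p$) and $\eqref{lpnonsat}\Rightarrow\eqref{kakniknonsat}$ (using $\eqref{lpnonsat}$ for a \emph{single} $p$); these two statements close the logical loop and give the full equivalence. The first is where Theorem~\ref{thm:mainthm} does essentially all of the work, together with a short interpolation step needed to recover the full range $2<p<\frac{2(d+1)}{d-1}$ from the narrower range of exponents in the theorem; the second is elementary, requiring only H\"older's inequality and a volume bound for tubes. Throughout we normalize $\|e_{\l_{j_k}}\|_{L^2(M)}=1$ and use that $\l_{j_k}\to\infty$.

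For $\eqref{lpnonsat}\Rightarrow\eqref{kakniknonsat}$, fix any $p$ with $2<p<\frac{2(d+1)}{d-1}$ for which $\eqref{lpnonsat}$ holds. For each $\gamma\in\varPi$, H\"older's inequality on the set $\tubel$ gives
\[
\int_{\tubel}|e_\l(z)|^2\,dz\ \le\ |\tubel|^{1-\frac2p}\,\|e_\l\|_{L^p(M)}^2 .
\]
Since $\gamma$ has unit length and the tube has radius $\l^{-1/2}\le1$, one has $|\tubel|\le C\l^{-\frac{d-1}2}$ with $C$ independent of $\gamma$ (by compactness of $M$ and of $\varPi$). Because $\frac{d-1}2\cdot\frac12\bigl(1-\frac2p\bigr)=\frac{d-1}2\bigl(\frac12-\frac1p\bigr)$, the right-hand side is $\le C\bigl(\l^{-\frac{d-1}2(\frac12-\frac1p)}\|e_\l\|_{L^p(M)}\bigr)^2$; taking the supremum over $\gamma$ and then the $\limsup$ along the subsequence yields $\eqref{kakniknonsat}$.

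For $\eqref{kakniknonsat}\Rightarrow\eqref{lpnonsat}$, first consider an exponent $q$ with $\frac{2(d+2)}{d}<q<\frac{2(d+1)}{d-1}$. Dividing $\eqref{efcninequality}$ by $\l^{q(\frac{d-1}2)(\frac12-\frac1q)}$ and using the normalization, we obtain
\[
\bigl(\l^{-\frac{d-1}2(\frac12-\frac1q)}\|e_\l\|_{L^q(M)}\bigr)^q\ \le\ \veps+C\l^{-q(\frac{d-1}2)(\frac12-\frac1q)}+C_\veps\Bigl(\sup_{\gamma\in\varPi}\int_{\tubel}|e_\l(z)|^2\,dz\Bigr)^{\frac{q-2}2}.
\]
Since $q>2$ the exponent $q(\frac{d-1}2)(\frac12-\frac1q)$ is strictly positive, so the middle term tends to $0$ as $\l\to\infty$; evaluating along $\l_{j_k}$ and invoking $\eqref{kakniknonsat}$ for the last term shows $\limsup_k\bigl(\l_{j_k}^{-\frac{d-1}2(\frac12-\frac1q)}\|e_{\l_{j_k}}\|_{L^q(M)}\bigr)^q\le\veps$; letting $\veps\downarrow0$ and taking $q$-th roots gives $\eqref{lpnonsat}$ for this $q$. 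For the remaining range $2<p\le\frac{2(d+2)}{d}$, fix any $q$ in the open interval above, write $\frac1p=\frac\theta2+\frac{1-\theta}q$ with $\theta\in(0,1)$, and use log-convexity of $L^p$-norms together with the normalization: $\|e_\l\|_{L^p(M)}\le\|e_\l\|_{L^2(M)}^\theta\|e_\l\|_{L^q(M)}^{1-\theta}=\|e_\l\|_{L^q(M)}^{1-\theta}$. Since $\frac12-\frac1p=(1-\theta)\bigl(\frac12-\frac1q\bigr)$, this gives $\l^{-\frac{d-1}2(\frac12-\frac1p)}\|e_\l\|_{L^p(M)}\le\bigl(\l^{-\frac{d-1}2(\frac12-\frac1q)}\|e_\l\|_{L^q(M)}\bigr)^{1-\theta}$, whose $\limsup$ along the subsequence vanishes by the case already treated.

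There is no substantial obstacle here: all the analytic content is carried by Theorem~\ref{thm:mainthm}, and the rest is bookkeeping. The one point that demands genuine care is the interpolation step, where one must verify that the Lebesgue exponents and the powers of $\l$ line up exactly --- precisely the identity $\frac12-\frac1p=(1-\theta)(\frac12-\frac1q)$ --- so that the weighted $L^p$ quantity is dominated by a power of the weighted $L^q$ quantity with no leftover growing power of $\l$.
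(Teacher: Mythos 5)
Your proposal is correct and follows essentially the same route as the paper: use Theorem~\ref{thm:mainthm} directly for $q\in(\tfrac{2(d+2)}{d},\tfrac{2(d+1)}{d-1})$, interpolate with $L^2$ to cover $2<p\le\tfrac{2(d+2)}{d}$, and for the converse apply H\"older on the tube together with the volume bound $|\tubel|\lesssim\la^{-(d-1)/2}$. You have simply supplied the details the paper labels ``routine,'' and the exponent arithmetic in both the H\"older and interpolation steps checks out.
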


\begin{proof}[Proof of Corollary]
Given Theorem \ref{thm:mainthm}, it is routine to verify that \eqref{kakniknonsat} implies \eqref{lpnonsat} for $\frac{2(d+2)}{d} <p <\frac{2(d+1)}{d-1}$. The remaining values of $p$ then follow from interpolation.  For the converse, observe that H\"older's inequality gives
$$
\int_{\mathcal{T}_{\l^{-1/2}}(\gamma)} |e_{\l}(z)|^2\,dz \lesssim \l^{-\left(\frac{d-1}{2}\right)\left(1-\frac 2p\right)}\|e_{\l}\|_{L^p(M)}^2
,
$$
and the implication follows.
\end{proof}

In the case when $(M,g)$ has nonpositive sectional curvatures, we shall be able to show that \eqref{lpnonsat} holds for the full sequence of eigenvalues and hence extend the two-dimensional results of the second author and Zelditch~\cite{SZ4} to higher dimensions:

\begin{theorem}\label{negthm}
Let $(M,g)$ be a compact boundaryless manifold of dimension $d\ge 2$.  Assume further that $(M,g)$ has everywhere nonpositive sectional curvatures.
Then if $0=\la_0<\la_1\le \la_2\le \la_3\dots$ are the eigenvalues of $\sqrtl$ we have
\begin{equation}\label{i.1}
\limsup_{\la_j\to \infty}\left(\, \sup_{\gamma\in\varPi}\int_{{\mathcal{T}_{\la_j^{-1/2}}(\gamma)}} \, |e_{\la_j}|^2 \, dx \, \right)=0.
\end{equation}
Consequently, if $2<p<\frac{2(d+1)}{d-1}$, we have, in this case,
\begin{equation}\label{i.1.1}
\limsup_{\la_j\to \infty} \la_j^{-\frac{d-1}2(\frac12-\frac1p)}\|e_{\la_j}\|_{L^q(M)}=0.
\end{equation}
\end{theorem}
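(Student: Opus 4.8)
Throughout, $e_\la$ is $L^2$-normalized. The bound \eqref{i.1.1} follows at once from \eqref{i.1}: the latter is precisely the assertion that \eqref{kakniknonsat} holds along the full sequence of eigenvalues, so Corollary~\ref{thm:maincorollary}---which is deduced from Theorem~\ref{thm:mainthm} by interpolating against $\|e_\la\|_{L^2(M)}=1$---yields \eqref{lpnonsat}, i.e.\ \eqref{i.1.1}, for every $2<p<\frac{2(d+1)}{d-1}$. (When $d=2$ this passage uses instead the two-dimensional result \cite{soggekaknik}, and \eqref{i.1} itself is then due to \cite{SZ4}.) So the entire task is to prove the Kakeya--Nikodym estimate \eqref{i.1}, and this is the only place where the curvature hypothesis is used, via the Cartan--Hadamard theorem.

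To prove \eqref{i.1} I would argue as follows. Fix a real even $\rho\in\mathcal S(\RR)$ with $\rho(0)=1$ and $\widehat\rho$ supported in a small neighborhood of $0$, and for a parameter $T\ge1$ set $\chi_\la=\rho(T(\la-\sqrtl))$, so that $\chi_\la e_\la=e_\la$. Choosing $\psi_\gamma\ge0$ smooth, equal to $1$ on $\tubel$ and supported in the double tube, the relations $e_\la=\chi_\la e_\la$ and $\psi_\gamma=(\psi_\gamma^{1/2})^{*}\psi_\gamma^{1/2}$, the self-adjointness of $\chi_\la$, and a $TT^{*}$ step give
\begin{equation*}
\int_{\tubel}|e_\la|^2\,dx\ \le\ \big\langle\psi_\gamma e_\la,\,e_\la\big\rangle\ =\ \big\|\,\psi_\gamma^{1/2}\chi_\la e_\la\,\big\|_{L^2(M)}^2\ \le\ \big\|\,\psi_\gamma^{1/2}\,\rho^2\!\big(T(\la-\sqrtl)\big)\,\psi_\gamma^{1/2}\,\big\|_{L^2(M)\to L^2(M)}.
\end{equation*}
Since a unit geodesic is covered by $O(1)$ subsegments of length $<\mathrm{inj}(M)$, it suffices to prove, for each fixed $T$ and uniformly over geodesics $\gamma$ of length $<\mathrm{inj}(M)$,
\begin{equation}\label{plan:goal}
\limsup_{\la\to\infty}\ \big\|\,\psi_\gamma^{1/2}\,\rho^2\!\big(T(\la-\sqrtl)\big)\,\psi_\gamma^{1/2}\,\big\|_{L^2(M)\to L^2(M)}\ \le\ \frac{C_0}{T};
\end{equation}
letting $T\to\infty$ in \eqref{plan:goal} then gives \eqref{i.1}. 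The order of limits---$\la\to\infty$ first, $T\to\infty$ afterwards---is essential, and uniformity over the compact set $\varPi$ comes for free once \eqref{plan:goal} is established for each short $\gamma$.

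For \eqref{plan:goal} I would analyze the kernel of $\rho^2(T(\la-\sqrtl))$ by the wave equation. Writing it as a Fourier integral of $\cos(t\sqrtl)$ over $|t|\lesssim T$ and using finite propagation speed together with the Cartan--Hadamard theorem---the universal cover $(\widetilde M,\widetilde g)$ is diffeomorphic to $\Rd$, its geodesics are unique and minimizing, there are no conjugate points, and $d_{\widetilde g}$ is smooth off the diagonal---one lifts and sums over the deck group $\Gamma$ to obtain, modulo an $O(\la^{-N})$ error,
\begin{equation*}
\rho^2\!\big(T(\la-\sqrtl)\big)(x,y)\ =\ \sum_{\sigma}\, e^{i\la\, d_{\widetilde g}(\widetilde x,\sigma\widetilde y)}\,A_\sigma(x,y;\la),
\end{equation*}
the sum over the finitely many $\sigma\in\Gamma$ with $d_{\widetilde g}(\widetilde x,\sigma\widetilde y)\lesssim T$ (for lifts $\widetilde x,\widetilde y$ near $\gamma$), where $A_\sigma$ is a symbol of order $\frac{d-1}{2}$ with $|A_\sigma|\lesssim T^{-1}\la^{(d-1)/2}\big(1+\la\, d_{\widetilde g}(\widetilde x,\sigma\widetilde y)\big)^{-(d-1)/2}$. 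The identity term $\sigma=e$ is the main one: its phase $\la\, d_g(x,y)$ vanishes on the diagonal, and here I would pass to the constant-coefficient model on $\Rd$ (the metric being Euclidean up to $O(|x|^2)$, which costs only lower-order terms) and invoke the Knapp/square-function fact that a function whose frequencies lie in the annulus $\big\{\,\big|\,|\xi|-\la\,\big|\lesssim T^{-1}\,\big\}$ cannot carry more than a fraction $\sim T^{-1}$ of its $L^2$-mass in a tube of width $\la^{-1/2}$; this bounds the $\sigma=e$ contribution to the operator norm by $C_0/T$. For each $\sigma\ne e$ the reduction to short $\gamma$ forces $d_{\widetilde g}(\widetilde x,\sigma\widetilde y)\ge c>0$ uniformly (since $\sigma$ displaces every point of $\widetilde M$ by at least the systole $2\,\mathrm{inj}(M)$ while the lifted tube has diameter $<\mathrm{inj}(M)$), so $A_\sigma$ is a genuine symbol with an oscillatory factor $e^{i\la d_{\widetilde g}(\widetilde x,\sigma\widetilde y)}$ whose phase is smooth and, by Cartan--Hadamard, nondegenerate in the sense required by H\"ormander's $L^2$ oscillatory integral theorem; rescaling the $(d-1)$ transverse tube variables by $\la^{1/2}$ and applying that theorem shows each such term has operator norm $\to0$ as $\la\to\infty$, uniformly in $\gamma$; as there are only finitely many relevant $\sigma$ for fixed $T$, their total contribution tends to $0$, leaving only the $C_0/T$ from $\sigma=e$ in \eqref{plan:goal}.

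I expect the main obstacle to be the non-identity terms: establishing the nondegeneracy of the phases $\la\, d_{\widetilde g}(\widetilde x,\sigma\widetilde y)$ (after the $\la^{-1/2}$ rescaling of the tube) with enough uniformity in $\sigma$ and in $\gamma\in\varPi$ to run H\"ormander's theorem and extract genuine decay in $\la$. This is exactly where the nonpositivity of the curvature is indispensable, since Cartan--Hadamard removes conjugate points and thereby makes the Hadamard parametrix globally valid for times $\sim T$ without caustic corrections and the connecting-geodesic-length phases nonsingular; one must additionally account for the lower-dimensional loci on which a connecting geodesic meets $\gamma$ orthogonally (where the transverse mixed Hessian degenerates) and check that the lower-order terms of the parametrix, and the Euclidean comparison error for $\sigma=e$, are negligible. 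The remaining points---the compactness argument making the per-$\gamma$ bounds uniform, and the deduction of \eqref{i.1} from \eqref{plan:goal}---I would expect to be routine.
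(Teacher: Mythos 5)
Your overall plan is close to the paper's: reduce to a uniform $L^2(\tubel)\to L^2(\tubel)$ bound for the time-averaged operator $\rho^2(T(\la-\sqrtl))$, use finite propagation speed to express its kernel as a sum over deck transforms $\sigma\in\Gamma$ of Hadamard-parametrix terms, and aim for a bound of the form $C T^{-a}+C_T\la^{-b}$ so that letting $\la\to\infty$ first and then $T\to\infty$ yields \eqref{i.1}. The reduction of \eqref{i.1.1} from \eqref{i.1} via Corollary~\ref{thm:maincorollary} is also correct.

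The genuine gap is in your treatment of the non-identity deck transforms. You lump every $\sigma\neq e$ together and assert that the phase $d_{\tilde g}(\tilde x,\sigma\tilde y)$ is ``nondegenerate in the sense required by H\"ormander's $L^2$ oscillatory integral theorem,'' so that each such term has operator norm $\to 0$ as $\la\to\infty$. This is false when $\gamma$ lies on a closed geodesic and $\sigma$ belongs to the stabilizer $\mathrm{Stab}(\tilde\gamma)\subset\Gamma$ of the lifted geodesic. For such $\sigma$ one has $\sigma(\tilde\gamma)=\tilde\gamma$, the connecting geodesic from $\tilde x$ to $\sigma\tilde y$ runs \emph{along} $\tilde\gamma$ (not transversal to it), and the degenerate direction of the mixed Hessian $\nabla^2_{x,y}\,d_{\tilde g}(\tilde x,\sigma\tilde y)$ is precisely the longitudinal tube direction. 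After removing one transverse coordinate (as one must, to exploit the $\la^{-1/2}$ tube width), the resulting $(d-1)\times d$ mixed Hessian has a vanishing row and rank at most $d-2$, so H\"ormander's theorem cannot be applied in the way you describe, and there is no $\la$-decay. This is not one of the ``lower-dimensional loci'' you flag as a technicality: the degeneracy holds on the \emph{whole} tube, since $\tilde x$, $\sigma\tilde y$, and the connecting geodesic all lie along $\tilde\gamma$ for stabilizer $\sigma$. The paper's key structural idea, which your proposal is missing, is to split $\Gamma=\mathrm{Stab}(\tilde\gamma)\,\cup\,(\Gamma\setminus\mathrm{Stab}(\tilde\gamma))$; the stabilizer piece is estimated by a direct kernel bound and the Schur test (using $|a_\pm(\la,T;r)|\lesssim r^{-(d-1)/2}$ and summing over the translates $\alpha_\ell$ to get $O(T^{-1/2})$ \emph{uniformly in $\la$}, with no $\la$-decay available or needed), while the non-stabilizer piece uses the Cartan--Hadamard uniqueness of geodesics (the paper's Lemma~\ref{lemmag}) to guarantee the transversal intersection picture that makes H\"ormander's theorem applicable slice-by-slice, producing the $C_T\la^{-1/4}$ gain. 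Without this split, your $\sigma\neq e$ bound fails.

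A secondary concern: for the $\sigma=e$ term your proposal invokes a ``Knapp/square-function fact'' --- that any function with Fourier support in $\{|\,|\xi|-\la\,|\lesssim T^{-1}\}$ puts at most a fraction $\sim T^{-1}$ of its $L^2$ mass in a $\la^{-1/2}$-tube of unit length. This is the right heuristic and is saturated by the dual Knapp example, but it is not a standard citable lemma and a rigorous proof is not immediate (a naive cap decomposition runs into delicate local-orthogonality issues because the tube's Fourier-side blur along the tube axis exceeds the annulus thickness). The paper instead obtains the $O(T^{-1})$ contribution of the identity term cleanly from the kernel bound in Lemma~\ref{lemmap} together with the Schur test over the tube (plus a trivial $O(T^{-1})$ operator-norm bound for the short-time piece $R_\la$), and you would likely need to reproduce something like that argument to make your Knapp step rigorous.
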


In \cite{sogge88} the first author showed that $\|e_\la\|_{L^q(M)}=O(\la^{\frac{d-1}2(\frac12-\frac1p)})$ when $2\le p \le\frac{2(d+1)}{d-1}$,
and that these estimates are sharp on the standard sphere $S^d$ because of the highest weight spherical harmonics.  We should point out that
for the complementary range $p>\frac{2(d+1)}{d-1}$ improved $L^p$-estimates under the above curvature assumptions follow, by interpolation
from the aforementioned $p=\frac{2(d+1)}{d-1}$ and an improved $L^\infty$-estimate which is implicit in B\'erard~\cite{Berard} (see also the second author and Zelditch \cite{soggezelduke} and \cite{soggehang}). Hassell and Tacy~\cite{HT} have recently obtained further results for  this range exponents.
Improvements for $p>\frac{2(d+1)}{d-1}$ are a bit more straightforward than \eqref{i.1.1} due to the fact that everything follows from pointwise estimates,
while, to obtain \eqref{i.1} and consequently \eqref{i.1.1}, we have to use oscillatory integrals and a finer analysis involving the deck transforms of the universal cover.  We should point out that there are no general $L^p$-improvements for the endpoint $p=\frac{2(d+1)}{d-1}$ of the results in
\cite{sogge88}, which on the sphere are saturated by eigenfunctions concentrating at points as well as ones concentrating along geodesics.

As noted before, the special case of $d=2$ of Theorem~\ref{negthm} is in \cite{SZ4}.  When $d=3$, if one assumes {\em constant} nonpositive curvature, \eqref{i.1}
follows from recent work of Chen and the second author~\cite{ChenS}, who showed that if $ds$ denotes arc length measure on $\gamma$, then
\begin{equation}\label{i.2}
\sup_{\gamma\in \varPi}\int_\gamma |e_\la|^2 ds =o(\la) \quad \text{as } \, \la \to \infty.
\end{equation}

In dimensions $d\ge 4$, Burq, G\'erard and Tzvetkov~\cite{bgtrestr}  showed that one has the following bounds for geodesic restrictions
\begin{equation}\label{i.3}
\int_\gamma |e_\la|^2 \, ds = O(\la^{d-2}).
\end{equation}
Improving this to $o(\la^{d-2})$ bounds as in \eqref{i.2} for $d=3$ is not strong enough to obtain \eqref{i.1} when $d\ge 4$.  This comes as no surprise
since, in these dimensions, \eqref{i.3} is saturated on the round sphere $S^d$ not by the highest weight spherical harmonics which concentrate along
geodesics, but rather zonal spherical harmonics, which concentrate at points.  By our main result, Theorem~\ref{thm:mainthm}, we know that \eqref{i.1}
is relevant for measuring the size of $L^p$-norms in the range $2<p<\frac{2(d+1)}{d-1}$, which are saturated on $S^d$ by highest weight
spherical harmonics.  These eigenfunctions saturate the Kakeya-Nikodym averages in \eqref{i.1}, by which we mean that the left side of \eqref{i.1}
is $\Omega(1)$, but they do not saturate the restriction estimates \eqref{i.3} for $d\ge 4$.

Fortunately, we can adopt the proof of the aforementioned improvement \eqref{i.2} of Chen and the second author~\cite{ChenS} to obtain \eqref{i.1}
in all dimensions under the assumption of nonpositive curvature.  Additionally, even for $d=3$, unlike the stronger estimate \eqref{i.2}, our techniques
do not require that we assume constant sectional curvature.

\medskip

Let us conclude this section by recording some applications of Theorems \ref{thm:mainthm} and \ref{negthm}.  First, using \eqref{i.1} we can improve the
lower bounds for $L^1$-norms of the first author and Zelditch~\cite{soggezelnod} under the above assumptions:

\begin{corollary}\label{L1}  Let $(M,g)$ be a $d$-dimensional compact boundaryless manifold with $d\ge2$.  Then
\begin{equation}\label{i.5}
\liminf_{\la\to \infty}\la^{\frac{d-1}4}\|e_\la\|_{L^1(M)}=\infty.
\end{equation}
\end{corollary}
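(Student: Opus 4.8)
The plan is to deduce the lower bound \eqref{i.5} from the Kakeya--Nikodym decay \eqref{i.1} together with the universal $L^q$ bounds of \cite{sogge88} at the critical exponent $q=\frac{2(d+1)}{d-1}$. The mechanism is the standard three-term H\"older interpolation trick: write $\|e_\la\|_{L^2}$, which equals $1$ after normalization, in terms of $\|e_\la\|_{L^1}$ and $\|e_\la\|_{L^q}$. Concretely, for any $q>2$ there is $\theta\in(0,1)$ with $\frac12 = \theta + (1-\theta)\frac1q$, and then
\begin{equation*}
1 = \|e_\la\|_{L^2(M)} \le \|e_\la\|_{L^1(M)}^{\theta}\,\|e_\la\|_{L^q(M)}^{1-\theta}.
\end{equation*}
So a lower bound on $\|e_\la\|_{L^1}$ follows from an \emph{upper} bound on $\|e_\la\|_{L^q}$, and the point of invoking nonpositive curvature is precisely that \eqref{i.1.1} gives such an upper bound that is $o(\la^{\frac{d-1}2(\frac12-\frac1q)})$, strictly better than the universal exponent.

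First I would fix, say, $q=\frac{2(d+1)}{d-1}$ — or indeed any exponent in the range $2<q<\frac{2(d+1)}{d-1}$ where \eqref{i.1.1} applies — and record the resulting relation $\|e_\la\|_{L^1(M)} \ge \|e_\la\|_{L^q(M)}^{-\frac{1-\theta}{\theta}}$ for $L^2$-normalized $e_\la$. Then I would substitute the curvature-improved bound \eqref{i.1.1}: since $\la^{-\frac{d-1}2(\frac12-\frac1q)}\|e_\la\|_{L^q(M)}\to 0$, we get $\|e_\la\|_{L^q(M)} = o\bigl(\la^{\frac{d-1}2(\frac12-\frac1q)}\bigr)$, hence
\begin{equation*}
\|e_\la\|_{L^1(M)} \;\gtrsim\; \bigl(\la^{\frac{d-1}2(\frac12-\frac1q)}\bigr)^{-\frac{1-\theta}{\theta}} \cdot o(1)^{-\frac{1-\theta}{\theta}},
\end{equation*}
and a short computation of the exponents — using $\frac{1-\theta}{\theta} = \frac{q(1-\frac12)}{\frac12 q - 1} \cdot$ (the precise value depending on $q$) — should reveal that the power of $\la$ produced is exactly $-\frac{d-1}4$ when one optimizes, i.e. one chooses $q$ so that $\frac{d-1}2(\frac12-\frac1q)\cdot\frac{1-\theta}{\theta} = \frac{d-1}4$. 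The $o(1)^{-\frac{1-\theta}{\theta}}\to\infty$ factor then forces $\la^{\frac{d-1}4}\|e_\la\|_{L^1(M)}\to\infty$, which is \eqref{i.5}.

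The only genuine subtlety is the bookkeeping of exponents: one must check that there is a choice of $q$ in the admissible range $2<q<\frac{2(d+1)}{d-1}$ for which the arithmetic closes, i.e. for which $\frac{d-1}2\bigl(\frac12-\frac1q\bigr)\cdot\frac{1-\theta}{\theta} = \frac{d-1}4$ with $\theta$ determined by $\frac12=\theta+(1-\theta)\frac1q$. Solving $\frac12=\theta+(1-\theta)/q$ gives $\theta = \frac{1/2-1/q}{1-1/q} = \frac{q-2}{2(q-1)}$, so $\frac{1-\theta}{\theta} = \frac{q}{q-2}$, and the condition becomes $\frac{d-1}2\cdot\frac{q-2}{2q}\cdot\frac{q}{q-2} = \frac{d-1}4$, which holds identically — so in fact \emph{every} $q$ in the range works and no optimization is needed. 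I expect the main (very mild) obstacle is simply presenting this cleanly and noting that the conclusion is uniform in the choice of $q$; there is no analytic difficulty here beyond citing \eqref{i.1.1} and H\"older's inequality. For $d=2$ the same argument applies verbatim using the two-dimensional case of Theorem~\ref{negthm}.
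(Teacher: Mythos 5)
Your argument is exactly the paper's: the same three-term H\"older interpolation $1=\|e_\la\|_{L^2}\le \|e_\la\|_{L^1}^{\frac{p-2}{2(p-1)}}\|e_\la\|_{L^p}^{\frac{p}{2(p-1)}}$, the same algebra showing $\bigl(\la^{-\frac{d-1}{2}(\frac12-\frac1p)}\|e_\la\|_{L^p}\bigr)^{-\frac{p}{p-2}}\le \la^{\frac{d-1}{4}}\|e_\la\|_{L^1}$ for any $2<p<\frac{2(d+1)}{d-1}$, and the same appeal to \eqref{i.1.1}. Correct and essentially identical to the paper's proof.
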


As pointed out in \cite{soggezelnod}, no such improvement is possible for the sphere.

The proof of \eqref{i.5} is very simple.  For, by H\"older's inequality, if $p>2$,
$$1=\|e_\la\|_{L^2}\le \|e_\la\|_{L^1}^{\frac{p-2}{2(p-1)}}\, \|e_\la\|_{L^p}^{\frac{p}{2(p-1)}},$$
whence
$$\|e_\la\|_{L^p}^{-\frac{p}{p-2}}\le \|e_\la\|_{L^1}, \quad p>2.$$
As a result,
$$\bigl(\la^{-\frac{d-1}2(\frac12-\frac1p)}\|e_\la\|_{L^p}\bigl)^{-\frac{p}{p-2}}\le \la^{\frac{d-1}4}\|e_\la\|_{L^1},$$
meaning that \eqref{i.1.1} implies \eqref{i.5}.

Let us now see how \eqref{i.5}, along with an estimate of Hezari and the second author~\cite{HS} improves the known lower bounds for the
Hausdorff measure of eigenfunctions on manifolds of variable nonpositive curvature.

To this end, for a given real eigenfunction, $e_\la$, we let
$$Z_\la=\{x\in M: \, e_\la(x)=0\}$$
denote its nodal set and ${\mathcal H}^{d-1}(Z_\la)$ its $(d-1)$-dimensional Hausdorff measure.  Yau~\cite{Yau} conjectured that
${\mathcal H}^{d-1}(Z_\la)\approx \la$.  This was verified by Donnelly and Fefferman~\cite{DF} in the real analytic case and so, in particular,
if $(M,g)$ has constant sectional curvature.  The lower bound ${\mathcal H}^{d-1}(Z_\la)\ge c\la$ was verified in the $C^\infty$ case when
$d=2$ by Br\"uning~\cite{bruning} and Yau, but much less is known in this case.  An upper bound ${\mathcal H}^{d-1}(Z_\la)=O(\la^{\frac32})$ is
also known by Dong~\cite{Dong} and Donnelly and Fefferman~\cite{DF2} when $d=2$, but the best known upper bounds for $d\ge3$ are
${\mathcal H}^{d-1}(Z_\la)=O((c\la)^{(c\la)})$, which are due to Hardt and Simon~\cite{Hardt}.

Until recently, in higher dimensions for the $C^\infty$ case, the best known lower bounds for ${\mathcal H}^{d-1}(Z_\la)$ were also of an exponential
nature (see \cite{HL}).  Recently, Colding and Minicozzi~\cite{CM} and the second author and Zelditch~\cite{soggezelnod} proved lower bounds
of a polynomial nature.  Specifically, the best known lower bounds for $d\ge3$ in the $C^\infty$ case are those of Colding and Minicozzi~\cite{CM} who showed that
\begin{equation}\label{i.6}
c\la^{1-\frac{d-1}2}\le {\mathcal H}^{d-1}(Z_\la).
\end{equation}
Subsequent proofs of this using the original approach of the second author and Zeldtich~\cite{soggezelnod} were obtained by Hezari and the second author~\cite{HS} and the second author and Zelditch~\cite{soggezelnod2}.  The latter works and the earlier one \cite{soggezelnod} were based on a
variation of an identity of Dong~\cite{Dong}.

The proof of \eqref{i.6} in \cite{HS} was based on the following lower bound
\begin{equation}\label{i.7}
c\la \, \Bigl(\int_M|e_\la|\, dx\Bigr)^2\le {\mathcal H}^{d-1}(Z_\la).
\end{equation}
Indeed, simply combining \eqref{i.7} and the $L^1$-lower bound of the second author and Zelditch~\cite{soggezelnod}
\begin{equation}\label{i.8}
c\la^{-\frac{d-1}4}\le \|e_\la\|_{L^1}
\end{equation}
yields \eqref{i.6}.

Similarly, by using the improvement \eqref{i.5} of \eqref{i.8}, we can improve\footnote{An alternate approach, which yields the same sort of results, would
be to use \eqref{i.1.1} to improve the conclusion of  \cite[Lemma 4]{CM} under the assumption of nonpositive curvature.}  the known lower bounds \eqref{i.6} under our assumptions:

\begin{corollary}\label{nodalcorr}  Let $(M,g)$ be a compact boundaryless Riemannian manifold of dimension $d\ge3$ with nonpositive
sectional curvatures.  Then
\begin{equation}\label{i.9}
\liminf_{\la\to \infty}\la^{-1+\frac{d-1}2}{\mathcal H}^{d-1}(Z_\la)=\infty.
\end{equation}
In particular, when $d=3$, ${\mathcal H}^{2}(Z_\la)$ becomes arbitrarily large as $\la\to \infty$.
\end{corollary}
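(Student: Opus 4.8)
The plan is to combine the Hezari--Sogge a priori bound \eqref{i.7} with the improved $L^1$ lower bound \eqref{i.5} of Corollary~\ref{L1}; given those two inputs the statement is essentially immediate. First I would normalize $e_\la$ so that $\|e_\la\|_{L^2(M)}=1$, which is the normalization under which both \eqref{i.7} and \eqref{i.5} are stated: the quantity $\mathcal{H}^{d-1}(Z_\la)$ is invariant under scaling $e_\la\mapsto t e_\la$ while $\bigl(\int_M|e_\la|\,dx\bigr)^2$ is homogeneous of degree $2$, so the normalization is forced in \eqref{i.7}, and the same normalization is in force in \eqref{i.5}. By Corollary~\ref{L1}, for each $N<\infty$ there is $\Lambda_N<\infty$ with $\int_M|e_\la|\,dx\ge N\la^{-\frac{d-1}4}$ whenever $\la\ge\Lambda_N$.

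Squaring this and inserting it into \eqref{i.7} gives
\[
\mathcal{H}^{d-1}(Z_\la)\ \ge\ c\la\Bigl(\int_M|e_\la|\,dx\Bigr)^2\ \ge\ cN^2\,\la\cdot\la^{-\frac{d-1}2}\ =\ cN^2\,\la^{\,1-\frac{d-1}2},\qquad \la\ge\Lambda_N.
\]
Hence $\la^{-1+\frac{d-1}2}\,\mathcal{H}^{d-1}(Z_\la)\ge cN^2$ for all sufficiently large $\la$, and since $N$ was arbitrary this is exactly \eqref{i.9}. The final assertion is the case $d=3$, where $1-\frac{d-1}2=0$ and the displayed inequality reads $\mathcal{H}^2(Z_\la)\ge cN^2$, which tends to $\infty$ with $N$.

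There is no real obstacle remaining at this stage: all of the analytic difficulty has already been absorbed into Theorem~\ref{negthm} (hence the Kakeya--Nikodym decay \eqref{i.1}) and its consequences \eqref{i.1.1} and \eqref{i.5}, together with the geometric bound \eqref{i.7}. The only genuine thing to verify is the matching of normalizations noted above. As the footnote indicates, one could instead feed \eqref{i.1.1} directly into the argument behind \cite[Lemma 4]{CM} to reach the same conclusion under the nonpositive curvature hypothesis, but routing through \eqref{i.7} and \eqref{i.5} is the most economical path.
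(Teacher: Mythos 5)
Your proposal is correct and follows exactly the paper's route: combine the Hezari--Sogge inequality \eqref{i.7} with the improved $L^1$ lower bound \eqref{i.5} of Corollary~\ref{L1}, with the $L^2$ normalization observed. The paper does not spell out the elementary algebra, but it is precisely what you wrote.
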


By a simple argument (see \cite{soggekaknik}) one always has \eqref{kakniknonsat}
and consequently \eqref{lpnonsat} as $\la$ ranges over a subsequence of eigenvalues
$\{\la_{j_k}\}$ if the resulting eigenfunctions form a quantum ergodic system (i.e. $|e_{\la_{j_l}}|^2dx$ converges in the weak${}^*$ topology to
the uniform probability measure $dx/\text{Vol}_g(M)$).  Consequently, by the above proof, we also have the following

\begin{corollary}\label{qenod}  Let $\{e_{\la_{j_k}}\}$ be a quantum ergodic system on a compact Riemannian manifold of dimension
$d\ge 3$.  We then have
\begin{equation}\label{i.10}
\lim_{k\to\infty} \la_{j_k}^{-1+\frac{d-1}2}{\mathcal H}^{d-1}(Z_{\la_{j_k}})=\infty.
\end{equation}
In particular, if the geodesic flow is ergodic, we have \eqref{i.10} as $\{\la_{j_k}\}$ ranges over a subsequence of eigenvalues of density one.
\end{corollary}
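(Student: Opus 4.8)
The plan is to route Corollary~\ref{qenod} through the same chain of implications that proves Corollary~\ref{nodalcorr}, substituting quantum ergodicity for the nonpositive curvature hypothesis; because quantum ergodicity is a statement about the entire subsequence $\{\la_{j_k}\}$, the $\liminf$ there will become a genuine limit.

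The first point is that a quantum ergodic system automatically satisfies \eqref{kakniknonsat}, which is the ``simple argument'' recalled just above from \cite{soggekaknik}. I would argue as follows: since $|e_{\la_{j_k}}|^2\,dx\to dx/\text{Vol}_g(M)$ weak${}^*$, for each fixed $\delta>0$ and each $\gamma\in\varPi$ one has $\mathcal{T}_{\la_{j_k}^{-1/2}}(\gamma)\subset \{x\in M:d_g(x,\gamma)\le\delta\}=:\mathcal{T}_\delta(\gamma)$ once $\la_{j_k}>\delta^{-2}$, hence
$$\limsup_{k\to\infty}\int_{\mathcal{T}_{\la_{j_k}^{-1/2}}(\gamma)}|e_{\la_{j_k}}|^2\,dx\;\le\;\limsup_{k\to\infty}\int_{\mathcal{T}_\delta(\gamma)}|e_{\la_{j_k}}|^2\,dx\;=\;\frac{\text{Vol}_g(\mathcal{T}_\delta(\gamma))}{\text{Vol}_g(M)}\;\lesssim\;\delta^{\,d-1}.$$
To obtain a bound uniform in $\gamma$, I would use that $\varPi$ is compact (being a continuous image of the unit tangent bundle): cover it by finitely many fixed tubes so that every $\gamma\in\varPi$ satisfies $\gamma\subset\mathcal{T}_{\delta/2}(\gamma_i)$ for some $i$, note that then $\mathcal{T}_{\la_{j_k}^{-1/2}}(\gamma)\subset\mathcal{T}_\delta(\gamma_i)$ for all large $\la_{j_k}$, apply the display to each of the finitely many $\gamma_i$, and send $\delta\to0$. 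This yields \eqref{kakniknonsat} along $\{\la_{j_k}\}$.

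With \eqref{kakniknonsat} in hand, Corollary~\ref{thm:maincorollary} gives \eqref{lpnonsat} along $\{\la_{j_k}\}$ for every $2<p<\frac{2(d+1)}{d-1}$. Running the H\"older argument that derives \eqref{i.5} from \eqref{i.1.1} — but now obtaining a limit, since \eqref{lpnonsat} is a statement about the whole subsequence — yields $\la_{j_k}^{\frac{d-1}4}\|e_{\la_{j_k}}\|_{L^1(M)}\to\infty$. Substituting this into the nodal lower bound \eqref{i.7} of Hezari and the second author and rearranging,
$$\la_{j_k}^{-1+\frac{d-1}2}\,{\mathcal H}^{d-1}(Z_{\la_{j_k}})\;\ge\; c\,\la_{j_k}^{\frac{d-1}2}\|e_{\la_{j_k}}\|_{L^1(M)}^2\;=\;c\,\bigl(\la_{j_k}^{\frac{d-1}4}\|e_{\la_{j_k}}\|_{L^1(M)}\bigr)^2\;\longrightarrow\;\infty,$$
which is \eqref{i.10}. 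For the last assertion I would then invoke the quantum ergodicity theorem (Shnirelman, Zelditch, Colin de Verdi\`ere): if the geodesic flow on $(M,g)$ is ergodic, there is a density-one subsequence of eigenvalues along which the $L^2$-normalized eigenfunctions form a quantum ergodic system, and the preceding applies to it.

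I expect the only step requiring any care is the passage to uniformity in $\gamma$ in the second paragraph; the remainder is a straightforward assembly of Corollary~\ref{thm:maincorollary}, the proof of \eqref{i.5}, and the inequality \eqref{i.7}. (Alternatively, as in the footnote above, one could feed \eqref{lpnonsat} directly into \cite[Lemma 4]{CM}; the route through \eqref{i.5} and \eqref{i.7} is simply the shortest.)
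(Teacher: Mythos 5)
Your proposal is correct and follows exactly the paper's intended route: quantum ergodicity gives \eqref{kakniknonsat}, which via Corollary~\ref{thm:maincorollary} gives \eqref{lpnonsat}, and then the H\"older chain proving \eqref{i.5} combined with \eqref{i.7} yields \eqref{i.10}, with the final assertion coming from the Shnirelman/Zelditch/Colin de Verdi\`ere theorem. Your second paragraph just supplies the details of the ``simple argument'' that the paper defers to \cite{soggekaknik} (weak${}^*$ convergence plus compactness of $\varPi$), so the approaches are the same.
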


The last part of the corollary follows from the quantum ergodic theorem of Snirelman~\cite{Snirelman} / Zelditch~\cite{ZelditchQUE} / Colin de Verdi\`ere~\cite{Colin} (see also \cite{soggehang}).

\medskip

This paper is organized as follows.  In the next three sections we shall present the proof of our main result, Theorem~\ref{thm:mainthm}.  In \S 2 we shall go through the essentially routine step of reducing matters to proving certain bilinear estimates, and this step is very similar to the argument for the two-dimensional case of one of us \cite{soggekaknik}.  It gives partial control of the left side of \eqref{efcninequality} by the last term in the right.
The needed bilinear estimates, which lead to the first term in the right side of \eqref{efcninequality} are then presented in \S 3 and \S 4.  In \S 3 we show the bilinear estimate we require follows, up to an $\e$ loss, from one  of Lee~\cite[Theorem 1.1]{leebilinear}.  We then are able to remove this loss in \S 4 using a variable coefficient version of the ``$\e$-removal lemma" of Tao and Vargas~\cite[Lemma 2.4]{tv1} (see also Bourgain~\cite{bourgaincone}).  Then, in the final section, \S 5, we prove Theorem~\ref{negthm} which says that we have
 $o(1)$ bounds for $L^2$-norms over shrinking tubes under the assumption of nonpositive curvature, and consequently, by Theorem~\ref{thm:mainthm}, improved $L^p(M)$-norms for $2<p<\frac{2(d+1)}{d-1}$ of the estimates in \cite{sogge88}.

\section{Reduction to oscillatory integral estimates}
In this section, we begin the proof of Theorem \ref{thm:mainthm}, reducing matters to estimates on oscillatory integral operators.  Let $\chi_\l$ denote the operator $\chi(\sqrt{\Delta_g} - \l)$, where $\chi$ is a smooth bump function with $\chi(0)=1$ and sufficiently small compact support.  Hence $\chi_\l e_\l = e_\l$.  Recall (see Sogge, Chapter 5 \cite{soggefica}) that the kernel of this operator can be written as
$$
\chi_\l f(z) = \chi(\sqrt{\Delta_g} - \l)f(z) = \l^{\frac{d-1}{2}}\int_M e^{i\l d_g(z,y)}\alpha_\l(z,y) f(y)\,dy + R_\l f(z)
$$
where $\alpha_\l(z,y)$ is supported in $\delta \leq  d_g(z,y) \leq 2 \delta$ for some $\delta >0$ sufficiently small and less than half the injectivity radius of $(M,g)$. Moreover, $\|R_\l f\|_{L^q(M)} \lesssim \|f\|_{L^2(M)}$.

Using a sufficiently fine partition of unity, we may assume that the support of $\alpha_\l$ is sufficiently small.  In particular, we may assume that $\supp(\alpha_\l) \subset \{ |z-z_0| + |y-y_0|\ll \veps_0\}$ for some points $z_0,y_0 \in M$ with $|z_0-y_0| \approx \delta$.  Let $\gamma_0$ denote the geodesic connecting $z_0$, $y_0$ and suppose that $\Sigma$ is a suitable codimension 1 submanifold passing through $y_0$ such that $\gamma_0$ is orthogonal to $\Sigma$.   Now let $(t,s) \in \RR^{d-1}\times \RR $ denote Fermi coordinates for $\Sigma$ with $(0,0) = y_0$, $(0,s)$ parameterizing $\gamma_0$, and $(t,0)$ parameterizing $\Sigma$.  This means that for any fixed $t_0$, $(t_0,s)$ locally parameterizes the geodesic passing through $(t_0,s)$ orthogonal to $\Sigma$.

It suffices to prove that
\begin{multline*}
\int\left(\int \left|\l^{\frac{d-1}{2}}\int e^{i\l d_g(z,y)}\alpha_\l(z,(t,s)) f(t,s)\,dt\right|^2 |f(z)|^{q-2}\,dz\right)\,ds\\
\leq \veps \left(\l^{\frac{d-1}{2}(\frac 12-\frac 1q)}\|f\|_{L^2(M)} \right)^2\|f\|_{L^q(M)}^{q-2}\\ + C_\veps \l^{q(\frac{d-1}{2})(\frac 12-\frac 1q)}\|f\|^2_{L^2(M)}
\sup_{\gamma \in \varPi} \left(\int_{\mathcal{T}_{\l^{-1/2}}(\gamma)} |f(z)|^2\,dz\right)^{\frac{q-2}{2}}
\end{multline*}
Indeed, using Young's inequality for products applied to the H\"older conjugates $\frac q2$, $\frac{q}{q-2}$, we may absorb the contribution of $\veps^{(q-2)/q}\|f\|_{L^q(M)}^{q-2}$ from the first term into the left hand side, for $\veps$ sufficiently small, yielding \eqref{thm:mainthm} when $f=e_\l$.  Moreover, it suffices to prove that for each $s$ the expression in parentheses on the left hand side is bounded by the right hand side.  For convenience, we will show this for $s=0$ as the argument below works for any value of $s$ and does not use the structure of $\Sigma$ once Fermi coordinates are given.

Fix $\l$ and let $Th(z)=\int e^{i\l\psi(z,t)}\alpha_\lambda(z,(t,0)) h(t)\,dt$ where $\psi(z,t) = d_g(z,(t,0))$.  We will show that
\begin{multline}\label{hinequality}
\int |\l^{\frac{d-1}{2}}Th(z)|^2 |f(z)|^{q-2}\,dz
\leq \veps \left(\l^{\frac{d-1}{2}(\frac 12-\frac 1q)}\|h\|_{L^2_t} \right)^2\|f\|_{L^q_z}^{q-2}\\
 + C_\veps \l^{\frac{d-1}{2}}\|h\|^2_{L^2_t}
\sup_{\gamma \in \varPi} \int_{\mathcal{T}_{\l^{-1/2}}(\gamma)} |f(z)|^{q-2}\,dz.
\end{multline}
H\"older's inequality with conjugates $\frac{2}{q-2}$, $\frac{2}{4-q}$ will then imply that
$$
\l^{\frac{d-1}{2}}\int_{\mathcal{T}_{\l^{-1/2}}(\gamma)} |f(z)|^{q-2}\,dz \lesssim \l^{\frac{d-1}{2}-(\frac{d-1}{2})(2-\frac q2)}\left(\int_{\mathcal{T}_{\l^{-1/2}}(\gamma)} |f(z)|^{2}\,dz \right)^{\frac{q-2}{2}}
$$
and it is verified that the exponent of $\l$ on the right is the same as the one in \eqref{efcninequality}.

Observe that
$$
\left(Th(z)\right)^2 = \int e^{i\l(\psi(z,t)+\psi(z,t'))} \alpha_\l(z,t)\alpha_\l(z,t') h(t)h(t')\,dt\,dt'.
$$
Suppose $\veps_0$ is a small dyadic number such that $\supp(\alpha_\l(z,\cdot))\subset [-\veps_0, \veps_0]^d$ for all $z$.  Let $N>0$ be a sufficiently large dyadic number (which will essentially play the same role as the integer $N$ in \cite[(2.5)]{soggekaknik}) and let $j_0$ be the largest integer such that $2^{-j_0} \geq \l^{-\frac 12}$.  Take a Whitney-type decomposition of $[-\veps_0, \veps_0]^d \times [-\veps_0, \veps_0]^d$ away from its diagonal $D$ into almost disjoint cubes
\begin{multline*}
[-\veps_0, \veps_0]^d \times [-\veps_0, \veps_0]^d \setminus D\\
= \left( \bigcup_{\veps_0 \geq 2^j > N2^{-j_0}} \bigcup_{d(Q^j_\nu, Q^j_{\nu'}) \approx 2^{-j}} Q^j_\nu \times Q^j_{\nu'}\right)\cup \left( \bigcup_{d(Q^{j_0}_\nu, Q^{j_0}_{\nu'}) \leq  N2^{-j_0}} Q^{j_0}_\nu \times Q^{j_0}_{\nu'}\right)
\end{multline*}
where each $Q^j_\nu$ has sidelength $2^{-j}$ and is centered at a point $\nu \in 2^{-j}\mathbb{Z}^{d-1}$.  Set $h^j_\nu(t) = \mathbf{1}_{Q^j_\nu}(t)h(t)$ where the first factor denotes the indicator of the cube $Q^j_\nu$.  Hence
\begin{equation}\label{Tdecomposition}
\left(Th(z)\right)^2 = \sum_{\veps_0 \geq 2^{-j} > N2^{-j_0} } \sum_{(\nu,\nu') \in \Xi_j} Th^j_\nu(z)\,Th^j_{\nu'}(z) + \sum_{(\nu,\nu') \in \Xi_{j_0}} Th^j_\nu(z)\,Th^j_{\nu'}(z)
\end{equation}
where $\Xi_j$ denotes the collection of $(\nu,\nu')$ indexing the cubes satisfying $d(Q^j_\nu, Q^j_{\nu'}) \approx 2^{-j}$ (or $\leq N2^{-j_0}$ when $j=j_0$).

\begin{theorem}\label{thm:bilinear}
Suppose $T=T_\l$ is the oscillatory integral operator defined by
$$
T h(z) := \int e^{i\l \phi(z,s,t)} a_\l(z,s,t) h(t)\,dt
$$
where $a_\l$ is smooth and $\supp(a_\l)$ is contained in a sufficiently small uniform compact set and whose derivative bounds can be taken uniform in $\l$.  Assume further that $\phi(x,s,t)$ satisfies a Carleson-Sj\"olin type condition that $\nabla^2_{xt}\phi$ is invertible and that if $\theta$ is a unit vector for which $\nabla_{t} \langle \nabla_{(x,s)} \phi, \theta \rangle =0$, then
\begin{equation}\label{CScond}
\nabla^2_{tt} \langle \nabla_{(x,s)} \phi, \theta \rangle \text{ has eigenvalues of the same sign}.
\end{equation}
Then
\begin{equation}\label{bilinearjbound}
\left\| \sum_{(\nu,\nu') \in \Xi_j} Th^j_\nu \;Th^j_{\nu'} \right\|_{L^{q/2}_x} \lesssim 2^{j(\frac{2(d+1)}{q}-(d-1))}\l^{-\frac{2d}{q}}\|h\|_{L^2_t}^2.
\end{equation}
\end{theorem}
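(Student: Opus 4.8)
The plan is to establish \eqref{bilinearjbound}, for $q$ in the range $\tfrac{2(d+2)}{d}<q<\tfrac{2(d+1)}{d-1}$ of Theorem~\ref{thm:mainthm}, by rescaling each Whitney pair to a configuration whose supports are $\approx 1$-separated, applying Lee's bilinear theorem to the rescaled operator (which yields \eqref{bilinearjbound} up to an $\e$-loss), and then removing that loss with a variable-coefficient version of the Tao--Vargas $\e$-removal lemma. \textbf{Step 1 (parabolic rescaling).} Fix $j$ and a pair $(\nu,\nu')\in\Xi_j$, so $Q^j_\nu,Q^j_{\nu'}$ have side $2^{-j}$ and are at distance $\approx 2^{-j}$; let $\nu_*$ be a point comparable to both centers. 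I would set $\tilde t=2^j(t-\nu_*)$ and make an affine, anisotropic (factor $\approx 2^j$) change of the $z=(x,s)$ variables---using invertibility of $\nabla^2_{xt}\phi$ to put $\phi$ in Carleson--Sj\"olin normal form---so that, with $\mu:=\la 2^{-2j}$,
$$
\la\phi(z,s,t)=\mu\,\tilde\phi(\tilde z,\tilde t)+m_\nu(z)+n_\nu(\tilde t),
$$
where $m_\nu,n_\nu$ are real, $\tilde\phi$ again satisfies \eqref{CScond} and has invertible mixed Hessian with uniform constants, the rescaled amplitude $\tilde a_\mu$ has uniform derivative bounds, the $\tilde t$-supports are two unit cubes at distance $\approx 1$, and the $\tilde z$-support lies in a ball $B_R$ with $R\approx 2^j$. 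Since $2^{-j}>N2^{-j_0}\gtrsim N\la^{-1/2}$, we have $\mu\gtrsim N^2\gg 1$. Tracking the Jacobians $dt=2^{-j(d-1)}d\tilde t$ (one per factor), $dz=2^{-j(d-1)}d\tilde z$, and $\|\tilde h_i\|_{L^2}=2^{j(d-1)/2}\|\ind_{Q^j_\nu}h\|_{L^2}$, the unimodular factors $e^{im_\nu},e^{in_\nu}$ dropping out, reduces everything to the \emph{single-pair} bound
$$
\bigl\|\tilde T_\mu\tilde h_1\;\tilde T_\mu\tilde h_2\bigr\|_{L^{q/2}(B_R)}\lesssim\mu^{-\frac{2d}{q}}\|\tilde h_1\|_{L^2}\|\tilde h_2\|_{L^2},\qquad R\approx 2^j;
$$
a short computation with the exponents then reproduces precisely the powers $2^{j(\frac{2(d+1)}{q}-(d-1))}\la^{-\frac{2d}{q}}$ of \eqref{bilinearjbound}.

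\textbf{Step 2 (Lee's theorem, with an $\e$-loss).} The displayed single-pair estimate over $B_R$, up to a loss $R^{\e}=2^{j\e}$, is exactly Lee's bilinear oscillatory integral estimate \cite[Theorem 1.1]{leebilinear}: its curvature hypothesis is the same-sign condition \eqref{CScond}, and $q/2>\tfrac{d+2}{d}$ places us in the admissible range. Summing the resulting single-pair bounds over $(\nu,\nu')\in\Xi_j$ is then routine---by the triangle inequality in $L^{q/2}$ (legitimate since $q/2\ge 1$), Cauchy--Schwarz, and the bounded overlap of the cubes $Q^j_\nu$ one has $\sum_{(\nu,\nu')\in\Xi_j}\|\ind_{Q^j_\nu}h\|_{L^2}\|\ind_{Q^j_{\nu'}}h\|_{L^2}\lesssim\|h\|_{L^2}^2$---so this step yields \eqref{bilinearjbound} with the superfluous factor $2^{j\e}$ on its right-hand side.

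\textbf{Step 3 ($\e$-removal), and the main obstacle.} It remains to delete the $2^{j\e}$. Since $q$ is \emph{strictly} inside $(\tfrac{2(d+2)}{d},\tfrac{2(d+1)}{d-1})$ there is room for an $\e$-removal argument, and I would adapt the lemma of Tao and Vargas \cite[Lemma 2.4]{tv1} (see also Bourgain \cite{bourgaincone}) to the variable-coefficient setting: decompose the $\tilde z$-support into $R$-balls, pigeonhole on the dyadic level sets of $|\tilde T_\mu\tilde h_1\,\tilde T_\mu\tilde h_2|$, bound the contribution of each level by combining the local estimate of Step 2 near the endpoint with the elementary $L^2\!\to\!L^2$ bound $\|\tilde T_\mu\|_{L^2\to L^2}\lesssim\mu^{-(d-1)/2}$, and sum the geometric series in the level parameter, which converges exactly because $q/2>\tfrac{d+2}{d}$. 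The genuinely new---and hardest---point is that, the operator not being translation invariant, on each $R$-ball one must first freeze the phase (again using invertibility of $\nabla^2_{xt}\phi$) so as to compare $\tilde T_\mu$ with a constant-coefficient extension operator, and then check that this comparison, as well as the localized $L^{q/2}$ and $L^2$ bounds, hold with constants uniform in the ball and in $\la,j$; carrying the Tao--Vargas machinery through in this uniform variable-coefficient form is where I expect the real work to lie. A secondary point requiring care is the verification in Step 1 that the parabolic rescaling preserves \eqref{CScond} with uniform constants for a general Carleson--Sj\"olin phase, not merely the paraboloid model.
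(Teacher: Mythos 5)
Your general scheme---parabolic rescaling, Lee's bilinear estimate, variable-coefficient $\e$-removal---is the one the paper follows, but you have omitted the almost-orthogonality step that makes it work. After your anisotropic rescaling $\tilde t=2^j(t-\nu_*)$, $\tilde z\approx 2^j z$, the $\tilde z$-support of the amplitude has diameter $\approx 2^j$, as you yourself note. Lee's bilinear theorem \cite[Theorem~1.1]{leebilinear} requires amplitudes supported in a set of \emph{uniform} size; unlike the constant-coefficient extension operator there is no translation invariance to make the estimate automatically global in $z$. Decomposing the $\tilde z$-ball of radius $2^j$ into $O(1)$-balls and summing with the triangle inequality in $L^{q/2}$ would cost a factor $\approx 2^{2jd/q}$, which destroys the desired power of $2^j$. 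Some almost-orthogonality principle must replace that triangle inequality, and that is precisely what your sketch lacks.

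The paper's resolution is to localize $x$ \emph{before} rescaling. Lemma~\ref{thm:almorthoT} shows that cutting $x$ to a $2^{-j}$-cube $\supp A_\mu$ gives an $\ell^{q/2}$-almost-orthogonality,
$$
\bigl\|A_\mu\textstyle\sum_{\nu,\nu'}T(h^j_\nu)T(g^j_{\nu'})\bigr\|_{L^{q/2}}^{q/2}\ \lesssim\ \textstyle\sum_{\nu,\nu'}\bigl\|T_{\nu,\mu}(h^j_\nu)T_{\nu',\mu}(g^j_{\nu'})\bigr\|_{L^{q/2}}^{q/2},
$$
where $T_{\nu,\mu}$ has $x$-support in $\supp A_\mu$. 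The proof integrates by parts in $x$ against $\l\nabla_x\Phi(\mu,\nu,\nu')-D_x$, uses the invertibility of $\nabla^2_{xt}\phi$ to see that the resulting Fourier multipliers indexed by $\nu$ are essentially disjointly supported on a $2^{-j}$-lattice, and interpolates between the trivial $L^1$ bound and the $L^2$ Plancherel bound. Only \emph{after} this $x$-localization, and a companion one on the $t$-side (Lemma~\ref{thm:almorthoh}), does the rescaling $y=2^j(x-\sigma_s)$ place the amplitude in a fixed $O(1)$ set, so that Theorem~\ref{thm:leenoloss} (Lee's bound plus the $\e$-removal, with constants uniform over the family of rescaled operators indexed by $\nu,\nu',\mu$) applies as stated. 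Your Step~3 correctly flags uniformity in the variable-coefficient $\e$-removal as the delicate point, and the paper does devote \S4 to it; but without the prior localization you face a separate and harder problem---summing bilinear estimates over a $z$-domain of size $2^j$---which the paper's arrangement of the argument avoids entirely. (A smaller inaccuracy: the loss in Lee's theorem is $\l^\e$ in the oscillatory parameter, so after rescaling it is $(\l 2^{-2j})^\e$ rather than $2^{j\e}$; this does not change the structural gap above.)
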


It can be verified that setting $z=(x,s) \in \RR^{d-1} \times \RR$, the phase function in question $\phi(x,s,t) := d_g((x,s),(t,0)) = \psi((x,s),t)$ satisfies the Carleson-Sj\"olin condition given here.  Moreover, our assumption that $q < \frac{2(d+1)}{d-1}$ ensures that the exponent of $2^j$ in \eqref{bilinearjbound} is positive.  Hence this estimate yields
\begin{equation*}
\sum_{\veps_0^{-1} \leq 2^{j} < N^{-1}2^{j_0}} \left\| \sum_{(\nu,\nu') \in \Xi_j} Th^j_\nu \;Th^j_{\nu'} \right\|_{L^{q/2}_z} \lesssim N^{-(\frac{2(d+1)}{q}-(d-1))}\l^{-\frac{d-1}{q}-\frac{d-1}{2}}\|h\|_{L^2_t}^2.
\end{equation*}
Since H\"older's inequality with conjugates $\frac q2$, $\frac{q}{q-2}$, and the triangle inequality yield
$$
\l^{d-1}\int \Big|\sum_j \sum_{(\nu,\nu') \in \Xi_j} Th^j_\nu Th^j_{\nu'} \Big|^2 |f|^{q-2}dz \leq \l^{d-1}\sum_{j} \left\| \sum_{(\nu,\nu') \in \Xi_j} Th^j_\nu Th^j_{\nu'} \right\|_{L^{q/2}_z}\|f\|_{L^q_z}^{q-2}
$$
the contribution of this sum is bounded by the first term on the right hand side of \eqref{hinequality} by taking $N$ suitably large.  This estimate can be considered as analogous to \cite[(2.6)]{soggekaknik}.

Our main tool in proving \eqref{bilinearjbound} will be a bilinear estimate due to Lee \cite[Theorem 1.1]{leebilinear} along with a refinement of arguments in \S3 of that same work.  Indeed, the estimate \eqref{bilinearjbound} should be compared with \cite[Lemma 3.3 and (3.3)]{leebilinear}.  In \cite{leebilinear}, the author proves bilinear estimates which can be thought of as a variable coefficient versions of bilinear restriction estimates due to Tao \cite{taobilinear} for elliptic surfaces (inspired by prior work of Wolff \cite{wolff} and Tao-Vargas-Vega \cite{tvv}). Lee then showed that these bilinear estimates in turn implied linear estimates on oscillatory integral operators whose phase function satisfies the Carleson-Sj\"olin type condition \eqref{CScond} (more generally called the ``H\"ormander problem").  However, his estimates suffer losses when compared to the optimal estimate predicted by scaling.  In the present work, we cannot afford such losses.  Hence one of the central tasks in this work is to prove a variable coefficient version of the $\veps$-removal lemma for bilinear estimates in \cite[Lemma 2.4]{tv1} (see also Bourgain~\cite{bourgaincone}) and refine the almost orthogonality arguments in \cite[\S3]{leebilinear}.

We now turn to the second sum in \eqref{Tdecomposition}; since $2^{-j_0} \approx \l^{-\frac 12}$ it will be treated essentially the same way as in \cite[p.527-9]{soggekaknik}. Observe that
\begin{equation*}
\left| \sum_{(\nu,\nu') \in \Xi_{j_0}} Th^j_\nu(z)\,Th^j_{\nu'}(z)\right| \lesssim N^{d-1} \sum_\nu |T h^{j_0}_\nu(z)|^2
\end{equation*}
The main estimate for this term is then
\begin{equation}\label{kaknikterm}
\int |\l^{\frac{d-1}{2}} T h^{j_0}_\nu(z)|^2 |f(z)|^{q-2}\,dz \lesssim \l^{\frac{d-1}{2}}\|h^{j_0}_\nu \|_{L^2_t}^2 \sup_{\gamma \in \varPi} \int_{\mathcal{T}_{\l^{-1/2}}(\gamma)} |f(z)|^{q-2}\,dz
\end{equation}
Since $\sum_\nu \|h^{j_0}_\nu \|_{L^2_t}^2 = \|h\|_{L^2_t}^2$, we may sum in $\nu$ to see that the contribution of these terms is bounded by the last term in \eqref{hinequality}.

To see \eqref{kaknikterm}, we will use geodesic normal coordinates centered at the point on $M$ corresponding to $(\nu,0)$ in the Fermi coordinates (recall that $\nu \in 2^{-j_0} \mathbb{Z}^{d-1}$) and let $x \mapsto \kappa(x)$ denote the diffeomorphism which makes this change of coordinates.  We may assume that $\kappa(\nu,s)=(0,s)$ (parameterizing the geodesic orthogonal to $\Sigma$ through $(\nu,0)$).   We now let $\{\omega_l\}_l$ denote a $\l^{-\frac 12}$-separated collection of points in a neighborhood of $(0,\dots,0,1)$ on $\mathbb{S}^{d-1}$ indexed by a subset of $\mathbb{Z}^{d-1}$ so that
$$
|\omega_l - \omega_k| \gtrsim \l^{-\frac 12}|l-k|.
$$
Now let
$$
S_l := \left\{ z: \left|\frac{z}{|z|} -\omega_l\right|\leq \l^{-\frac 12} \right\}
$$
and observe that the left hand side of \eqref{kaknikterm} can be dominated by
\begin{multline*}
\sum_l \left\|\l^{\frac{d-1}{2}} T(h^{j_0}_\nu) \right\|^2_{L^\infty(\kappa^{-1}(S_l))} \|f\|_{L^{q-2}(\kappa^{-1}(S_l) \cap K)}^{q-2}\\
\leq \sup_k \|f\|_{L^{q-2}(\kappa^{-1}(S_k) \cap K)}^{q-2}
\sum_l \left|\l^{\frac{d-1}{2}} T(h^{j_0}_\nu)(z_l) \right|^2
\end{multline*}
where the $z_l$ are chosen to maximize $|T(h^{j_0}_\nu)(z)|$ as $z$ ranges over $\kappa^{-1}(S_l)$ and $K$ is a small set containing the $x$-support of $\alpha_\l(x,y)$.  It thus suffices to see that for some suitable bump function $\psi$,
$$
\sum_l \left| \l^{\frac{d-1}{2}} \int e^{i\l\psi(z_l,t)} \alpha_\l(z_l,(0,t))\psi(\l^{\frac 12}(t-\nu))h^{j_0}_\nu(t)\,dt \right|^2\lesssim \l^{\frac{d-1}{2}} \|h^{j_0}_\nu\|_{L^2_t}^2.
$$
After a translation in $t$, it suffices to assume that $\nu=0$ and the desired $L^2 \to \ell^2$ estimate follows from the one dual to \eqref{l2L2dual} below.

\begin{theorem}
Suppose $\psi(z,t)$ is as defined above and $\rho(z,t)$ is a smooth bump function satisfying $|\prtl_t^\alpha \rho(z,t)| \lesssim_\alpha \lambda^{\frac{|\alpha|}{2}}$ and $\supp(\rho(\cdot,z)) \subset \{|t| \lesssim \l^{-\frac 12}\}$.  Assume also that $\rho$ vanishes when $z$ is outside of a small neighborhood $\mathcal{N}$ of $(s_0,0)$ with $s_0 \approx \delta$ with $\delta >0$ (in the Fermi coordinates described above).  Let $z_l$ be a collection of points in $\mathcal{N}$ indexed by $\mathbb{Z}^{d-1}$ such that whenever $|l-k|$ is sufficiently large,
\begin{equation}\label{kappalwr}
\left| \frac{(\kappa_1(z_l),\dots, \kappa_{d-1}(z_l))}{|\kappa(z_l)|}-\frac{(\kappa_1(z_k),\dots, \kappa_{d-1}(z_k))}{|\kappa(z_k)|}\right| \gtrsim \l^{-\frac 12} |l-k|.
\end{equation}
Then
\begin{equation}\label{l2L2dual}
\l^{\frac{d-1}{2}} \int \Big| \sum_l e^{i\l\psi(z_l,t)} \rho(z_l,t)a_l\Big|^2\,dt \lesssim \sum_l |a_l|^2.
\end{equation}
\end{theorem}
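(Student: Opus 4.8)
The plan is to expand the square in \eqref{l2L2dual} and establish an almost-orthogonality estimate for the resulting matrix via Schur's test, the off-diagonal decay coming from a non-stationary phase argument driven by the separation hypothesis \eqref{kappalwr}. Write $\psi_l(t):=\psi(z_l,t)$ and $\rho_l(t):=\rho(z_l,t)$, so that the left side of \eqref{l2L2dual} equals $\sum_{l,k}a_l\overline{a_k}\,I_{lk}$, where
\[
I_{lk}:=\la^{\frac{d-1}{2}}\int e^{i\la(\psi_l(t)-\psi_k(t))}\,\rho_l(t)\,\overline{\rho_k(t)}\,dt.
\]
Since $I_{kl}=\overline{I_{lk}}$, it suffices to bound $\sup_l\sum_k|I_{lk}|$ by a constant depending only on $d$ and the fixed geometry, and for this it is enough to prove $|I_{lk}|\lesssim_N (1+|l-k|)^{-N}$ for every $N$ and then take $N>d-1$. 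The bound $|I_{lk}|\lesssim 1$ is immediate, as $|\rho_l|,|\rho_k|\lesssim 1$ and $\rho_l\overline{\rho_k}$ is supported in $\{|t|\lesssim\la^{-1/2}\}$, so it remains to produce decay when $|l-k|$ is large.

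First I would establish a lower bound for $|\nabla_t(\psi_l-\psi_k)|$ on the support of the amplitude. Let $F$ denote the Fermi chart, so $\psi(z,t)=d_g(z,F(t,0))$, and recall that $\kappa$ is geodesic normal coordinates centered at $F(0,0)$ with $\kappa(0,s)=(0,s)$. In these normal coordinates the differential of $y\mapsto d_g(z,y)$ at $y=F(0,0)$ is the unit covector $-\kappa(z)/|\kappa(z)|$; applying the chain rule to $\psi(z,t)=d_g\big(z,\kappa^{-1}(\Theta(t))\big)$ with $\Theta(t):=\kappa(F(t,0))$ therefore gives
\[
\nabla_t\psi(z,0)=-\,B^{T}\,\frac{(\kappa_1(z),\dots,\kappa_{d-1}(z))}{|\kappa(z)|},
\]
where $B$ is the top $(d-1)\times(d-1)$ block of $D\Theta(0)$; the $d$-th row of $D\Theta(0)$ vanishes because $\gamma_0$ meets $\Sigma$ orthogonally and $g$ is Euclidean to first order at $F(0,0)$, and $B$ is invertible because $\Theta$ parametrizes the $(d-1)$-dimensional image of $\Sigma$. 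As $B$ depends only on the fixed geometry, \eqref{kappalwr} gives $|\nabla_t\psi_l(0)-\nabla_t\psi_k(0)|\gtrsim\la^{-1/2}|l-k|$ for $|l-k|$ large. Since all second $t$-derivatives of $\psi$ are $O(1)$ on the compact region under consideration (where $z$ stays at distance $\approx\delta$ from $F(0,0)$, well inside the injectivity radius) and $\rho_l\overline{\rho_k}$ is supported in $\{|t|\lesssim\la^{-1/2}\}$, the mean value theorem upgrades this to
\[
|\nabla_t(\psi_l-\psi_k)(t)|\gtrsim\la^{-1/2}|l-k|\quad\text{on}\quad\supp(\rho_l\overline{\rho_k}),
\]
once $|l-k|$ exceeds a fixed large constant (for smaller $|l-k|$ we use $|I_{lk}|\lesssim 1$).

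Next I would rescale and integrate by parts. Substituting $t=\la^{-1/2}u$ writes $I_{lk}=\int e^{i\widetilde\Phi_{lk}(u)}\,\widetilde b_{lk}(u)\,du$, where $\widetilde b_{lk}(u):=(\rho_l\overline{\rho_k})(\la^{-1/2}u)$ is, by the hypothesis $|\partial_t^\alpha\rho|\lesssim\la^{|\alpha|/2}$, a bump function supported in $\{|u|\lesssim 1\}$ with all derivatives $O(1)$, while $\widetilde\Phi_{lk}(u):=\la\,(\psi_l-\psi_k)(\la^{-1/2}u)$ satisfies $|\nabla_u\widetilde\Phi_{lk}|\gtrsim|l-k|$ and $|\nabla_u^m\widetilde\Phi_{lk}|\lesssim 1$ for all $m\ge 2$ and $\la\ge 1$. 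Repeated integration by parts with the transpose of $|l-k|^{-2}(\nabla_u\widetilde\Phi_{lk})\cdot\nabla_u$, which fixes $e^{i\widetilde\Phi_{lk}}$ and whose transpose lowers the symbol size by a factor $|l-k|^{-1}$ at each application, gives $|I_{lk}|\lesssim_N|l-k|^{-N}$. Together with $|I_{lk}|\lesssim 1$ this yields $|I_{lk}|\lesssim_N(1+|l-k|)^{-N}$, and Schur's test completes the proof of \eqref{l2L2dual}.

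The main obstacle is the geometric step in the second paragraph: identifying the tangential part of $\nabla_t\psi(z,t)$, up to a fixed invertible linear map and an $O(\la^{-1/2})$ error over the $\la^{-1/2}$-wide support in $t$, with exactly the quantity appearing in the separation hypothesis \eqref{kappalwr}. Once this is available, the rest is the standard Schur/non-stationary phase almost-orthogonality argument, carried out for $d=2$ in \cite{soggekaknik}.
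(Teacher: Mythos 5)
Your proof is correct and follows essentially the same route as the paper: you derive the gradient-separation estimate $|\nabla_t\psi(z_l,0)-\nabla_t\psi(z_k,0)|\gtrsim\lambda^{-1/2}|l-k|$ from the hypothesis \eqref{kappalwr} by exactly the geometric identity the paper invokes (expressing $\nabla_t\psi(z,0)$ through the first $d-1$ normal coordinates of $\kappa(z)/|\kappa(z)|$ via an invertible $(d-1)\times(d-1)$ block, using that $\kappa$ sends the Fermi $s$-axis to the $d$-th coordinate axis), and then carry out the rescaled non-stationary-phase/Schur-test almost-orthogonality argument. The latter step is precisely the content of \cite[Prop.~2.3]{soggekaknik}, to which the paper simply refers once the gradient separation is in hand, so the only difference is that you have written it out in full.
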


The proof of \eqref{l2L2dual} is the same as the one in \cite[Prop. 2.3]{soggekaknik}, once it is observed that
$$
|\nabla_t\psi(z_l,0)-\nabla_t\psi(z_k,0)| \gtrsim \l^{-\frac 12} |l-k|.
$$
But since the pushforward of $\prtl/\prtl z_d$ under $z\mapsto \kappa(z)$ is itself, this is a consequence of \eqref{kappalwr} and the identity
$$
\prtl_{t_i} \psi(z,0) = \langle \nu_i, \kappa(z)/|\kappa(z)| \rangle, \qquad i=1,\dots,d-1
$$
where $\nu_i$ is the pushforward of $\prtl/\prtl z_i$.

\section{Almost Orthogonality}
In this section, we begin the proof of Theorem \ref{thm:bilinear}. We first appeal to \cite[Lemma 3.1]{leebilinear} (which follows results of Bourgain~\cite{bourgainLP} and H\"ormander~\cite{HorOsc}) and the ensuing remark, which states that after a change of coordinates and multiplying $Th$, $h$ by harmless functions of modulus one, we may assume
\begin{equation}\label{phaselin}
\phi(x,s,t) = x\cdot t + \frac 12 s|t|^2 + \mathcal{E}(x,s,t)
\end{equation}
where
\begin{equation}\label{phaseerror}
\mathcal{E}(x,s,t) = O\left((|x|+|s|)^2|t|^2 \right) + O\left((|x|+|s|)|t|^3 \right).
\end{equation}

Let $\psi$ be a smooth bump function supported in $[-1,1]^{d-1}$ satisfying $\sum_{k \in \mathbb{Z}^{d-1}}\psi^2(x-k) =1$ and set $A_\mu(x) = \psi^2(2^j(x-\mu))$ with $\mu \in 2^{-j}\mathbb{Z}^{d-1}$.
\begin{lemma}\label{thm:almorthoT}
Suppose $1 \leq p \leq 2$ and that $T$ is as in Theorem~\ref{thm:bilinear}. There exist amplitudes $a_{\nu,\mu},$ $a_{\nu',\mu}$ both with $x$-support contained in $\supp(A_\mu)$ and satisfying derivative bounds of the form
\begin{equation}\label{growth2j}
|\prtl^\alpha_x a_{\nu,\mu}(x,s,t)| \lesssim_\alpha 2^{j|\alpha|}
\end{equation}
such that if $T_{\nu,\mu}$ is the oscillatory integral operator with phase $\phi$ and amplitude $a_{\nu,\mu}$
$$
T_{\nu,\mu}(h)(x,s) = \int_{\RR^{d-1}} e^{i\l\phi(x,s,t)} a_{\nu,\mu}(x,s,t)h(t)\,dt
$$
then
$$
\left\| A_\mu \sum_{\nu, \nu' \in \Xi_j } T(h^j_{\nu})T(g^j_{\nu'}) \right\|_{L^p(\RR^d)}^p \lesssim \sum_{\nu, \nu' \in \Xi_j }\left\|  T_{\nu,\mu}(h^j_{\nu})T_{\nu',\mu}(g^j_{\nu'})  \right\|_{L^p(\RR^d)}^p.
$$
\end{lemma}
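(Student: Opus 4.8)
The plan is to reduce the asserted bound to a clean almost‑orthogonality statement for a family of functions whose $x$‑frequency supports are essentially disjoint, the disjointness being forced by the Whitney structure of $\Xi_j$ together with the nondegeneracy hypothesis $\det\nabla^2_{xt}\phi\neq0$. First I would fix the amplitudes, taking simply $a_{\nu,\mu}(x,s,t)=a_{\nu',\mu}(x,s,t):=\psi\big(2^j(x-\mu)\big)\,a_\l(x,s,t)$. Since $A_\mu(x)=\psi^2(2^j(x-\mu))$, this yields the pointwise identity $A_\mu\,T(h^j_\nu)T(g^j_{\nu'})=T_{\nu,\mu}(h^j_\nu)\,T_{\nu',\mu}(g^j_{\nu'})$; the containment of the $x$‑support in $\supp(A_\mu)$ is immediate, and the derivative bounds \eqref{growth2j} follow from the product rule and the uniform bounds on $a_\l$. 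The case $p=1$ is then the triangle inequality, so it remains to prove, for $1<p\le2$,
\[
\Big\|\sum_{(\nu,\nu')\in\Xi_j}u_{\nu,\nu'}\Big\|_{L^p(\RR^d)}^p\lesssim\sum_{(\nu,\nu')\in\Xi_j}\big\|u_{\nu,\nu'}\big\|_{L^p(\RR^d)}^p,\qquad u_{\nu,\nu'}:=T_{\nu,\mu}(h^j_\nu)\,T_{\nu',\mu}(g^j_{\nu'}).
\]

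Next I would localize in frequency in the $x$ variables, working with a fixed value of $s$. By the normal form \eqref{phaselin}--\eqref{phaseerror}, $\nabla_x\phi(x,s,t)=t+O\big((|x|+|s|+|t|)|t|^2\big)$, with $\nabla^2_{xt}\phi$ invertible and all partials of $\phi$ of order $\le3$ uniformly bounded on the (small) supports in play. Since $h^j_\nu$ has $t$‑support in the $2^{-j}$‑cube $Q^j_\nu$ about $\nu$ and $a_{\nu,\mu}$ has $x$‑support in a $2^{-j}$‑cube about $\mu$, on which its $x$‑derivatives satisfy \eqref{growth2j}, a standard non‑stationary phase argument — the relevant integration‑by‑parts regime being available because $\l2^{-j}\gg2^j$, since $2^{-j}>N2^{-j_0}\approx N\l^{-1/2}$ — gives, with $\xi_\nu:=\l\nabla_x\phi(\mu,s,\nu)$,
\[
\big|\,\widehat{T_{\nu,\mu}(h^j_\nu)}(\xi,s)\,\big|\lesssim_M 2^{-j(d-1)}\|h^j_\nu\|_{L^1}\Big(1+\tfrac{|\xi-\xi_\nu|}{\l2^{-j}}\Big)^{-M},\qquad M=1,2,\dots,
\]
where the hat denotes Fourier transform in $x$. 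Hence $\widehat{u_{\nu,\nu'}}(\cdot,s)$ is concentrated, up to such rapidly decaying tails, on the ball $B_{\nu,\nu'}(s)$ of radius $\approx\l2^{-j}$ centered at $\xi_\nu+\xi_{\nu'}=\l\big(\nabla_x\phi(\mu,s,\nu)+\nabla_x\phi(\mu,s,\nu')\big)$.

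The combinatorial heart of the matter is that $\{B_{\nu,\nu'}(s)\}_{(\nu,\nu')\in\Xi_j}$ has $O(1)$ overlap, uniformly in $s$ and $\mu$, and more generally $\{2^kB_{\nu,\nu'}(s)\}_{(\nu,\nu')\in\Xi_j}$ has overlap $\lesssim2^{k(d-1)}$. To see this, write $\nu=m+\delta$, $\nu'=m-\delta$ with $m,\delta\in2^{-j-1}\mathbb{Z}^{d-1}$; Taylor's theorem gives $\xi_\nu+\xi_{\nu'}=2\l\nabla_x\phi(\mu,s,m)+O(\l|\nu-\nu'|^2)$, and since $|\nu-\nu'|\approx2^{-j}\le\veps_0$ this error is $\lesssim\l2^{-j}$ and is absorbed into the radius. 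Because $\nabla^2_{xt}\phi$ is invertible on the region in question, $m\mapsto\nabla_x\phi(\mu,s,m)$ is bi‑Lipschitz, so $2^kB_{\nu,\nu'}(s)$ and $2^kB_{\nu_1,\nu_1'}(s)$ can meet only if $|m-m_1|\lesssim2^k2^{-j}$, confining $m$ to $O(2^{k(d-1)})$ lattice points; and membership in $\Xi_j$ forces $|\delta|\approx2^{-j}$, i.e.\ $O(1)$ lattice points. Since $(m,\delta)$ determines $(\nu,\nu')$, the overlap bounds follow. With this in hand I would conclude by a duality argument, fibrewise in $s$: choosing $g=g(x)$ with $\|g\|_{L^{p'}_x}\le1$ realizing $\|\sum_{(\nu,\nu')}u_{\nu,\nu'}(\cdot,s)\|_{L^p_x}$, inserting a smooth frequency projection $P_{2B_{\nu,\nu'}(s)}$ onto $g$ in the $(\nu,\nu')$‑th pairing, and applying H\"older in $(\nu,\nu')$ with exponents $p,p'$ bounds this — ignoring tails momentarily — by $\big(\sum_{(\nu,\nu')}\|u_{\nu,\nu'}(\cdot,s)\|_{L^p_x}^p\big)^{1/p}\big(\sum_{(\nu,\nu')}\|P_{2B_{\nu,\nu'}(s)}g\|_{L^{p'}_x}^{p'}\big)^{1/p'}$. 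Since $p'\ge2$, the inclusion $\|\cdot\|_{\ell^{p'}}\le\|\cdot\|_{\ell^2}$ together with the overlap bound controls the last factor by $\big\|\big(\sum_{(\nu,\nu')}|P_{2B_{\nu,\nu'}(s)}g|^2\big)^{1/2}\big\|_{L^{p'}_x}\lesssim\|g\|_{L^{p'}_x}$, the final inequality being the standard square‑function estimate for a boundedly overlapping family of smooth frequency projections adapted to balls of comparable radius. Raising to the power $p$, integrating in $s$, and absorbing the rapidly decaying tails — by splitting each $u_{\nu,\nu'}$ into pieces frequency‑localized to the dyadic dilates $2^kB_{\nu,\nu'}(s)$, whose $L^p$‑norms gain $2^{-kM}$ while the dilated‑ball overlap costs only $2^{k(d-1)}$, then taking $M$ large and summing in $k$ — completes the proof.

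I expect the main obstacle to be the frequency‑localization step and its interaction with the orthogonality argument: one must verify that the integration‑by‑parts regime ($\l2^{-j}\gg2^j$) is available throughout the entire Whitney range $2^{-j}>N2^{-j_0}$ — which is precisely why the scale $j=j_0$ must be split off and treated by \eqref{kaknikterm} instead — and one must track the frequency tails carefully enough that they are absorbed rather than lost. The combinatorial overlap estimate, though the conceptual key, is elementary once this frequency picture is in place and uses only the nondegeneracy $\det\nabla^2_{xt}\phi\neq0$, not the stronger Carleson--Sj\"olin condition \eqref{CScond}.
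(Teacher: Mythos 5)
Your overall strategy---localize in frequency about $\l\big(\nabla_x\phi(\mu,s,\nu)+\nabla_x\phi(\mu,s,\nu')\big)$ at scale $\l 2^{-j}$ and exploit the $O(1)$ overlap of these balls coming from the invertibility of $\nabla^2_{xt}\phi$ and the Whitney structure of $\Xi_j$---is the same geometric picture that drives the paper's proof, and your observation that only $\det\nabla^2_{xt}\phi\neq 0$ (not the full Carleson--Sj\"olin condition) is needed here is correct. The derivation of the Fourier concentration estimate from nonstationary phase, and the bookkeeping that $\l 2^{-j}\gg 2^j$ throughout the Whitney range $2^{-j}>N\l^{-1/2}$, are also sound.

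However, the way you close the argument has a genuine gap in the treatment of the frequency tails. You choose the \emph{trivial} amplitudes $a_{\nu,\mu}=a_{\nu',\mu}=\psi(2^j(x-\mu))a_\l$, so that $u_{\nu,\nu'}:=T_{\nu,\mu}(h^j_\nu)T_{\nu',\mu}(g^j_{\nu'})=A_\mu T(h^j_\nu)T(g^j_{\nu'})$ and the lemma reduces to a pure $\ell^p$-orthogonality statement for $\{u_{\nu,\nu'}\}$. But $\widehat{u_{\nu,\nu'}}$ is only \emph{approximately} supported in $B_{\nu,\nu'}(s)$, and your tail estimate ``$\|P_{2^kB\setminus 2^{k-1}B}u_{\nu,\nu'}\|_{L^p}\lesssim 2^{-kM}\|u_{\nu,\nu'}\|_{L^p}$'' does not follow from the nonstationary-phase bound. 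That bound controls $|\widehat{u_{\nu,\nu'}}|$ in \emph{absolute} terms (essentially in terms of $\|h^j_\nu\|_{L^1}\|g^j_{\nu'}\|_{L^1}$), but gives no lower bound on $\|u_{\nu,\nu'}\|_{L^p}$---which can be arbitrarily small relative to the a priori size (e.g., cancellation in $Th^j_\nu$)---so the tails cannot be absorbed into $\big(\sum\|u_{\nu,\nu'}\|_{L^p}^p\big)^{1/p}$ as claimed. In addition, the square-function inequality $\big\|\big(\sum|P_{2B_{\nu,\nu'}}g|^2\big)^{1/2}\big\|_{L^{p'}}\lesssim\|g\|_{L^{p'}}$ for boundedly overlapping balls of equal radius at $p'\ge 2$, while true, is a Rubio-de-Francia-type result, and when you dualize through it you are essentially going through (a rearrangement of) the $\ell^p(L^p)\to L^p$ bound for the corresponding multiplier sum.

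The paper sidesteps both of these points at once by \emph{not} taking trivial amplitudes: applying the differential operator $\big(1+(\l^{-1}2^j)^2|\l\nabla_x\Phi(\mu,\nu,\nu')-D_x|^2\big)^N$ to $A_\mu T^s(h^j_\nu)T^s(g^j_{\nu'})$ produces \emph{new} amplitudes $a_{\nu,\mu},a_{\nu',\mu}$ still satisfying \eqref{growth2j}, so that the \emph{exact} identity $A_\mu T^s(h^j_\nu)T^s(g^j_{\nu'})=P_{\nu,\nu'}(D_x)\big(T^s_{\nu,\mu}(h^j_\nu)T^s_{\nu',\mu}(g^j_{\nu'})\big)$ holds with $P_{\nu,\nu'}$ a rapidly decaying (not sharply truncated) multiplier. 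Because the map $\{f_{\nu,\nu'}\}\mapsto\sum P_{\nu,\nu'}f_{\nu,\nu'}$ is then genuinely linear, its $\ell^1(L^1)\to L^1$ bound (Young) and $\ell^2(L^2)\to L^2$ bound (Plancherel plus separation of the centers $\nabla_x\Phi(\mu,\nu,\nu')$) interpolate directly to $\ell^p(L^p)\to L^p$, and there are no residual tails to absorb. If you want to keep your cleaner choice of amplitude, you would at minimum need either to replace the sharp frequency cutoffs by an exact reproducing identity (which is what the paper's differential operator provides), or to find a tail argument that does not require comparing the tail of $u_{\nu,\nu'}$ to $\|u_{\nu,\nu'}\|_{L^p}$ itself.
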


\begin{proof}
For a given $s$, consider the slice of $T(h)$ at $s$ $T^s(h)(x) = T(h)(x,r)|_{r=s}$.  It suffices to show that
\begin{equation*}
\left\| A_\mu \sum_{\nu, \nu' \in \Xi_j } T^s(h^j_{\nu})T^s(g^j_{\nu'}) \right\|_{L^p(\RR^{d-1})}^p \lesssim \sum_{\nu, \nu' \in \Xi_j }\left\|  T_{\nu,\mu}^s(h^j_{\nu})T_{\nu',\mu}^s(g^j_{\nu'})  \right\|_{L^p(\RR^{d-1})}^p,
\end{equation*}
and hence we shall assume that $s$ is fixed throughout the proof.  Now let $\Phi(x,t,t') = \phi(x,s,t) + \phi(x,s,t')$ and observe that $A_\mu T^s(h^j_{\nu})T^s(g^j_{\nu'})$ can be written as
$$
A_\mu(x) \int e^{i\l \Phi(x,t,t')} a(x,s,t)a(x,s,t')h^j_{\nu}(t)g^j_{\nu'}(t')\,dtdt',
$$
Treating $D_x = -i\nabla_x$ as a vector-valued differential operator we want to write
\begin{equation}\label{bilinearderiv}
(1+(\l^{-1}2^j)^2|\l\nabla_x \Phi(\mu,\nu,\nu')-D_x|^2)^N A_\mu T^s(h^j_{\nu})T^s(g^j_{\nu'}) =T_{\nu,\mu}^s(h^j_{\nu})T_{\nu',\mu}^s(g^j_{\nu'})
\end{equation}
for some $N$ large based on $d$ and each operator on the right satisfies
\eqref{growth2j}.
It thus suffices to see that this can be done for any monomial of $$\l^{-1}2^j(\l\nabla_x \Phi(\mu,\nu,\nu')-D_x),$$ which in turn will follow by induction.  To this end, observe that products of functions satsifying \eqref{growth2j} satisfy the same condition as do weighted derivatives $(c \prtl_x)^\alpha$ of such functions provided $|c| \leq 2^{-j}$.  On $\supp(A_\mu)\times Q^j_{\nu}\times Q^j_{\nu'}$ we have that
$$
\l^{-1}2^j \left(\l\prtl_k \Phi(\mu,\nu,\nu')-\l\prtl_k \Phi(x,t,t')\right)
$$
satisfies \eqref{growth2j}.  Moreover, since $\l^{-1}2^{j} \leq 2^{-j}$, it is seen that for any $\alpha$, $(\l^{-1}2^j\prtl_x)^\alpha A_\mu^{1/2}$ satisfies \eqref{growth2j}.  The claim then follows.

It now suffices to see that if $P_{\nu,\nu'}$ is the Fourier multiplier
$$
P_{\nu,\nu'}(D_x)= (1+(\l^{-1}2^j)^2|\l\nabla_x \Phi(\mu,\nu,\nu')-D_x|^2)^{-N},
$$
then for any sequence of $\{f_{\nu,\nu'} \}$ of Schwartz class functions defined on $\RR^{d-1}$,
$$
\left\| \sum_{\nu, \nu' \in \Xi_j } P_{\nu,\nu'}  f_{\nu,\nu'} \right\|_{L^2(\RR^{d-1})}^2 \lesssim \sum_{\nu, \nu' \in \Xi_j } \| f_{\nu,\nu'} \|_{L^2(\RR^{d-1})}^2,
$$
$$
\left\| \sum_{\nu, \nu' \in \Xi_j } P_{\nu,\nu'}  f_{\nu,\nu'} \right\|_{L^1(\RR^{d-1})} \lesssim \sum_{\nu, \nu' \in \Xi_j } \| f_{\nu,\nu'} \|_{L^1(\RR^{d-1})}.
$$
The latter follows from the triangle inequality and Young's inequality for convolutions, so it suffices to treat the former.  But $\nabla_x \Phi(\mu,\nu,\nu') = 2\nabla_x \phi(\mu,s,\nu) + O(2^{-j})$, so the invertibility of $\nabla^2\phi_{x,t}$ gives
$$
2^j|\nabla_x \Phi(\mu,\nu,\nu')-\nabla_x \Phi(\mu,\tilde{\nu},\tilde{\nu}')| \approx 2^j |\nu-\tilde{\nu}|.
$$
Recall that for each $\nu$, the number of $\nu'$ such that $(\nu,\nu') \in \Xi_j$ is $O(1)$.  Therefore since the $\nu$ range over a regularly spaced $2^{-j}$ lattice, the desired bound follows from a routine computation using Plancherel's identity.
\end{proof}

Returning to the proof of Theorem \ref{thm:bilinear}, fix a pair $(\nu,\nu') \in \Xi_j$.  Set $h_1(t) = h_{\nu}^j(2^{-j}t)$, $a_{j,\nu,\mu}(x,s,t) = a_{\nu,\mu}(x,s,2^{-j}t)$, $\phi_{j}(x,s,t) = 2^{j} \phi(x,s,2^{-j}t)$ so that rescaling variables $t \mapsto 2^{-j}t$ in the integral defining $T_{\nu,\mu}(h^j_{\nu})(x,s)$ yields
$$
T_{j,\nu,\mu}(h_1)(x,s) := \int e^{i\l 2^{-j}\phi_j(x,s,t)} a_{\nu,\mu}(x,s,t)h_1(t)\,dt =  2^{j(d-1)}T_{\nu,\mu}(h^j_{\nu})(x,s).
$$
Also set $h_2(t) = h_{\nu'}^j(2^{-j}t)$ and define $T_{j,\nu',\mu}(h_2)(x,s)$ analogously, noting that $\phi_j$ remains independent of $\nu$, $\nu'$.  Moreover, we may assume that $a_{j,\nu,\mu}(x,s,\cdot)$ (resp. $a_{j,\nu',\mu}(x,s,\cdot)$) is supported in a slightly larger cube containing $\supp(h_1)$ (resp. $\supp(h_2)$).  It is helpful to observe that given \eqref{phaselin}, \eqref{phaseerror}
$$
\phi_j(x,s,t) = x\cdot t + 2^{-j-1}s|t|^2 + 2^j\mathcal{E}(x,s,2^{-j}t).
$$

\begin{lemma}\label{thm:almorthoh}
There exists an amplitude $\tilde{a}_{j,\nu,\mu}(x,s,t)$ satisfying bounds of the form \eqref{growth2j} such that
$$
(1+2^{2j}|\l^{-1} 2^{j}D_t - \mu|^2)^{N} e^{i\l2^{-j}\phi_j(x,s,t)}a_{j,\nu,\mu}(x,s,t) = e^{i\l2^{-j}\phi_j(x,s,t)}\tilde{a}_{j,\nu,\mu}(x,s,t).
$$
\end{lemma}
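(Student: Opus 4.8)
The plan is to expand the Fourier multiplier $(1+2^{2j}|\l^{-1}2^{j}D_t-\mu|^2)^{N}$ as a polynomial of degree $2N$ in the commuting first-order operators $L_\ell:=2^{j}\bigl(\l^{-1}2^{j}D_{t_\ell}-\mu_\ell\bigr)$, $1\le\ell\le d-1$, using $2^{2j}|\l^{-1}2^{j}D_t-\mu|^2=\sum_\ell L_\ell^2$, and to see how a single $L_\ell$ transforms an amplitude when it is commuted past the oscillatory factor $e^{i\l2^{-j}\phi_j}$. Since $\l^{-1}2^{2j}\,\partial_{t_\ell}\bigl(\l2^{-j}\phi_j\bigr)=2^{j}\partial_{t_\ell}\phi_j$, a direct computation gives
\[
L_\ell\Bigl(e^{i\l2^{-j}\phi_j(x,s,t)}\,b(x,s,t)\Bigr)=e^{i\l2^{-j}\phi_j(x,s,t)}\Bigl(c_\ell(x,s,t)\,b(x,s,t)-i\l^{-1}2^{2j}\,\partial_{t_\ell}b(x,s,t)\Bigr),
\]
where $c_\ell:=2^{j}\bigl(\partial_{t_\ell}\phi_j-\mu_\ell\bigr)$. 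Hence $L_\ell$ acts on amplitudes by $b\mapsto c_\ell\,b-i\l^{-1}2^{2j}\partial_{t_\ell}b$, and the lemma reduces to showing this operation preserves an appropriate symbol class, after which one iterates over the at most $2N$ factors.

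To set up that class I would first record, from \eqref{phaselin}, \eqref{phaseerror} and $\phi_j(x,s,t)=x\cdot t+2^{-j-1}s|t|^2+2^{j}\mathcal{E}(x,s,2^{-j}t)$, that
\[
c_\ell=2^{j}(x_\ell-\mu_\ell)+s\,t_\ell+2^{j}\,(\partial_{t_\ell}\mathcal{E})(x,s,2^{-j}t).
\]
On $\supp(A_\mu)$ we have $|x-\mu|\lesssim 2^{-j}$, the rescaled $t$-variable ranges over a fixed compact cube, and $|s|$ is bounded; the chain rule together with the vanishing of $\mathcal{E}$ to second order in $t$ in \eqref{phaseerror} then shows that $c_\ell$ is bounded and satisfies $|\partial_x^\alpha\partial_t^\beta c_\ell|\lesssim_{\alpha,\beta}2^{j|\alpha|}$: the first summand vanishes after two $x$-derivatives, the second is $x$-independent, and in the $\mathcal{E}$-summand the prefactor $2^{j}$ is absorbed by the $2^{-j}$ arising when $\partial_t\mathcal{E}$ (which vanishes at $t=0$) is evaluated at $2^{-j}t$, while additional $t$-derivatives only produce further gains $2^{-j}$. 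Likewise $a_{j,\nu,\mu}(x,s,t)=a_{\nu,\mu}(x,s,2^{-j}t)$ satisfies $|\partial_x^\alpha\partial_t^\beta a_{j,\nu,\mu}|\lesssim 2^{-j|\beta|}2^{j|\alpha|}$, since by \eqref{growth2j} its $x$-derivatives cost $2^{j|\alpha|}$ and its $t$-derivatives are harmless (being built from a harmless amplitude by operations in $x$ alone), each rescaled one gaining a $2^{-j}$.

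Next I would verify that $b\mapsto c_\ell\,b-i\l^{-1}2^{2j}\partial_{t_\ell}b$ maps the class $\mathcal{B}_m$ --- smooth functions with $x$-support in $\supp(A_\mu)$, $t$-support in the fixed cube, and $|\partial_x^\alpha\partial_t^\beta b|\lesssim 2^{j|\alpha|}$ for all $|\beta|\le m$ --- into $\mathcal{B}_{m-1}$. The product $c_\ell b$ stays in $\mathcal{B}_m$ by the Leibniz rule, since $2^{j|\alpha_1|}2^{j|\alpha_2|}=2^{j|\alpha|}$ and the supports only shrink. For the second summand the key input is the inequality $\l^{-1}2^{2j}\lesssim 1$, valid because this $j$ lies in the Whitney range $2^{-j}\gtrsim 2^{-j_0}\gtrsim\l^{-1/2}$ of \eqref{Tdecomposition}; then $|\partial_x^\alpha\partial_t^\beta(\l^{-1}2^{2j}\partial_{t_\ell}b)|=\l^{-1}2^{2j}|\partial_x^\alpha\partial_t^{\beta'}b|\lesssim 2^{j|\alpha|}$ with $|\beta'|=|\beta|+1\le m$, again with support unchanged. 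Since $a_{j,\nu,\mu}\in\mathcal{B}_m$ for every $m$ and $(1+\sum_\ell L_\ell^2)^N$ expands into a degree-$2N$ polynomial in the (commuting) $L_\ell$, applying it to $a_{j,\nu,\mu}$ produces an amplitude in $\mathcal{B}_0$; this is the desired $\tilde a_{j,\nu,\mu}$, which satisfies \eqref{growth2j} and has the asserted support.

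I do not anticipate a genuine obstacle: the argument is entirely bookkeeping of the powers $2^{\pm j}$ coming from the dilation $t\mapsto 2^{-j}t$. The two points requiring a little care are that the error phase $\mathcal{E}(x,s,2^{-j}t)$ costs nothing relevant after the multiplication by $2^{j}$ --- exactly where the quadratic/cubic vanishing in \eqref{phaseerror} is used --- and the repeated appeal to $\l^{-1}2^{2j}\lesssim 1$, i.e.\ the lower bound $2^{-j}\gtrsim\l^{-1/2}$ imposed by the Whitney decomposition.
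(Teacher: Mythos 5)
Your proof follows essentially the same approach as the paper's: conjugate the commuting first-order operators $2^{j}(\l^{-1}2^{j}D_{t_\ell}-\mu_\ell)$ past the oscillatory factor, observe that the multiplier $2^{j}(\partial_{t_\ell}\phi_j-\mu_\ell)$ obeys \eqref{growth2j} on $\supp(A_\mu)$ because of \eqref{phaselin}, \eqref{phaseerror}, that the derivative term carries the favorable weight $\l^{-1}2^{2j}\le 1$, and then iterate. The symbol-class bookkeeping via $\mathcal{B}_m$ is a cleaner formalization of the paper's ``inductive argument akin to that in Lemma~\ref{thm:almorthoT}.''

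One small overclaim to flag: the bound $|\partial_x^\alpha\partial_t^\beta a_{j,\nu,\mu}|\lesssim 2^{-j|\beta|}2^{j|\alpha|}$ is too strong, and the stated justification (``built from a harmless amplitude by operations in $x$ alone'') is not quite right. The construction of $a_{\nu,\mu}$ in Lemma~\ref{thm:almorthoT} introduces $t$-dependent factors of the form $2^{j}\bigl(\partial_{x_k}\Phi(\mu,\nu,\nu')-\partial_{x_k}\Phi(x,t,t')\bigr)$, whose $t$-derivatives cost $2^{j}$; after rescaling $t\mapsto 2^{-j}t$ one only gets $|\partial_x^\alpha\partial_t^\beta a_{j,\nu,\mu}|\lesssim 2^{j|\alpha|}$ (bounded $t$-derivatives, no extra decay). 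Fortunately this is exactly what membership in your class $\mathcal{B}_m$ requires, so the iteration goes through unchanged; the stronger $2^{-j|\beta|}$ decay is never used.
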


\begin{proof}
Observe that
$$
e^{-i\l2^{-j}\phi_j}2^j(\l^{-1}2^jD_{t_k}-\mu_k)e^{i\l2^{-j}\phi_j}a_{j,\nu,\mu} = 2^j(\prtl_{t_k}\phi_j - \mu_k)a_{j,\nu,\mu} +\l^{-1}2^{2j}D_{t_k}a_{j,\nu,\mu}.
$$
Since $\l^{-1}2^{2j} \leq 1$, second term satisfies \eqref{growth2j}.  Moreover, by \eqref{phaselin}, \eqref{phaseerror}
$$
(\prtl_{t_k}\phi_j(x,s,t) - \mu_k) = x_k-\mu_k + O(2^{-j})
$$
and thus by the support properties of $a_{j,\nu,\mu}$ the first term satisfies \eqref{growth2j} as well.  The lemma then follows by an inductive argument akin to that in Lemma \ref{thm:almorthoT}.
\end{proof}

Given this lemma we let $P_\mu=P_\mu(D_t)$ be the Fourier multiplier with symbol $P_\mu(\zeta) = (1+2^{2j}|\l^{-1} 2^{j}\zeta + \mu|^2)^{-N}$ and observe that by self-adjointness of $P_\mu(-D_t)$, we have
$$
T_{j,\nu,\mu}(h_1)(x,s) = \int e^{i\l2^{-j} \phi_j(x,s,t)} \tilde{a}_{j,\nu,\mu}(x,s,t)(P_\mu h_1)(t)\,dt
$$
Thus if we can show that
\begin{equation}\label{preparab}
\left\|T_{j,\nu,\mu}(h_1)T_{j,\nu',\mu}(h_2)\right\|_{L^{\frac{q}{2}}(\RR^d)} \lesssim \l^{-\frac{2d}{q}}2^{\frac{2j(d+1)}{q}}\|P_\mu h_1\|_{L^2(\RR^{d-1})}\|P_\mu h_2\|_{L^2(\RR^{d-1})}
\end{equation}
taking a sum with respect to $\mu$ and applying Cauchy-Schwarz will give
$$
\sum_\mu \left\|T_{j,\nu,\mu}(h_1)T_{j,\nu',\mu}(h_2)\right\|_{L^{\frac{q}{2}}(\RR^d)}^{\frac q2} \lesssim \left(\l^{-\frac{2d}{q}}2^{\frac{2j(d+1)}{q}}\right)^{\frac q2} \prod_{i=1}^2\left( \sum_\mu \|P_\mu h_i\|_{L^2(\RR^{d-1})}^q \right)^{\frac 12}
$$
and by almost orthogonality of the $P_\mu h_i$, $( \sum_\mu \|P_\mu h_i\|_{L^2}^q )^{\frac 12} \lesssim \|h_i\|_{L^2}^{\frac q2}$.  Rescaling therefore yields
\begin{equation}\label{hnubound}
\sum_\mu \left\|T_{\nu,\mu}(h^j_\nu)T_{\nu',\mu}(h^j_{\nu'})\right\|_{L^{\frac{q}{2}}(\RR^d)}^{\frac q2} \lesssim \left(\l^{-\frac{2d}{q}}2^{j(\frac{2(d+1)}{q}-(d-1))}\right)^{\frac q2} \|h^j_\nu\|_{L^2(\RR^{d-1})}^{\frac q2}\|h^j_{\nu'}\|_{L^2(\RR^{d-1})}^{\frac q2}.
\end{equation}
Hence Lemma \ref{thm:almorthoT} and Cauchy-Schwarz mean that the left hand side of \eqref{bilinearjbound} is dominated by
$$
\l^{-\frac{2d}{q}}2^{j(\frac{2(d+1)}{q}-(d-1))} \left(\sum_\nu\|h^j_\nu\|_{L^2(\RR^{d-1})}^{q}\right)^{\frac 1q}\left(\sum_\nu\|h^j_{\nu'}\|_{L^2(\RR^{d-1})}^{q}\right)^{\frac 1q}.
$$
The desired estimate \eqref{bilinearjbound} now follows from the embedding $\ell^2 \hookrightarrow \ell^q$.

We are left to show \eqref{preparab}.  At this stage, $d(\supp(h_1),\supp(h_2)) \approx 1$, but we want to exhibit the uniformity of the phases and amplitudes.  To this end, observe that
$$
\phi(x, s, t+\nu) = (x+s\nu) \cdot t + \frac 12 s|t|^2 +\mathcal{E}(x, s, t+\nu)+\frac{s}{2}|\nu|^2 + x\cdot\nu.
$$
The last two terms here can be neglected. A Taylor expansion gives
$$
\mathcal{E}(x , s, t+\nu)=\mathcal{E}(x , s, \nu)+\nabla_t \mathcal{E}(x , s, \nu)\cdot t+ \frac 12 \sum_{|\alpha| = 2}\prtl^\alpha_t \mathcal{E}(x , s, \nu)t^\alpha + R_\nu(x,s,\nu).
$$
As observed in \cite[(3.9)]{leebilinear}, we may change variables $y= x+s\nu +\nabla_t \mathcal{E}(x, s, \nu)$ and, neglecting terms which can be absorbed into either $T(h_i)$ or $h_i$, we can write
$$
\phi(y,s,t+\nu) = y\cdot t + \frac 12 s|t|^2 +\mathcal{E}_\nu(y, s, t),
$$
where $\mathcal{E}_\nu(y, s, t)$ will also satisfy \eqref{phaseerror} (with $y$ replacing $x$).  Hence
$$
\phi_j(y,s,t+2^j \nu) = 2^j\phi(y,s,2^{-j}t+\nu)=y\cdot t + 2^{-j-1} s|t|^2 +2^j\mathcal{E}_\nu(y, s, 2^{-j}t).
$$
Also define $\sigma_s = \mu+s\nu+\nabla_t\mathcal{E}(\mu,s,\nu)$ (recalling that $\mu$ is the center of the $x$-support of $\tilde{a}_{j,\nu,\mu}$, $\tilde{a}_{j,\nu',\mu}$) and observe that linearizing the change of coordinates gives that if $|x-\mu|\lesssim 2^{-j}$, then $|y-\sigma_s|\lesssim 2^{-j}$. We next set
$$
\tilde{\phi}(y,s,t) = 2^{2j}\phi_j(2^{-j}y+\sigma_s,s,t) = y\cdot t + \frac 12 s|t|^2 +2^{2j}\mathcal{E}_\nu(2^{-j}y+\sigma_s, s, 2^{-j}t +\nu)
$$
and define
$$
\widetilde{T}_1(g_1)(y,s) = \int e^{i\l2^{-2j} \tilde{\phi}(y,s,t)} \tilde{a}_{j,\nu,\mu}(2^{-j}y+\sigma_s,s,t)g_1(t)\,dt
$$
and $\widetilde{T}_2(g_2)$ in the same way except with amplitude $\tilde{a}_{j,\nu',\mu}(2^{-j}y+\sigma_s,s,t)$.  The bound \eqref{preparab} will then follow from
\begin{equation}\label{bilinparab}
\|\widetilde{T}_1(g_1)\widetilde{T}_2(g_2)\|_{L^{\frac q2}(\RR^d)} \lesssim (\l2^{-2j})^{-\frac{2d}{q}} \|g_1\|_{L^2(\RR^{d-1})}\|g_2\|_{L^2(\RR^{d-1})}.
\end{equation}
This estimate in turn follows from one of Lee \cite[Theorem 1.1]{leebilinear} along with $\veps$-removal lemmas in the next section.  We state this using his hypotheses.

For $i=1,2$, let $T_i$ be oscillatory integral operators
$$
T_i f(z) = \int e^{i\l\phi_i(z,\xi)} a_i(z,\xi) f(\xi)\,d\xi \qquad z = (x,s) \in \RR^{d-1}\times \RR, \quad \xi \in \RR^{d-1}
$$
with $a_i$ smooth and of sufficiently small compact support.  Assume that $\nabla^2_{x\xi}\phi_i$ has rank $d-1$ and that $\xi \mapsto \nabla_x \phi_i (x,s,\xi)$ is a diffeomorphism on $\supp(a_i)$.  Take $q_i(x,s,\xi) = \prtl_s \phi_i(x,s,[\nabla_x\phi_i(x,s,\cdot)]^{-1}(\xi))$ so that $\prtl_s \phi_i(x,s,\xi) = q_i (x,s,\nabla_x\phi_i(x,s,\xi))$.  Suppose further that $\nabla^2_{\xi\xi} q_i(z,\nabla_x\phi_i(z,\xi_i))$ is nonsingular for $(z,\xi_i) \in \supp(a_i)$.
\begin{theorem}\label{thm:leenoloss}
For $i=1,2$, $a_i$, $\phi_i$ satisfy the hypotheses outlined in the preceding discussion.  Set $u_i = \nabla_x \phi(z,\xi_i)$ and $\delta(z,\xi_1,\xi_2) = \nabla_\xi q_1(z,u_1) - \nabla_\xi q_2(z,u_2)$.  Then if
\begin{equation}\label{leehyp}
|\langle \nabla^2_{x\xi} \phi(z,\xi_i)\delta(z,\xi_1,\xi_2), [\nabla^2_{x\xi}\phi(z,\xi_i)]^{-1} [\nabla^2_{\xi\xi}q_i(z,u_i)]^{-1} \delta(z,\xi_1,\xi_2) \rangle| \geq c > 0
\end{equation}
for $i=1,2$, then for any $\frac{d+2}{d} <  p  $
\begin{equation}\label{bilinearnoloss}
\|T_1 f_1 T_2 f_2\|_{L^p(\RR^d)} \lesssim \l^{-\frac{d}{p}}\|f_1\|_{L^2(\RR^{d-1})}\|f_2\|_{L^2(\RR^{d-1})}.
\end{equation}
Moreover, if $T_1$, $T_2$ are members of a family of operators whose phase and amplitude functions satisfy these hypotheses uniformly and are uniformly bounded in $C^\infty$ with amplitudes supported in a set of uniform size, then the implicit constant in \eqref{bilinearnoloss} can be taken independent of each operator in the family.
\end{theorem}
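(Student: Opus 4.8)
The plan is to get \eqref{bilinearnoloss} in two stages: first extract from Lee's work a version of the bilinear bound that carries an $\veps$-loss, then strip that loss by a variable-coefficient analogue of the bilinear ``$\veps$-removal'' lemma of Tao and Vargas \cite[Lemma 2.4]{tv1} (compare Bourgain \cite{bourgaincone}), which is the business of the next section. Under exactly the hypotheses on $\phi_i$, $a_i$, $q_i$ and the transversality function $\delta(z,\xi_1,\xi_2)$ recorded above, Lee's \cite[Theorem 1.1]{leebilinear} furnishes, for every $\veps>0$ and every $p>\frac{d+2}{d}$ (indeed for a somewhat wider range of $p$), the bound
\begin{equation*}
\|T_1 f_1\,T_2 f_2\|_{L^p(\RR^d)}\lesssim_\veps \lambda^{-\frac{d}{p}+\veps}\,\|f_1\|_{L^2(\RR^{d-1})}\,\|f_2\|_{L^2(\RR^{d-1})},
\end{equation*}
with implicit constant uniform over families of phases and amplitudes satisfying the hypotheses with uniform $C^\infty$-norms and supports of uniform size; this uniformity is visible in Lee's argument, which proceeds by induction on scale. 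So everything is reduced to removing the factor $\lambda^\veps$.

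To do that, normalize $\|f_1\|_2=\|f_2\|_2=1$, put $F=T_1 f_1\cdot T_2 f_2$, and decompose the $z$-space into the dyadic super-level sets $E_k=\{2^{k-1}<|F|\le 2^k\}$. Since the $\xi$-supports are compact, $|F|\lesssim 1$, so $\|F\|_{L^p}^p\approx\sum_k 2^{kp}|E_k|$ with only the $O(\log\lambda)$ levels $\lambda^{-d/p}\lesssim 2^k\lesssim 1$ contributing, and applying the $\veps$-loss estimate directly gives $2^{kp}|E_k|\lesssim\lambda^{-d+C\veps}$ --- still with the loss. The refinement is to rescale parabolically onto sub-balls of radius $R\in[\lambda^{-1/2},\delta]$, on which the effective oscillation parameter is only $\sim\lambda R^2<\lambda$ and, crucially, on which the rescaled operator is again of Lee's type --- after the dilation the phase reverts to a normal form of the shape \eqref{phaselin}--\eqref{phaseerror} with curvature error $O(R^{-1})$, so the hypotheses persist --- so that the loss suffered on a single sub-ball is only $(\lambda R^2)^\veps$. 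The contributions of distinct sub-balls reassemble by an $L^2$ almost-orthogonality (Bessel) inequality, which is legitimate because the transversality \eqref{leehyp}, together with Plancherel and H\"ormander's $L^2$ oscillatory integral theorem \cite{HorOsc}, forces the $z$-frequency supports of the relevant pieces of $T_1 f_1$ and $T_2 f_2$ to overlap boundedly. Iterating the rescaling and optimizing $R$ against $\veps$ turns the $\lambda^\veps$ factor into a convergent geometric series over the levels and scales and produces $\|F\|_{L^p}^p\lesssim\lambda^{-d}$, i.e.\ \eqref{bilinearnoloss}, for every $p$ in the range where the $\veps$-loss estimate is available; in particular for $p>\frac{d+2}{d}$. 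Because each ingredient --- Lee's estimate, the persistence of his hypotheses under rescaling, and the $L^2$ orthogonality --- is quantitative and uniform over families obeying the stated hypotheses with uniform $C^\infty$-bounds and support size, so is the final constant, which is the asserted uniformity.

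The principal obstacle is exactly this $\veps$-removal in the variable-coefficient setting. In the model constant-coefficient case treated by Tao and Vargas one has exact parabolic dilations and the full affine symmetry group of the paraboloid, so every scale reduces literally to one fixed model operator; here those symmetries survive only approximately, and the level-set decomposition and the induction on scale must be run while bookkeeping the accumulating curvature errors in \eqref{phaseerror} and re-verifying, at every scale, both the Carleson--Sj\"olin condition ($\nabla^2_{x\xi}\phi_i$ of full rank) and the transversality \eqref{leehyp}. A secondary difficulty is to establish the sharp bilinear $L^2$ orthogonality underlying the reassembly with constants independent of $\lambda$ and of the particular operator: one Fourier-transforms in $z$, notes that $\widehat{T_i f_i}$ concentrates within $O(1)$ of the dilated hypersurface $\{\lambda\nabla_z\phi_i(z,\xi)\}$, and uses \eqref{leehyp} to see that these two hypersurfaces meet transversally, so that $\widehat{T_1 f_1}*\widehat{T_2 f_2}$ has controlled overlap.
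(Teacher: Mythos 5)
Your high-level architecture is correct and matches the paper: one first invokes Lee's \cite[Theorem 1.1]{leebilinear} to obtain the bound with an $\l^{\alpha}$-loss for every $\alpha>0$ (and the uniformity over families satisfying the hypotheses with uniform $C^\infty$-bounds is indeed stable, as the paper notes citing \cite[p.88]{leebilinear}), and the remaining work is a variable-coefficient analogue of the Tao--Vargas $\veps$-removal lemma \cite[Lemma 2.4]{tv1}. Up to that point your sketch and the paper agree.

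However, your description of the $\veps$-removal mechanism itself does not reflect a valid argument and diverges substantively from the actual proof. The paper, following Tao--Vargas, establishes a \emph{weak-type} bound $|\{|T_1f_1\,T_2f_2|>\beta\}|\lesssim\l^{-d}\beta^{-p}$ and recovers the strong $L^r$ bound for $r>p$ by Marcinkiewicz, not by reassembling sub-ball contributions. The engine is a $TT^*$ argument: one studies the kernel $K(w,z)$ of $T_1T_1^*$ (decay $(1+\l|w-z|)^{-(d-1)/2}$ from nondegeneracy), splits $K=K^R+K_R$ at a \emph{single} carefully chosen scale $R$ depending on $\l^d|E|$, controls $K^R$ via a Stein-Tomas/H\"ormander $L^{2(d+1)/(d-1)}$ bound, and for $K_R$ uses a physical-space partition at scale $R$ together with a semiclassical Fourier localization which shows that each piece concentrates on a $(\l R)^{-1}$-neighborhood of the hypersurface $S^1_k=\{\nabla\phi_1(w_k,\xi)\}$. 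The same $TT^*$ step is then run once more for $T_2$, after which Lee's lossy estimate (or Tao's model bilinear estimate) is applied at the rescaled parameter $\l R^2$; the hypothesis \eqref{epsremhyp} is exactly what makes the bookkeeping close. In contrast, your proposal of ``iterating the rescaling'' over a geometric sequence of scales is not what happens --- there is one choice of $R$ --- and your proposed ``$L^2$ almost-orthogonality (Bessel)'' reassembly of sub-balls does not work as stated: the exponent range includes $p\in(\frac{d+2}{d},2)$ where one cannot add sub-ball contributions by Bessel. What actually replaces this step is precisely the combination of the weak-type reduction, the $TT^*$ kernel split, and the Stein-Tomas bound for the surfaces $S^i_k$, none of which appear in your sketch. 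The claim that the rescaled operator ``is again of Lee's type'' is the right idea (and is used), but only once and at the very end; it does not drive a multi-scale geometric series. So, while your plan identifies the right obstruction and correctly names the $\veps$-removal as the crux, the mechanism you propose for it is not viable, and the argument the paper actually carries out is substantially different in its structure.
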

We postpone the proof of this theorem until the next section.  It is then verified (see \cite[(3.14)]{leebilinear}) that if one takes $\xi = t$, $z=(x,s)$, $\tilde{\phi}(x,s,t) = \phi_1(x,s,t) = \phi_2(x,s,t)$ and $a_1$, $a_2$ as the amplitudes in $\widetilde{T}_1$, $\widetilde{T}_2$ respectively, then the left hand side of \eqref{leehyp} satisfies
$$
|\xi_1-\xi_2| + O(\veps_0) + O(2^{-j}) .
$$
Therefore since $|t_1-t_2| \approx 1$, the desired bound follows by taking \eqref{bilinearnoloss} with $\l$ replaced by $\l2^{-2j}$.

\begin{remark}
As a consequence of Theorem \ref{thm:leenoloss} and the almost orthogonality arguments in this section, we obtain the bound 
\begin{equation}\label{leelinear}
\|T h\|_{L^q(\RR^d)} \lesssim \l^{-\frac dq}\|h\|_{L^p(\RR^{d-1})} \qquad \text{when }q > \frac{2(d+2)}{d} \text{ and }\frac{d+1}{q} < \frac{d-1}{p'}
\end{equation}
for operators $T$ satisfying the hypotheses of Theorem \ref{thm:bilinear}.  In other words, we obtain Lee's estimate \cite[Theorem 1.3]{leebilinear} without the $\veps$-loss.  Indeed, the Whitney-type decomposition of $(Th)^2$ in the previous section is essentially the same as that in his work, and the estimate over the $(\nu,\nu') \in \Xi_{j_0}$ is treated on p. 85 there.  Since H\"older's inequality gives $\|h^j_\nu\|_{L^2(\RR^{d-1})} \lesssim 2^{-\frac{j(d-1)}{2} (\frac 12-\frac 1p)}\|h^j_\nu\|_{L^p(\RR^{d-1})}$, \eqref{hnubound} and the almost orthogonality arguments above yield the following variation on \eqref{bilinearjbound}
\begin{equation*}
\left\| \sum_{(\nu,\nu') \in \Xi_j} Th^j_\nu \;Th^j_{\nu'} \right\|_{L^{q/2}(\RR^d)} \lesssim 2^{j(\frac{2(d+1)}{q}-2(d-1)(1-\frac 1p))}\l^{-\frac{2d}{q}}\|h\|_{L^p(\RR^{d-1})}^2
\end{equation*}
(since it suffices to treat the cases where $q \geq p$).  Taking a sum in $j$ then yields \eqref{leelinear}.  

We also note that when $p=\infty$, the estimate in \eqref{leelinear} is valid for a larger range of $q$ by a recent work of Bourgain and Guth \cite{BG}.
\end{remark}

\section{The $\veps$-removal lemma}
Turning to the proof of \eqref{bilinearnoloss}, the estimate
\begin{equation}\label{bilinearloss}
\|T_1 f_1 T_2 f_2\|_{L^q(\RR^d)} \leq C_\alpha \l^{-\frac{d}{q}+\alpha}\|f_1\|_{L^2(\RR^{d-1})}\|f_2\|_{L^2(\RR^{d-1})}
\end{equation}
for arbitrary $\alpha >0$ and $\frac{d+2}{d}\leq q$ is due to Lee \cite[Theorem 1.1]{leebilinear}.  Moreover, as observed in \cite[p.88]{leebilinear}, the constant $C_\alpha$ is stable under small perturbations in $a_i$ and $\phi_i$.  In particular, if families of amplitudes and phase functions are considered and these functions are uniformly bounded in $C^\infty$ then $C_\alpha$ can be taken uniform within the family of operators.  The rest of this section will be dedicated to the following lemma, a generalization of \cite[Lemma 2.4]{tv1} which completes the proof of Theorem \ref{thm:leenoloss}.
\begin{lemma}
Suppose $T_1$, $T_2$ satisfy the hypotheses of the previous theorem and that they satisfy the estimate \eqref{bilinearloss} for some $1<q<\frac{d+1}{d-1}$ and some $\alpha >0$.  Assume further that
\begin{equation}\label{epsremhyp}
\frac 1p\left(1 + \frac{8\alpha}{d-1}\right) \leq \frac 1q + \frac{4\alpha}{d+1}.
\end{equation}
Then the scale-invariant estimate
$$
\|T_1 f_1 T_2 f_2\|_{L^r(\RR^d)} \lesssim \l^{-\frac{d}{r}}\|f_1\|_{L^2(\RR^{d-1})}\|f_2\|_{L^2(\RR^{d-1})}.
$$
is also valid for any $r>p$.
\end{lemma}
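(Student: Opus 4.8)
The plan is to transplant the $\veps$-removal scheme of Tao and Vargas \cite[Lemma~2.4]{tv1} (cf.\ also Bourgain \cite{bourgaincone}) to the present variable-coefficient setting, the one genuinely new point being that both the Carleson--Sj\"olin/transversality hypotheses of Theorem~\ref{thm:leenoloss} and the uniformity built into \eqref{bilinearloss} are stable under the parabolic rescaling that localizes the operators $T_i$ to small balls in $z$. Write $F=T_1f_1\,T_2f_2$ and normalize $\|f_1\|_{L^2}=\|f_2\|_{L^2}=1$. Since the amplitudes have compact support in $z$, the function $F$ lives on a fixed compact set $K$ of small measure and satisfies $\|F\|_{L^\infty}\lesssim 1$; hence, splitting the layer-cake integral at the level $\lambda^{-d/p}$ and using $r>p$ on the high part, the scale-invariant estimate for all $r>p$ follows once we prove the restricted weak-type bound
\[
\bigl|\{z:\ |F(z)|>\beta\}\bigr|\ \lesssim\ \beta^{-p}\lambda^{-d},\qquad \lambda^{-d/p}\le\beta\lesssim 1 .
\]
Moreover, because the phase of each $T_i$ is $\lambda\phi_i$ with $\phi_i$ smooth of bounded derivatives on a compact set, the $z$-Fourier transforms of $T_if_i$, hence of $F$, are concentrated in $\{|\zeta|\lesssim\lambda\}$, so $|F|$ is, up to rapidly decaying tails, constant on cubes of sidelength $\lambda^{-1}$. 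Replacing $F$ by its average over such a lattice, the task reduces to bounding by $\beta^{-p}$ the number $M$ of $\lambda^{-1}$-cubes on which $|F|\gtrsim\beta$.

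\textbf{Localized estimate via rescaling.} Fix a ball $B=B(z_0,\rho)$ with $\lambda^{-1}\le\rho\le\delta_0$ for a small fixed $\delta_0$. Substituting $z=z_0+\rho w$, absorbing the $\xi$-dependent unimodular factors $e^{i\lambda\phi_i(z_0,\xi)}$ into $f_i$, and factoring $\lambda\rho$ out of the remaining phase, the operator $f\mapsto (T_if)(z_0+\rho\,\cdot\,)$ becomes an oscillatory integral operator with large parameter $\lambda\rho$, phase $\nabla_z\phi_i(z_0,\xi)\cdot w+O(\rho)$ and amplitude $a_i(z_0+\rho w,\xi)$, which for $\delta_0$ small still satisfies the hypotheses of Theorem~\ref{thm:leenoloss} and lies in a family uniformly bounded in $C^\infty$, uniformly in $z_0$ and $\rho$. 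The stability of $C_\alpha$ recorded before the lemma then yields
\[
\|F\|_{L^q(B)}\ \lesssim\ \rho^{d/q}\,(\lambda\rho)^{-d/q+\alpha}\ =\ \lambda^{-d/q}\,(\lambda\rho)^{\alpha},
\]
so that localizing to a ball of radius $\rho$ gains a factor $(\lambda\rho)^{\alpha}$ over the global bound \eqref{bilinearloss} (for $\delta_0\le\rho\le 1$ one uses \eqref{bilinearloss} directly). Alongside this I will use the clean transverse bilinear estimate $\|F\|_{L^2(\RR^d)}\lesssim\lambda^{-d/2}$, a $TT^*$/Plancherel consequence of the transversality in \eqref{leehyp}.

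\textbf{Sparsification and summation.} Here one follows the combinatorial heart of \cite[Lemma~2.4]{tv1}: split the collection of $M$ cubes into $O(\log\lambda)$ subcollections, each of which is \emph{sparse} at some dyadic scale $\rho=\lambda^{-\theta}$ in the sense that every $\rho$-ball meets only $\lesssim(\lambda\rho)^{c\alpha}$ of its cubes while these cubes nonetheless aggregate into a controlled number of $\rho$-balls. On each sparse subcollection one interpolates the $\rho$-localized bound of the previous step against the clean $L^2$ bound: the $(\lambda\rho)^{\alpha}$ loss is defeated by the $L^2$ almost-orthogonality across the $\lambda^{-\theta}$-separated clusters, provided $\theta$, the sparseness exponent $c\alpha$, and the interpolation weights (the $L^2$ endpoint at $\tfrac12$, the lossy endpoint at $\tfrac1q$) are balanced; tracking these exponents produces exactly the admissibility relation \eqref{epsremhyp}, while the outer sum over the $O(\log\lambda)$ subcollections is harmless because $r>p$ strictly. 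Finally, the lemma is applied in the proof of Theorem~\ref{thm:leenoloss} by taking $q=\tfrac{d+2}{d}$, where \eqref{bilinearloss} holds for every $\alpha>0$, and $\alpha$ small enough that \eqref{epsremhyp} forces $p$ below any prescribed exponent $>\tfrac{d+2}{d}$.

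I expect the sparsification-and-summation step to be the main obstacle. One must run the Tao--Vargas pigeonholing with constants genuinely uniform in $\lambda$, replacing the exact Euclidean geometry of \cite{tv1} by its approximate counterpart valid at the $\lambda^{-1/2}$ wave-packet scale appropriate to the bent tubes of a variable-coefficient operator (here the almost-orthogonality arguments of \S\ref{thm:bilinear}'s proof and the $L^2\to\ell^2$ bound \eqref{l2L2dual} are the relevant substitutes), and one must arrange the interpolation between the $\rho$-localized $L^q$ bound and the global $L^2$ bound so that every loss collapses \emph{exactly} under \eqref{epsremhyp} rather than leaving a residual $\lambda^{\veps}$. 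By contrast, the opening reduction to a weak-type inequality and the verification that the rescaled phases and amplitudes obey the hypotheses of Theorem~\ref{thm:leenoloss} uniformly are essentially routine.
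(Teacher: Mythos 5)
Your opening reduction---restricted weak type, the a~priori upper and lower bounds on $|E|$, and the heuristic that $F=T_1f_1\,T_2f_2$ is morally locally constant at scale $\la^{-1}$---matches the paper, and your localized estimate $\|F\|_{L^q(B(z_0,\rho))}\lesssim\la^{-d/q}(\la\rho)^{\alpha}$, obtained by parabolic rescaling together with the stability of the constant $C_\alpha$ noted before the lemma, is correct; the paper performs the same rescaling, but only at the very last step. Beyond that point, though, your plan diverges sharply from the paper's. Following \cite[Lemma 2.4]{tv1}, the paper never forms a sparsification of $\la^{-1}$-cubes: it instead runs a $TT^*$ duality, reducing to $|\langle T_1T_1^*\widetilde F,\widetilde F\rangle|\lesssim\la^{-2d/p}|E|^{2/p'}$ with $\widetilde F=\ind_E T_2g_2\,F$, and splits the kernel $K$ of $T_1T_1^*$ (which obeys $|K(w,z)|\lesssim(1+\la|w-z|)^{-(d-1)/2}$) at a scale $R$ fixed in terms of $|E|$. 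The long-range part $K^R$ is controlled by this kernel decay together with an $L^1$ bound on $\widetilde F$ coming from Stein--Tomas applied to $T_2g_2$; the short-range part $K_R$ is localized to $R$-cubes, analyzed with the semiclassical Fourier transform, which concentrates the relevant multiplier near the hypersurfaces $S^i_k=\{\nabla\phi_i(w_k,\xi)\}$, and then the whole duality is iterated once more with the roles of $g_1,g_2$ reversed, after which \eqref{bilinearloss} is invoked at the rescaled parameter $\la R^2$. The stronger hypothesis \eqref{epsremhyp} (as opposed to \eqref{tvhyp}) is used there specifically to guarantee $\la R^2\le1$, which is what licenses that final rescaled application.

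The genuine gap in your proposal is the sparsification-and-summation step, which you yourself flag as the main obstacle but do not carry out. In Bourgain's linear $\veps$-removal the pigeonhole is closed by a Bessel-type orthogonality inequality for spatial localizations of a single extension operator; in the present bilinear, variable-coefficient setting the clean global bound $\|F\|_{L^2}\lesssim\la^{-d/2}$ does not by itself supply the almost-orthogonality across $\la^{-\theta}$-separated clusters that your interpolation step needs---one must manufacture it, and the paper does so via the kernel decay of $T_1T_1^*$ and the semiclassical concentration on $S^i_k$ rather than by pigeonholing cubes. You also assert, without bookkeeping, that tracking the exponents in the pigeonhole ``produces exactly'' \eqref{epsremhyp}; this is far from clear, and in particular it is not evident that your scheme would need the factor $8\alpha/(d-1)$ (rather than the weaker $4\alpha/(d-1)$ in \eqref{tvhyp}), since in the paper's argument that factor arises for a very specific reason, namely the requirement $\la R^2\le1$ in the short-range kernel analysis. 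Until the cluster orthogonality and the exponent arithmetic are actually supplied, this remains a plausible alternative blueprint rather than a proof.
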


The hypothesis \eqref{epsremhyp} is stronger than the one appearing in \cite{tv1} (corresponding to $\sigma=\frac{d-1}2$ there)
\begin{equation}\label{tvhyp}
\frac 1p\left(1 + \frac{4\alpha}{d-1}\right) < \frac 1q + \frac{2\alpha}{d+1},
\end{equation}
but is sufficient for our purposes.

Let $f_1$, $f_2$ be unit normalized functions in $L^2(\RR^{d-1})$.  By a Marcinkiewicz interpolation argument, it suffices to see 
that\footnote{One sees that this inequality implies the lossless bilinear inequalities for each exponent $r>p$  since if
$\omega(\beta)=|\{x: \, |T_1f_1(x)T_2f_2(x)|>\beta\}|$ then $\omega(\beta)=0$ for $\beta$ larger than a fixed constant and
$\int|T_1f_1T_2f_2|^r dx = r\int_0^\infty \beta^{r-1}\omega(\beta)\, d\beta$.}
$$
\left|\left\{x: \left|T_1f_1(x)T_2f_2(x)\right| > \beta \right\}\right| \lesssim \l^{-d}\beta^{-p}.
$$
Denote the set on the left by $E$.  Observe that since $\|T_1f_1T_2f_2\|_{\infty}\lesssim 1$, it suffices to assume that $\beta \lesssim 1$.  Hence we may assume that $|E| \gtrsim \l^{-d}$ throughout since the desired bounds are guaranteed otherwise.  Moreover, we know from \eqref{bilinearloss} and Tchebychev's inequality
$$
|E| \lesssim \l^{-d+q\alpha}\beta^{-q}.
$$
Consequently it suffices to assume that $\beta > \l^{-\frac{q\alpha}{p-q}}$.  This gives the a priori bound
\begin{equation}\label{aprioriE}
|E| \lesssim \l^{-d+q\alpha(1+\frac{q}{p-q})}.
\end{equation}

Since $\beta |E| \lesssim \|\ind_E T_1f_1T_2f_2 \|_{L^1} $, it suffices to show that
$$
\|\ind_E T_1f_1T_2f_2 \|_{L^1}  \lesssim \l^{-\frac dp} |E|^{\frac{1}{p'}}.
$$
We deduce this by showing that for any unit vectors $g_1$, $g_2$ in $L^2(\RR^{d-1})$
$$
\|\ind_E T_1g_1T_2g_2 \|_{L^1}  \lesssim \l^{-\frac dp} |E|^{\frac{1}{p'}},
$$
where it should be stressed that $E$ is dependent on $f_1,f_2$ above, but that $g_1$, $g_2$ are completely independent of these functions.  Fix $g_2$ and let $T=T_{E,g_2}$ be the linear operator $Tg_1 = \ind_E T_2 g_2 T_1 g_1$.  It suffices to show that
$$
\|T^* F\|_{L^2(\RR^{d-1})} \lesssim \l^{-\frac dp}|E|^{\frac{1}{p'}}\|F\|_{L^\infty(\RR^d)},
$$
since duality then implies that $\|T g_1\|_{L^1} \lesssim \l^{-\frac dp}|E|^{\frac{1}{p'}}$.  We may assume  $\|F\|_{L^\infty} \lesssim 1$.  Set $\widetilde{F} := \ind_E T_2g_2 F$.  By a duality argument, we square the left hand side of the previous inequality to see that it suffices to show that
\begin{equation}\label{t1star}
|\langle T_1 T_1^* \widetilde{F}, \widetilde{F} \rangle| \lesssim \l^{-\frac{2d}{p}}|E|^{\frac{2}{p'}}=\l^{-2d} \, (\l^d|E|)^{\frac2{p'}},
\end{equation}
where the inner product on the left is with respect to $L^2(\RR^d)$.  The integral kernel of $T_1 T_1^*$ is
$$
K(w,z) = \int e^{i\l(\phi_1(w,\xi) - \phi_1(z,\xi))} a_1(w,\xi) \overline{a_1(z,\xi)}\,d\xi.
$$
and satisfies estimates
$$
|K(w,z)| \lesssim (1+\l|w-z|)^{-\frac{d-1}{2}}.
$$
This bound follows from the invertibility of $\nabla^2_{\xi\xi}\prtl_s \phi_1$ when $w-z$ is inside a small cone about $(0,\dots,0,1)$.  Otherwise, stronger estimates result from integration by parts and the invertibility of $\nabla_{x\xi}\phi_1$.  We now let $R\geq\l^{-1}$ be a parameter to be determined shortly and write $K(w,z) = K^R(w,z) + K_R(w,z)$ where $K^R(w,z)$ is smoothly truncated to $|w-z| \geq R$ and $K_R(w,z)$ is supported in $|w-z| \leq 2R$.  Observe that by Stein's generalization of H\"ormander's variable coefficient oscillatory integral theorem (see \cite{steinbeijing} or \cite[Ch. 2]{soggefica})
$$
\|\widetilde{F}\|_{L^1(\RR^d)} \lesssim |E|^{\frac{d+3}{2(d+1)}} \|T_2 g_2\|_{L^{\frac{2(d+1)}{d-1}}(\RR^d)}\|F\|_{L^\infty(\RR^d)}
\lesssim |E|^{\frac{d+3}{2(d+1)}} \l^{-\frac{d(d-1)}{2(d+1)}}.
$$
Thus the contribution of $K^R$ to $\langle T_1 T_1^* \widetilde{F}, \widetilde{F} \rangle$ is bounded by
$$
(\l R)^{-\frac{d-1}{2}}|E|^{\frac{d+3}{d+1}} \l^{-\frac{d(d-1)}{d+1}} =(\l R)^{-\frac{d-1}{2}} (\l^d |E|)^{\frac{d+3}{d+1}}\l^{-2d}.
$$
It is now verified that taking
$$
R= \l^{-1}\left( \l^d |E| \right)^{\frac{2}{d-1}(\frac{d+3}{d+1}-\frac{2}{p'})} \geq \l^{-1},
$$
ensures that the contribution of $K^R$ is acceptable towards proving \eqref{t1star} (by scaling, this is consistent with the choice of $R$ in \cite[Lemma 2.4]{tv1}).  We also remark that another computation reveals that \eqref{aprioriE} along with the hypothesis \eqref{epsremhyp} ensures that $R \lesssim \l^{-\frac 12}$.

It remains to control the contribution of $K_R$ to (4.5). Let $\{\psi_k\}_k$ be a partition of unity over $[-\veps_0,\veps_0]^d$ such that $\supp(\psi_k)$ is contained in a cube of sidelength $2R$ centered at a point $w_k \in R\mathbb{Z}^d$.  Let $P_R$ be the operator determined by the integral kernel $K_R$ and observe that its contribution to the left hand side of \eqref{t1star} is dominated by
\begin{equation}\label{PRlocal}
\sum_{k,k'} |\langle P_R(\psi_k \widetilde{F}), \psi_{k'}\widetilde{F} \rangle |.
\end{equation}
Given a fixed $k$, the number of $k'$ for which $\langle P_R(\psi_k \widetilde{F}), \psi_{k'}\widetilde{F} \rangle \neq 0$ is $O(1)$ and satisfies $d(\supp(\psi_k),\supp(\psi_{k'})) \lesssim R$.  Hence we will restrict attention to the sum over the diagonal $k=k'$, as a slight adjustment of the argument below will handle the off-diagonal terms.

At this stage it will be convenient to use the semiclassical Fourier transform with $h = \frac{1}{\l}$ (cf. \cite[\S3.3]{zworski})
\begin{equation}\label{semiclassft}
\mathscr{F}_{1/\l}(G)(\eta) = \int_{\RR^d} e^{-i\l w\cdot \eta } G(w)\,dw, \qquad \mathscr{F}_{1/\l}^{-1}(g)(w) =\frac{\l^d}{(2\pi)^d}\int_{\RR^d} e^{i\l w\cdot \eta } G(\eta)\,d\eta.
\end{equation}
Since $\mathscr{F}_{1/\l}$ is related to the usual Fourier transform by $\mathscr{F}_{1/\l}(G)(\eta) = \mathscr{F}(G)(\lambda \eta)$, we have the Plancherel identity $(2\pi)^d \|G\|_{L^2}^2 = \l^d \|\mathscr{F}_{1/\l}(G)\|_{L^2}^2$ (cf. \cite[(3.3.6)]{zworski}).  We now have
\begin{equation}\label{PRlocalterms}
(2\pi)^d\langle P_R(\psi_k \widetilde{F}), \psi_{k}\widetilde{F} \rangle = \l^d\left\langle \mathscr{F}_{1/\l}\big(P_R(\psi_k \widetilde{F})\big), \mathscr{F}_{1/\l}\big(\psi_k \widetilde{F}\big) \right\rangle,
\end{equation}
and the right hand side can be written as
$$
\frac{1}{(2\pi)^d}\iint J_k(\eta,\zeta) \mathscr{F}_{1/\l}\big(\psi_k \widetilde{F}\big)(\zeta) \, \overline{\mathscr{F}_{1/\l}\big(\psi_k \widetilde{F}\big)(\eta)}\,d\zeta d\eta,
$$
where
$$
J_k(\eta,\zeta) = \l^{2d}\iiint e^{-i\l(\eta\cdot w - \phi_1(w,\xi) + \phi_1(z,\xi) - z\cdot \eta)} \widetilde{\psi}_k(z,w) a_1(w,\xi)\overline{a_1(z,\xi)}\,dz\,dw\,d\xi,
$$
for some $\widetilde{\psi}_k$ supported in $|z-w_k|,|w-w_k| \lesssim R$ satisfying $|\prtl^\alpha_{w,z}\widetilde{\psi}_k|\lesssim_\alpha R^{-|\alpha|}$.  Strictly speaking, one needs to justify the use of Fubini's theorem here, but this can be done by passing to Schwartz class approximations to $\widetilde{F}$ and employing crude $L^2$ continuity bounds for $P_R$.  Therefore over $\supp(\widetilde{\psi}_k)$,
$$
|\nabla_w\phi_1(w_k,\xi)-\nabla_w\phi_1(w,\xi)| + |\nabla_z\phi_1(w_k,\xi)-\nabla_z\phi_1(z,\xi)|\lesssim R \lesssim (\l R)^{-1},
$$
where we use that $R \lesssim \l^{-\frac 12}$ in the second inequality.  Hence integration by parts gives for any $N$ and some uniform cube $Q \subset \RR^{d-1}$
$$
|J_k(\eta,\zeta)| \lesssim_N (\l R)^{2d}\int_Q (1 + \l R|\eta-\nabla_w \phi_1(w_k,\xi)|+\l R|\zeta-\nabla_w \phi_1(w_k,\xi)| )^{-N}\,d\xi,
$$
as the domain of integration in $(w,z)$ is of volume $R^{2d}$.  Let $S^1_k$ denote the hypersurface $\{\nabla \phi_1(w_k,\xi): \xi \in Q \}$. This in turn allows us to deduce that
$$
|J_k(\eta,\zeta)| \lesssim_N (\l R)^{d+1} (1 + \l R\, d(\eta,S^1_k)+\l R \,d(\zeta,S^1_k) +\l R|\zeta-\eta| )^{-N}.
$$
Consequently, by using Cauchy-Schwarz in \eqref{PRlocalterms} we have that
$$
\left|\langle P_R(\psi_k \widetilde{F}), \psi_{k}\widetilde{F} \rangle \right| \lesssim_N \l R \int (1+\l R|d(\eta,S^1_{k})|)^{-N}|\mathscr{F}_{1/\l}\big(\psi_k \widetilde{F}\big)(\eta)|^2\,d\eta.
$$
Now let $S^1_{k,l}$ denote the $(\l R)^{-1} 2^l$ neighborhood of $S^1_k$.  We have that
$$
\sum_{k} |\langle P_R(\psi_k \widetilde{F}), \psi_{k}\widetilde{F} \rangle | \lesssim_N \sum_k \sum_{l=0}^\infty \l R 2^{-lN}\|\mathscr{F}_{1/\l}\big(\psi_k \widetilde{F}\big)\|_{L^2(S^1_{k,l})}^2.
$$
We examine the case $l=0$, the other cases are similar and aided by the factor of $2^{-lN}$.  Let $\{\tilde{g}_{1,k}\}_k$ be a sequence of functions with $\supp(\tilde{g}_{1,k})\subset S^1_{k,0}$ for each $k$ and $\sum_k \|\tilde{g}_{1,k}\|_{L^2(S^2_{k,0})}^2=1$.  To finish the proof and show that \eqref{PRlocalterms} is dominated by the right side of \eqref{t1star}, it suffices to see that
$$
\sum_k \left|\langle\psi_k \widetilde{F}, \l^{-d}\mathscr{F}_{1/\l}^{-1}(\tilde{g}_{1,k})\rangle\right| \lesssim \l^{-\frac{d}{p}} (\l R)^{-\frac 12}|E|^{\frac{1}{p'}}\|F\|_{L^\infty(\RR^d)},
$$
which in turn follows from
$$
\sum_k \int \left|\psi_k \ind_E \l^{-d}\mathscr{F}_{1/\l}^{-1}(\tilde{g}_{1,k})T_2g_2\right| \lesssim \l^{-\frac{d}{p}} (\l R)^{-\frac 12}|E|^{\frac{1}{p'}}.
$$
Now reverse the roles of $g_1$ and $g_2$ from the previous step, treating $\{\tilde{g}_{1,k}\}_k$ as a fixed sequence and redefine $T = T_{E,\{g_{1,k}\}}$ by $Tg_2 = \{\psi_k \ind_E \mathscr{F}_{1/\l}^{-1}(\tilde{g}_{1,k})T_2g_2 \}_k$ so that it suffices to show
$$
\|T\|_{L^2(\RR^{d-1}) \to l^1_k L^1(\RR^d)} \lesssim \l^{d-\frac{d}{p}} (\l R)^{-\frac 12}|E|^{\frac{1}{p'}}.
$$
Let $\{F_k\}_k$ be any sequence of functions satisfying $\sup_k\|F_k\|_{L^\infty(\RR^d)} \leq 1$ and set $\widetilde{F}_k = \psi_k \ind_E \l^{-d}\mathscr{F}_{1/\l}^{-1}(\tilde{g}_{1,k})F_k$. By duality, the desired bound on $T$ will follow from
$$
\left\|T^*\left(\sum_k F_k\right)\right\|_{L^2(\RR^{d-1})} \lesssim \l^{-\frac{d}{p}} (\l R)^{-\frac 12}|E|^{\frac{1}{p'}},
$$
or equivalently
$$
\sum_{k,k'}\langle T_2 T_2^*(\widetilde{F}_k), \widetilde{F}_{k'}\rangle \lesssim \left(\l^{-\frac{d}{p}} (\l R)^{-\frac 12}|E|^{\frac{1}{p'}}\right)^2 .
$$
Observe that
\begin{multline}\label{stprep}
\sum_k \|\widetilde{F}_k\|_{L^1(\RR^d)} \leq \sum_k \|F_k\|_{L^{\infty}(\RR^d)} \int |\psi_k \ind_E \l^{-d}\mathscr{F}_{1/\l}^{-1}(\tilde{g}_{1,k})|\,dw\\
\leq \left( \int_E \sum_k \psi_k^{\frac{2(d+1)}{d+3}}\right)^{\frac{d+3}{2(d+1)}} \|\l^{-d}\mathscr{F}_{1/\l}^{-1}(\tilde{g}_{1,k})\|_{\ell^{\frac{2(d+1)}{d-1}}_k L^{\frac{2(d+1)}{d-1}}(\RR^d)}.
\end{multline}
By finite overlap of the $\supp(\psi_k)$, the first factor on the right is seen to be bounded by $|E|^{\frac{d+3}{2(d+1)}}$. Similar to before, an application of the Stein-Tomas theorem for $S^1_k$ gives
$$
\|\l^{-d}\mathscr{F}_{1/\l}^{-1}(\tilde{g}_{1,k})\|_{\ell^{\frac{2(d+1)}{d-1}}_k L^{\frac{2(d+1)}{d-1}}(\RR^d)} \lesssim \l^{-\frac{d(d-1)}{2(d+1)}} (\l R)^{-\frac 12} \|\tilde{g}_{1,k}\|_{\ell^2_k L^{2}(S^1_{k,0})},
$$
(cf. the formula for $\mathscr{F}_{1/\l}^{-1}$ in \eqref{semiclassft}) where we use that $\ell^{2}_k \hookrightarrow \ell^{\frac{2(d+1)}{d-1}}_k$.  Decomposing the integral kernel of $T_2 T_2^*$ as a sum $K^R+K_R$ as before, we may handle the contribution of $K^R$ by using \eqref{stprep} to reason analogously to the argument above.  We are thus reduced to handling the contribution of $K_R$ and denote the corresponding operator as $P_R$. As before, we restrict attention to the diagonal terms, and are thus reduced to seeing that
$$
\sum_k \lambda^d \left|\langle \mathscr{F}_{1/\l}(P_R(\widetilde{F}_k)),\mathscr{F}_{1/\l}(\widetilde{F}_k) \rangle\right| \lesssim  \left(\l^{-\frac{d}{p}} (\l R)^{-\frac 12}|E|^{\frac{1}{p'}}\right)^2.
$$
Since $\supp(\widetilde{F}_k) \subset \supp(\psi_k)$, this analogously reduces to showing that
$$
\sum_k \sum_{l=0}^\infty 2^{-lN} \|\mathscr{F}_{1/\l}(\widetilde{F}_k) \|^2_{L^2(S^2_{k,l})}\lesssim  \left(\l^{-\frac{d}{p}} (\l R)^{-1}|E|^{\frac{1}{p'}}\right)^2,
$$
where this time $S^2_{k,l}$ denotes the $(\l R)^{-1}2^l$ neighborhood of the hypersurface $S^2_k=\{\nabla \phi_2(w_k,\xi): \xi \in Q \}$.  We again restrict attention to the $l=0$ case, and let $\{\tilde{g}_{2,k}\}_k$ be a sequence such that $\supp(\tilde{g}_{2,k}) \subset S^2_{k,0}$ and $\sum_k \|\tilde{g}_{2,k}\|_{L^2(S^2_{k,0})}^2 =1$.  Observe that
$$
\sum_k \left|\langle\widetilde{F}_k, \l^{-d}\mathscr{F}_{1/\l}^{-1}(\tilde{g}_{2,k})\rangle\right| \lesssim \sum_k \|F_k\|_{L^\infty(\RR^d)}\int |\l^{-2d}\psi_k\ind_E\mathscr{F}_{1/\l}^{-1}(\tilde{g}_{1,k})\mathscr{F}_{1/\l}^{-1}(\tilde{g}_{2,k})|\,dw,
$$
and it suffices to show that the right hand side is bounded by $\l^{-\frac{d}{p}} (\l R)^{-1}|E|^{\frac{1}{p'}}$. But each term on the right is bounded by
$$
|E|^{\frac{1}{q'}}
\left(\int |\l^{-2d}\psi_k\mathscr{F}_{1/\l}^{-1}(\tilde{g}_{1,k})\mathscr{F}_{1/\l}^{-1}(\tilde{g}_{2,k})|^q\,dw \right)^{\frac 1q}.
$$
Rescaling $w \mapsto R w$ and applying the bilinear estimates \eqref{bilinearloss} (or even those in \cite{taobilinear}) shows the preceding term is bounded by
$$
|E|^{\frac{1}{q'}}R^{\frac dq}(\l R)^{-\frac{d}{q}+\alpha} (\l R)^{-1}\|\tilde{g}_{1,k}\|_{L^2(S^1_{k,0})}\|\tilde{g}_{2,k}\|_{L^2(S^2_{k,0})}.
$$
Taking the sum in $k$ and applying Cauchy-Schwarz completes the proof once we observe that 
$$
|E|^{\frac{1}{q'}}\l^{-\frac{d}{q}+\alpha}R^\alpha \lesssim |E|^{\frac{1}{p'}} \l^{-\frac{d}{p}} .
$$
Recalling that $R\approx \l^{-1}\left( \l^d |E| \right)^{\frac{2}{d-1}(\frac{d+3}{d+1}-\frac{2}{p'})} = \l^{-1}\left( \l^d |E| \right)^{\frac{2}{d-1}(\frac 2p-\frac{d-1}{d+1})} $ this inequality is equivalent to
$$
|E|^{-\frac{1}{q}}\l^{-\frac{d}{q}}\left( \l^d |E| \right)^{\frac{2\alpha}{d-1}(\frac 2p-\frac{d-1}{d+1})} \lesssim |E|^{-\frac{1}{p}} \l^{-\frac{d}{p}},
$$
which in turn can be rearranged as
$$
\left( \l^d |E| \right)^{\frac{2\alpha}{d-1}(\frac 2p-\frac{d-1}{d+1})} \lesssim \left( \l^d |E| \right)^{\frac 1q-\frac 1p} .
$$
But since $\l^{d}|E| \gtrsim 1$, this follows once it is observed that \eqref{tvhyp} is equivalent to 
$$
\frac{2\alpha}{d-1}\left(\frac 2p-\frac{d-1}{d+1}\right) < \frac 1q-\frac 1p.
$$
Even though we only need the weaker condition \eqref{tvhyp} to conclude the argument, the stronger hypothesis \eqref{epsremhyp} is used above in a significant way to ensure that $\lambda R^2 \leq 1$.

\section{$L^2$ estimates for shrinking tubes}
By Corollary~\ref{thm:maincorollary}, \eqref{i.1.1} follows from \eqref{i.1}.
Therefore, if , as before, $\varPi$ denotes the space of unit length geodesics, we must show that if $(M,g)$ has nonpositive sectional curvature, then if $\e>0$ is fixed there is a $\Lambda_\e<\infty$ so that
\begin{equation}\label{p.1}
\int_{\tubel}|e_\la|^2 \, dx\le \e, \quad \la\ge \Lambda_\e,  \, \, \, \gamma\in \varPi.
\end{equation}
Here, as before, we are denoting the volume element associated to the metric simply by $dx$.

We shall first fix $\gamma\in \varPi$ and prove the special case
\begin{equation}\label{p.2}
\int_{\tubel} |e_\la|^2 \, dx \le \e, \quad \la \ge \Lambda_\e.
\end{equation}
After doing this we shall see that we can adapt its proof using the compactness of $\varPi$ to obtain the estimates \eqref{p.1} which are uniform as $\gamma$ ranges over this space.

To prove these estimates, we shall want to use a reproducing operator which is similar to the local one, $\chi_\la$, that was used to prove
Theorem~\ref{thm:mainthm}.  This operator was a local one, but to able to take advantage of our curvature assumptions and make use of the method
of time-averaging, it will be convenient to use a variant that is in effect scaled in the spectral parameter.

To this end, let us fix a real-valued function $\rho\in {\mathcal S}(\R)$ satisfying
\begin{equation}\label{p.3}
\rho(0)=1, \quad \Hat \rho(t)=0 \, \, \text{if } \, \, |t|\ge \frac14 \, 
\, \, \text{and } \, \, \Hat \rho(t)=\Hat \rho(-t).
\end{equation}
Then for a given fixed $T\gg 1$ we have
$$\rho(T(\la -\sqrtl))e_\la = e_\la.$$
As a result, we would have \eqref{p.2} if we could show that there is a uniform constant $A=A(M,g)$ so that whenever $T\gg 1$ is fixed there is a constant
$A_T<\infty$ so that for $\la\ge 1$ we have
\begin{equation}\label{p.4}
\bigl\| \rho(T(\la-\sqrtl))f\bigl\|_{L^2(\tubel)}\le \Bigl(AT^{-\frac14}+A_T\la^{-\frac18}\Bigr)\, \|f\|_{L^2(M)}.
\end{equation}
Since $\rho(T(\la-\sqrtl)): L^2(M)\to L^2(M)$ is self-adjoint, by duality, \eqref{p.4} is equivalent to the following
\begin{equation}\label{p.5}
\bigl\| \rho(T(\la-\sqrtl))h\bigl\|_{L^2(M)}\le \Bigl(AT^{-\frac14}+A_T\la^{-\frac18}\Bigr)\, \|h\|_{L^2(M)},
\quad \text{if  supp }h\subset \tubel.
\end{equation}
If we now let
\begin{equation}\label{p.6}
m(\tau)=\bigl(\rho(\tau)\bigr)^2,
\end{equation}
we can square the right side of \eqref{p.5} to see that whenever $h$ is supported in the tube $\tubel$ we have
\begin{multline*}\bigl\|\rho(T(\la-\sqrtl))h\bigr\|^2_{L^2(M)}=\bigl\langle m(T(\la-\sqrtl))h, h\bigr\rangle
\\
\le \bigl\|m(T(\la-\sqrtl))h\|_{L^2(\tubel)}\|h\|_{L^2(\tubel)}.
\end{multline*}
Whence we deduce that our desired inequalities \eqref{p.2}, \eqref{p.4} and \eqref{p.5} would all follow from
\begin{multline}\label{p.7}
\bigl\| m(T(\la-\sqrtl))h\bigl\|_{L^2(\tubel)}
\\
\le \Bigl(CT^{-\frac12}+C_T\la^{-\frac14}\Bigr)\, \|h\|_{L^2(M)},
\quad \text{if  supp }h\subset \tubel,
\end{multline}
with $C$ and $C_T$ being equal to $A^2$ and $A_T^2$, respectively.

Since, by \eqref{p.3},
$$\Hat m(\tau)=(2\pi)^{-1}\bigl(\Hat \rho * \Hat \rho\bigr)(\tau)$$
is supported in $|\tau|<1$, we can write
$$m(T(\la-\sqrtl))=\frac1{2\pi T}\int_{-T}^T \Hat m(\tau/T) e^{i\la \tau}e^{-i\tau \sqrtl}\, d\tau.$$
After perhaps multiplying the metric, we may assume that the injectivity radius of the manifold is larger than $10$.  Let us then fix an even bump function
$\beta \in C^\infty_0(\R)$ satisfying
$$\beta(\tau)=1, \, \,  |\tau|\le \frac32, \quad \text{and } \, \, \beta(\tau)=0, \, \, |\tau|\ge 2.$$
We then can split
$$m(T(\la-\sqrtl))=R_\la + W_\la$$
where (suppressing the $T$-dependence)
$$W_\la =\frac1{2\pi T}\int_{-T}^T (1-\beta(\tau)) \Hat m(\tau/T) e^{i\la \tau}e^{-i\tau \sqrtl}\, d\tau,$$
and, if $r_T(\tau)$ denotes the inverse Fourier transform of $\tau \to \beta(\tau)\Hat m(\tau/T)$,
$$R_\la h= T^{-1}r_T(\la-\sqrtl)h.$$
Clearly, $|r_T(\tau)|\le B$ for some $B$ independent of $T\ge 1$, and therefore,
$$\|R_\la f\|_{L^2(M)}\le BT^{-1}\|f\|_{L^2(M)}, \quad T\ge 1.$$
As a result, we would obtain \eqref{p.7} if we could show that
\begin{equation}\label{p.8}
\|W_\la h\|_{L^2(\tubel)}\le \Bigl(CT^{-\frac12}+C_T\la^{-\frac14}\Bigr)\|h\|_{L^2}, \quad \text{if  supp }h\subset \tubel.
\end{equation}
By Euler's formula, if $\tilde m_T$ denotes the inverse Fourier transform of $T^{-1}(1-\beta(\tau)) \, \Hat m(\tau/T)$, we have
$$W_\la =\frac1{2\pi T}\int_{-T}^T (1-\beta(\tau)) \Hat m(\tau/T) e^{i\la\tau}\cos(\tau\sqrtl)\, d\tau + \tilde m_T(\la+\sqrtl).$$
Since $\tilde m_T(\la +\sqrtl)$ has a kernel which, for $T\ge1$, is $O_T((1+\la)^{-N})$ for every $N=1,2,\dots$ as $\tilde m_T\in {\mathcal S}(\R)$,
we conclude that we would obtain \eqref{p.8} if we could prove that
\begin{equation}\label{p.9}
\|S_\la h\|_{L^2(\tubel)}\le \Bigl(CT^{-\frac12}+C_T\la^{-\frac14}\Bigr)\|h\|_{L^2}, \quad \text{if  supp }h\subset \tubel,
\end{equation}
with
\begin{equation}\label{p.10}
S_\la =\frac1{2\pi T}\int_{-T}^T (1-\beta(\tau))\Hat m(\tau/T) e^{i\la \tau} \cos(\tau\sqrtl) \, d\tau.
\end{equation}

It is at this point that we shall finally use our hypothesis that $(M,g)$ has nonpositive sectional curvature.  By the Cartan-Hadamard theorem
(see \cite{Chavel}, \cite{do Carmo}), for each point $P\in M$, the exponential map at $P$, $\exp_P$, sending $T_PM$, the tangent space
at $P$, to $M$ is a universal covering map.  For our sake, it is natural to take $P$ to be the center of our unit-length geodesic segment
$\gamma$.  Thus, with this choice, if we identify $\Rd$ with $T_PM$, we have that
\begin{equation}\label{p.11}
\kappa=\exp_P: \Rd \simeq T_PM \to M
\end{equation}
is a covering map.

If $\tilde g=\kappa^*g$ denotes the pullback via $\kappa$ of the metric $g$ to $\Rd$, it follows that $\kappa$ is a local isometry.  We let
$\dgt(y,z)$ denote the Riemannian distance with respect to $\tg$ of $y,z\in \Rd$.  By the Cartan-Hadamard theorem there are no conjugate
points for either $(M,g)$ or $(\Rd, \tg)$.
Also, the image under $\kappa$ of any geodesic in $(\Rd, \tilde g)$ is one in $(M,g)$.  If $\{\gamma(t): \, t\in \R\}$ denotes the parameterization by
arc length of the extension of our geodesic segment $\gamma\in \varPi$, let $\tilde \gamma =\{\tilde \gamma(t): \, t\in \R\}$ denote the lift of this extension, which is the unique geodesic in $(\Rd, \tilde g)$ that passes through the origin and satisfies $\kappa(\tilde \gamma(t))=\gamma(t)$, $t\in \R$.

Next we recall that the deck transforms are the set of diffeomorphisms $\alpha: \Rd\to \Rd$ for which
$$\kappa \circ \alpha = \kappa.$$
The collection of these maps form a group $\Gamma$.  Since $\alpha^*\tg = \tg$, $\alpha \in \Gamma$, any deck transform preserves angles and
distances.  Consequently, the image of any geodesic in $(\Rd,\tg)$ under a deck transform is also a geodesic in this space.  As a result, the collection
of all $\alpha \in \Gamma$ for which $\alpha(\tilde \gamma)=\tilde \gamma$ is a subgroup of $\Gamma$, which is called the stabilizer subgroup
of $\tilde \gamma$ that we denote by $\Stab$.  If $\{\gamma(t): \, t\in \R\}$ is not a periodic geodesic, i.e., if there is no $t_0>0$ so that
$\gamma(t+t_0)=\gamma(t)$, $t\in \R$, then $\Stab$ is just the identity element in $\Gamma$.  If the extension of $\gamma\in \varPi$ is periodic with
minimal period $t_0>0$ then $\Stab$ is a cyclic subgroup which we can write as $\{\alpha_\ell: \, \ell \in {\mathbb Z}\}$, where
$\alpha_\ell$ is determined by $\alpha_\ell(\tilde \gamma(t))=\tilde \gamma(t+\ell t_0)$, $\ell =0, \pm 1, \pm 2, \dots$.  Thus, restricted to $\tilde \gamma$,
$\alpha_\ell$ just involves shifting the geodesic $\tilde \gamma(t)$ by $\ell$ times its period, and $\Stab$ is generated by $\alpha_1$.

Next, let
$$D_{Dir}=\{ \tilde y\in \Rd: \, d_{\tg}(0,\tilde y)<\dgt(0, \alpha(\tilde y)), \, \forall \alpha \in \Gamma, \, \, \alpha \ne Identity\}$$
be the Dirichlet domain for $(\Rd,\tg)$.  We can then add to $D_{Dir}$ a subset of $\partial D_{Dir}=\overline{D_{Dir}}\backslash
\text{Int }(D_{Dir})$ to obtain a natural fundamental domain $D$, which has the property that $\Rd$ is the disjoint union of the
$\alpha(D)$ as $\alpha$ ranges over $\Gamma$ and $\{\tilde y\in \Rd: \, \dgt(0,\tilde y)<10\} \subset D$ since we are assuming that the injectivity
radius of $(M,g)$ is more than ten.

Given $x\in M$, let $\tilde x\in D$ be the unique point in our fundamental domain for which $\kappa( \tilde x)=x$.  We then have (see e.g.  \cite[\S 3.6]{soggehang}) that the kernel of $\cos(\tau\sqrtl)$ can be written as
\begin{equation}\label{p.12}
\bigl(\cos(\tau\sqrtl)\bigr)(x,y)=\sum_{\alpha \in \Gamma}\bigl(\cos \tau\sqrtg\bigr)(\tilde x, \alpha(\tilde y)),
\end{equation}
where $\cos (\tau\sqrtg ): \, L^2(\Rd,\tilde g)\to L^2(\Rd, \tg)$ is the cosine transform associated with $\tg$.
Thus, if $dV_{\tilde g}$ is the associated volume element, we have that when $f\in C^\infty_0(\Rd)$
$$u(\tau, \tilde x)=\int_{\Rd}\bigl(\cos \tau \sqrtg\bigr)(\tilde x, \tilde z) \, f(\tilde z)\, dV_{\tilde g}(\tilde z)$$
is the solution of the Cauchy problem (with $D_\tau = -i \prtl_t$)
$$(D_\tau^2-\Delta_{\tg}) u=0, \quad u|_{\tau=0}=f, \quad \partial_\tau u|_{\tau=0}=0.$$
Therefore, by Huygens principle,
\begin{equation}\label{p.13}
\bigl(\cos\tau\sqrtg\bigr)(\tilde x,\tilde z)=0 \quad \text{if } \, \, \dgt(\tilde x,\tilde z)>|\tau|.
\end{equation}
Also, this kernel is smooth when $\dgt(\tilde x,\tilde z)\ne |\tau|$, i.e.,
\begin{equation}\label{p.14}
\text{sing supp } \bigl(\cos \tau \sqrtg\bigr)(\, \cdot \, ,\, \cdot \, )
\subset \{(\tilde x,\tilde z)\in \Rd\times \Rd: \, \dgt(\tilde x,\tilde z)=|\tau|\}.
\end{equation}

To proceed, we need a result which follows from the Hadamard parametrix and stationary phase:

\begin{lemma}\label{lemmap}  Let $m$ be as in \eqref{p.3} and \eqref{p.6}, and, as above, assume that $\beta\in C^\infty_0(\R)$ satisfies
$\beta(\tau)=1$, $|\tau|<\frac32$ and $\beta(\tau)=0$, $|\tau|\ge 2$.  Then if $\la, T\ge1$ and $\tilde x, \tilde y\in \Rd$, we have
\begin{multline}\label{p.15}
\frac1{2\pi T}\int_{-T}^T (1-\beta(\tau))\, \Hat m(\tau/T) e^{i\la \tau} \, \bigl(\cos \tau \sqrtg \bigr)(\tilde x, \tilde y) \, d\tau
\\
=\rho(x,y)\frac{\la^{\frac{d-1}2}}{T} \sum_\pm a_\pm(\la,T; \dgt(\tilde x,\tilde y)) \, e^{\pm i\la \dgt(\tilde x, \tilde y)} \, + \, R(\la,T, \tilde x,\tilde y),
\end{multline}
where
$\rho \in L^\infty(\Rd \times \Rd)\cap C^\infty(\Rd \times \Rd)$,
\begin{equation}\label{p.16}
a_\pm(\la,T;r)=0, \, r\notin [1,T], \quad \bigl| \partial^j_r a_\pm(\la,T;r)\bigr|\le C_j r^{-\frac{d-1}2-j},
\end{equation}
with constants $C_j$ independent of $T,\la\ge 1$, and
\begin{multline}\label{p.17}
R(\la,T; \tilde x,\tilde y)=0 \, \, \text{if } \, \dgt(\tilde x,\tilde y)>T,
\\
 \text{and }\,
|R(\la,T; \tilde x,\tilde y)|\le C_{T,K}\la^{-2-\frac{d-1}2}, \, \, \text{if } \, \tilde x, \tilde y
\in K\Subset \Rd.
\end{multline}
\end{lemma}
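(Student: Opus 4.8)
The plan is to insert the Hadamard parametrix for the wave propagator on the universal cover into the $\tau$-integral on the left of \eqref{p.15} and then to run a sequence of stationary-phase reductions. First I would dispose of the elementary assertions by finite propagation speed: \eqref{p.13} says $(\cos\tau\sqrtg)(\tilde x,\tilde y)$ vanishes for $|\tau|<\dgt(\tilde x,\tilde y)$, and since $\Hat m(\tau/T)$ is supported in $|\tau|\le T/2$, the whole integrand vanishes once $\dgt(\tilde x,\tilde y)>T$; this is the vanishing statement in \eqref{p.17} and forces the amplitudes $a_\pm$ to be supported in $r\le T$. Because $(\Rd,\tg)$ has no conjugate points (Cartan--Hadamard), the Hadamard parametrix for $\cos\tau\sqrtg$ holds \emph{globally}; for $|\tau|\lesssim T$ and $\tilde x,\tilde y$ in a fixed compact $K$, it writes $(\cos\tau\sqrtg)(\tilde x,\tilde y)$ as $(2\pi)^{-d}\sum_{\nu\le N}\int_{\Rd}\cos(\tau|\xi|)\,e^{i\langle\Psi(\tilde x,\tilde y),\xi\rangle}\,u_\nu(\tilde x,\tilde y)\,|\xi|^{-\nu}\,b(\xi)\,d\xi$ modulo a remainder $W_N(\tau,\tilde x,\tilde y)$ that is $C^{s(N)}$ with norms $\lesssim_{N,T,K}1$ (the $T$-dependence absorbing the polynomial-in-time growth of the parametrix error) and, again by Huygens, supported where $|\tau|\gtrsim\dgt(\tilde x,\tilde y)$. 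Here $b$ cuts $|\xi|$ off away from $0$, the phase is taken in Riemannian normal coordinates at $\tilde y$ so that $|\Psi(\tilde x,\tilde y)|=\dgt(\tilde x,\tilde y)$, and $u_0$ is the principal Hadamard coefficient --- smooth, and in nonpositive curvature bounded, since the Jacobian of $\exp_{\tilde y}$ is $\ge1$ by comparison; this is the prefactor $\rho\in L^\infty(\Rd\times\Rd)\cap C^\infty$.

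Next I would integrate out $\tau$. Writing $\cos(\tau|\xi|)=\hf(e^{i\tau|\xi|}+e^{-i\tau|\xi|})$ and using $\tfrac1{2\pi T}\int\Hat m(\tau/T)e^{i(\la\pm|\xi|)\tau}\,d\tau=m(T(\la\pm|\xi|))$, the $\tau$-integral against $\Hat m(\tau/T)e^{i\la\tau}$ equals $\hf[m(T(\la-|\xi|))+m(T(\la+|\xi|))]$ minus the contribution of $\beta(\tau)\Hat m(\tau/T)$; the latter, being a smooth compactly supported function of $\tau$ with derivatives bounded uniformly in $T\ge1$, yields after repeated integration by parts only $O_N\big(T^{-1}(1+|\la-|\xi||)^{-N}+T^{-1}(1+|\la+|\xi||)^{-N}\big)$. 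Since $\la,|\xi|>0$, $T\ge1$, every term carrying $\la+|\xi|$ is $O_N((1+\la)^{-N}(1+|\xi|)^{-N})$ and, after the $\xi$-integration, goes into $R$ with the claimed bound; the $W_N$ contribution is treated the same way (integrate by parts in $\tau$ against $e^{i\la\tau}$, use the $C^{s(N)}$ bounds). What survives, for each $\nu$, is $u_\nu(\tilde x,\tilde y)$ times an oscillatory integral in $\xi$ whose density is smooth, concentrated in $|\xi|=\sigma$ within an $O(T^{-1})$-window of $\sigma=\la$, and rapidly decreasing away from it.

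The core of the proof is stationary phase in $\xi=\sigma\omega$. The angular integral $\int_{\sph^{d-1}}e^{i\sigma\langle\Psi,\omega\rangle}\,d\omega$ has nondegenerate critical points at $\omega=\pm\Psi/|\Psi|$ --- applicable because $\sigma\approx\la\gg1$ on the density's support and, on the part of the range where the singularity survives the $(1-\beta)$-cutoff, $r=\dgt(\tilde x,\tilde y)\gtrsim1$ --- so stationary phase produces $(\sigma r)^{-(d-1)/2}\sum_\pm c_\pm e^{\pm i\sigma r}\big(1+O((\sigma r)^{-1})\big)$. Substituting $\sigma=\la+T^{-1}w$ to resolve the frequency window, and extracting $\la^{(d-1)/2}$, $e^{\pm i\la r}$, the factor $1/T$, and the $r^{-(d-1)/2}$ from the angular Hessian, the $\nu=0$, leading-order contribution collapses to $\rho(x,y)\tfrac{\la^{(d-1)/2}}{T}\sum_\pm a_\pm(\la,T;r)e^{\pm i\la r}$ with $\rho=u_0$ and $a_\pm$ given by a $w$-integral of $m(w)$ against $e^{\pm iwr/T}$ times harmless symbolic factors. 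This $a_\pm$ is a smooth function of $(\la,T;r)$, a classical symbol of order $0$ in $\la$; it is supported in $r\le T/2$ because $\int m(w)e^{\pm iwr/T}\,dw$ is a multiple of the Fourier transform of $m$ at $\mp r/T$, which vanishes for $|r/T|\ge\tfrac12$ since $\Hat m=(2\pi)^{-1}\Hat\rho*\Hat\rho$ is supported in $|\cdot|\le\tfrac12$ by \eqref{p.3}; and it obeys $|\partial_r^j a_\pm|\lesssim_j r^{-(d-1)/2-j}$, the factor $r^{-(d-1)/2}$ being explicit and each $\partial_r$ on the surviving $\Hat m(r/T)$- and $(\la r)^{-1}$-type factors costing only $T^{-1}\lesssim1$ or $r^{-1}$. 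All remaining contributions --- the terms $\nu\ge1$, the sub-principal angular terms, $W_N$, and the elliptic $\la+|\xi|$ pieces --- are collected into $R$; using their supports (Huygens gives $R=0$ for $r>T$) and the powers of $\la^{-1}$ picked up at each order of the two expansions, together with $r\sim1$ on $K$, and taking $N$ large, one obtains $|R|\le C_{T,K}\la^{-2-(d-1)/2}$. The region $r\lesssim1$, where $(1-\beta)$ kills the conormal singularity outright, is handled separately by plain integration by parts in $\tau$ and also falls within this remainder bound.

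I expect the main obstacle to be the bookkeeping in this last reduction: one must carry out three nested stationary-phase expansions --- in $\tau$, in $\omega\in\sph^{d-1}$, and in the radial $\sigma$ --- keeping every constant uniform in $\la$ while allowing polynomial growth in $T$, confirm that the principal part collapses to precisely the product form $\rho(x,y)\,a_\pm(\la,T;r)$ with the support and symbol bounds of \eqref{p.16}, and verify the quantitative estimate \eqref{p.17} for each discarded term (here one has to be careful about how the sub-principal parts of the Hadamard and angular expansions are apportioned between $a_\pm$ and $R$). The nonpositive curvature hypothesis enters exactly here: via Cartan--Hadamard it yields the absence of conjugate points on $(\Rd,\tg)$, which is what makes the Hadamard parametrix available on all of $\Rd$ and the entire construction uniform on compact sets.
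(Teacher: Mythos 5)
Your proposal is correct and follows essentially the same route as the paper's proof: finite propagation speed for the support statement, the global Hadamard parametrix (valid on $(\Rd,\tg)$ by Cartan--Hadamard), the Fourier identity converting the $\tau$-integral against $\Hat m(\tau/T)e^{i\la\tau}$ into $m(T(\la\mp|\xi|))$, and the asymptotics of the spherical Fourier transform to produce the $r^{-(d-1)/2}e^{\pm i\la r}$ factors. The only cosmetic difference is the order of operations and the use of honest stationary phase in $\omega$ and $\sigma$ where the paper simply quotes the exact Bessel-type formula \eqref{p.33} for $\widehat{d\sigma}$ and performs the angular $\xi$-integral first.
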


This lemma is standard and can essentially be found in \cite{ChenS}, \cite{SZ4} or \cite[\S 3.6]{soggehang}.  So let us postpone its proof to the end of the section and focus now on using it to help us  to prove \eqref{p.9}.

If we combine \eqref{p.10} and \eqref{p.12}, we can write the kernel of our operator as
\begin{equation}\label{p.18}
S_\la(x,y)=\frac1{2\pi T}\sum_{\alpha\in \Gamma} \int_{-T}^T (1-\beta(\tau))\, \Hat m(\tau/T) \, e^{i\la \tau}\,
\bigl(\cos \tau \sqrtg\bigr)(\tilde x, \alpha(\tilde y))\, d\tau,
\end{equation}
with, as in \eqref{p.12}, $\tilde x$ and $\tilde y$ being the unique points in our fundamental domain having the property that
$x=\kappa(\tilde x)$ and $y=\kappa(\tilde y)$, respectively.

In view of \eqref{p.13} the number of nonzero summands in \eqref{p.18} is finite, but, if the sectional curvatures of $(M,g)$ are strictly negative,
the number of such terms grows exponentially with $T$.  Therefore, as in \cite{SZ4} and \cite{ChenS}, it is convenient and natural to split the
sum into the terms in the stabilizer group for $\tilde \gamma$ and everything else.  So let us write
\begin{equation}\label{p.19}
S_\la(x,y)=\Sstab(x,y)+\Sosc(x,y),
\end{equation}
where
\begin{equation}\label{p.20}
\Sstab(x,y)=\frac1{2\pi T}\sum_{\alpha\in \Stab} \int_{-T}^T (1-\beta(\tau))\, \Hat m(\tau/T) \, e^{i\la \tau}\,
\bigl(\cos \tau \sqrtg\bigr)(\tilde x, \alpha(\tilde y))\, d\tau,
\end{equation}
and
\begin{equation}\label{p.21}
\Sosc(x,y)=\frac1{2\pi T}\sum_{\alpha\in \Gamma\backslash \Stab} \int_{-T}^T (1-\beta(\tau))\, \Hat m(\tau/T) \, e^{i\la \tau}\,
\bigl(\cos \tau \sqrtg\bigr)(\tilde x, \alpha(\tilde y))\, d\tau,
\end{equation}
We shall also call the operator associated with the second term in the right side of \eqref{p.19} $\Sosc$ since we shall be able to use oscillatory integral
operator bounds to control it.

The other piece is very easy to estimate.  We claim that
\begin{equation}\label{p.22}
\|\Sstab h\|_{L^2(\tubel)}\le \Bigl(CT^{-\frac12}+C_T\la^{-2}\Bigr)\|h\|_{L^2}, \quad \text{if supp }h\subset \tubel.
\end{equation}
By Young's inequality, this would be a consequence of the following estimate for the kernel
\begin{equation}\label{p.23}
|\Sstab(x,y)|\le CT^{-\frac12}\la^{\frac{d-1}2}+C_T\la^{-2+\frac{d-1}2},
\end{equation}
since we may restrict to $(x,y) \in \tubel \times \tubel$. If  our $\gamma\in \varPi$ is not a segment of a periodic geodesic in $(M,g)$ then $\Stab$ is just the identity element, in which case \eqref{p.23}
follows trivially from Lemma~\ref{lemmap}.  Otherwise, if the geodesic has period $t_0>0$ then as noted before $\Stab=\{\alpha_\ell\}_{\ell \in {\mathbb Z}}$
where $\alpha_\ell(\tilde \gamma(t))=\tilde \gamma(t+\ell t_0)$.  Since $\dgt(\alpha(\tilde w),\alpha(\tilde z))$ is uniformly bounded as $\tilde w$ and $\tilde z$ range over
$D$ and  $\alpha$ over $ \Gamma$, Lemma~\ref{lemmap} also yields, in this case,
\begin{equation}\label{p.24}
|\Sstab(x,y)|\le CT^{-1}\sum_{1\le \ell t_0\le 2T} \la^{\frac{d-1}2}(1+\ell)^{-\frac{d-1}2}+C_T\la^{\frac{d-1}2-2},
\end{equation}
using \eqref{p.16} (with $j=0$) to obtain the first term in the right and \eqref{p.17} to obtain the other term.  Since $d\ge 2$, \eqref{p.24} implies
\eqref{p.23}.  For later use, note that, since the period $t_0$ must be larger than $10$, in view of our assumption on the injectivity radius of
$(M,g)$, the constants in \eqref{p.22} can be chosen to be independent of $\gamma\in \varPi$.

In view of \eqref{p.22}, the proof of \eqref{p.9} would be complete if we could show that
\begin{equation}\label{p.25}
\|\Sosc h\|_{L^2(\tubel)}\le C_T\la^{-\frac14}\|h\|_{L^2}, \quad \text{if supp }h\subset \tubel.
\end{equation}
By Lemma~\ref{lemmap},
\begin{multline}\label{p.26}
\Sosc(x,y)=\rho(x,y)\frac{{\la^{\frac{d-1}2}}}{T} \sum_{\alpha \in \notstab} a_\pm(\la,T;\dgt(\tilde x, \alpha(\tilde y))) \, e^{\pm i\la \dgt(\tilde x,\alpha(\tilde y))}
\\
+R_\la(x,y),
\end{multline}
where, with bounds independent of $\gamma\in \varPi$,
\begin{equation}\label{p.27}
|R_\la(x,y)|\le C_T\la^{-2+\frac{d-1}2}.
\end{equation}
By invoking Young's inequality one more time, we find that by \eqref{p.26} and \eqref{p.27} we would have \eqref{p.25} if we could show that
\begin{multline}\label{p.28}
\Bigl(\int_{\tubel}\Bigl|\int_{\tubel}\rho(x,y)a_\pm(\la,T; \dgt(x,\alpha(y)))e^{\pm i\la \dgt(x,\alpha(y))} \, h(y)\, dy\Bigr|^2 \, dx\Bigr)^{\frac12}
\\
\le C_\alpha \la^{-\frac{d-1}2-\frac14}\|h\|_{L^2}, \quad \alpha \in \notstab.
\end{multline}
Here, to simplify the notation to follow, as we may, we are identifying $\tubel$ with its preimage in $D$ via $\kappa$.  So we have lifted our
calculation to $\Rd$, and $dy$ denotes the volume element coming from the metric $\tg$.

To prove this we shall use the following result which is an immediate consequence of H\"ormander's $L^2$-oscillatory integral theorem in
\cite{HorOsc} (see also   \cite[Theorem 2.1.1]{soggefica}).

\begin{lemma}\label{lemmao}  Let
$$\phi(z;x,y)\in C^\infty({\mathbb R}^m\times {\mathbb R}^{d-1}\times \Rd)$$ be real and
$$a(z;x,y)\in C_0^\infty({\mathbb R}^m\times {\mathbb R}^{d-1}\times \Rd).$$  Assume that the mixed Hessian in the $(x,y)$ variables of
$\phi$ satisfies
$$\text{Rank}\, \Bigl(\frac{\partial^2}{\partial x_j \partial y_k} \phi(z;x,y)\Bigr)\equiv d-1 \quad \text{on supp }a.$$
Then there is a uniform constant $C$ so that for $\la \ge 1$
$$\Bigl(\int_{{\mathbb R}^{d-1}}\Bigl|\int_{\Rd}e^{i\la \phi(z;x,y)}a(z;x,y) f(y)\, dy\Bigr|^2\, dx\Bigr)^{\frac12}
\le C\la^{-\frac{d-1}2}\|f\|_{L^2(\Rd)},$$
where all the integrals are taken with respect to Lebesgue measure.
\end{lemma}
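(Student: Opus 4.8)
The plan is to reduce the statement to the classical H\"ormander $L^2$ oscillatory integral theorem in the nondegenerate \emph{square} case (as in \cite[Theorem 2.1.1]{soggefica}), compensating for the dimension mismatch between $x\in\mathbb{R}^{d-1}$ and $y\in\mathbb{R}^d$ by integrating out one $y$-variable. Treating $z$ as an inert parameter throughout, the only real content is to make sure the resulting constant is uniform in $z$ (and in the extra $y$-variable); that uniformity is automatic because, after localizing, H\"ormander's constant is controlled by finitely many quantities that are uniformly bounded on the compact support of $a$.

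First I would localize. Since $a$ is compactly supported and, by hypothesis, the $(d-1)\times d$ matrix $\bigl(\prtl^2_{x_jy_k}\phi\bigr)$ has rank $d-1$ at every point of $\supp a$, continuity together with compactness lets me introduce a finite partition of unity and thereby reduce to the case in which $\supp a$ is contained in a small neighborhood of a single point $(z_0;x_0,y_0)$, chosen small enough that some fixed $(d-1)\times(d-1)$ minor of $\bigl(\prtl^2_{x_jy_k}\phi\bigr)$ is invertible throughout $\supp a$ with a uniform positive lower bound for the absolute value of its determinant. After relabeling the $y$-coordinates (possibly differently on each piece, which is harmless) we may take this minor to be $\bigl(\prtl^2_{x_jy_k}\phi\bigr)_{1\le j,k\le d-1}$, and we write $y=(y',y_d)\in\mathbb{R}^{d-1}\times\mathbb{R}$. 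Because the operator norm of the original $T$ is dominated by the sum of the norms of the finitely many localized pieces, it suffices to prove the estimate under these extra assumptions. Fix also $\chi\in C_0^\infty(\mathbb{R})$ equal to $1$ on the $y_d$-projection of $\supp a$, so that $f$ may be replaced by $\chi(y_d)f$ without altering the operator.

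Next, for each fixed value of the parameters $(z,y_d)$, the phase $(x,y')\mapsto\phi(z;x,y',y_d)$ and amplitude $(x,y')\mapsto a(z;x,y',y_d)$ satisfy the hypotheses of the classical $L^2$ oscillatory integral theorem on $\mathbb{R}^{d-1}_x\times\mathbb{R}^{d-1}_{y'}$, the relevant mixed Hessian now being a nonsingular $(d-1)\times(d-1)$ matrix. Since the constant in that theorem depends only on finitely many $C^\infty$ seminorms of $\phi$ and $a$ and on a lower bound for $|\det(\prtl^2_{xy'}\phi)|$ — all uniform over $(z,y_d)$ on the compact support — we get
\[
\Bigl\|\int_{\mathbb{R}^{d-1}}e^{i\la\phi(z;x,y',y_d)}\,a(z;x,y',y_d)\,g(y')\,dy'\Bigr\|_{L^2_x}\le C\la^{-\frac{d-1}2}\|g\|_{L^2_{y'}}
\]
with $C$ independent of $(z,y_d)$ and of $\la\ge1$. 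Applying this with $g(y')=\chi(y_d)f(y',y_d)$, then using Minkowski's integral inequality to pull the $dy_d$-integration outside the $L^2_x$-norm, and finally the Cauchy--Schwarz inequality in $y_d$ against $\chi$, yields
\[
\Bigl(\int_{\mathbb{R}^{d-1}}\Bigl|\int_{\mathbb{R}^d}e^{i\la\phi}\,a\,f\,dy\Bigr|^2dx\Bigr)^{1/2}\le C\la^{-\frac{d-1}2}\int|\chi(y_d)|\,\|f(\cdot,y_d)\|_{L^2_{y'}}\,dy_d\lesssim\la^{-\frac{d-1}2}\|f\|_{L^2(\mathbb{R}^d)},
\]
which is the claim for one localized piece; summing the finitely many pieces finishes the proof.

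I do not expect a genuine obstacle: as remarked in the text, the lemma is in essence a restatement of H\"ormander's theorem, and the only delicate point is the bookkeeping that makes $C$ truly independent of the parameter $z$ (and $y_d$). An equivalent route that makes this uniformity completely transparent is to estimate $T_\la T_\la^*$ directly. Its kernel is $K(x,x')=\int e^{i\la(\phi(z;x,y)-\phi(z;x',y))}a(z;x,y)\overline{a(z;x',y)}\,dy$, and on the localized support the mean value theorem together with the rank hypothesis gives $|\nabla_y(\phi(z;x,y)-\phi(z;x',y))|\gtrsim|x-x'|$, with all higher $y$-derivatives controlled; nonstationary phase then yields $|K(x,x')|\lesssim_N(1+\la|x-x'|)^{-N}$, so that Schur's test produces $\|T_\la T_\la^*\|_{L^2\to L^2}\lesssim\la^{-(d-1)}$ with a constant uniform in $z$, and hence the stated bound for $T_\la$.
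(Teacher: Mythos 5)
Your proposal is correct and fills in exactly what the paper leaves implicit. In the text, Lemma~\ref{lemmao} is stated with no proof beyond the remark that it "is an immediate consequence of H\"ormander's $L^2$ oscillatory integral theorem," so the only real question is whether your reduction from the rectangular $(d-1)\times d$ mixed Hessian to the square $(d-1)\times(d-1)$ case, with the extra $y$-variable and $z$ treated as inert parameters, is sound and yields a uniform constant. It is: the localization to pieces on which a fixed $(d-1)\times(d-1)$ minor has determinant bounded away from zero, the application of H\"ormander's theorem in $(x,y')$ for each fixed $(z,y_d)$, the observation that the constant depends only on finitely many seminorms and the determinant bound (all uniform on the compact support of $a$), and the Minkowski--Cauchy--Schwarz step in $y_d$ using the cutoff $\chi$ are all standard and carried out correctly. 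Your alternative $TT^*$ argument is also valid and arguably cleaner for seeing uniformity in $z$ at a glance: the rank-$(d-1)$ condition makes the $d\times(d-1)$ matrix $\partial^2_{y_kx_j}\phi$ injective, so $|\nabla_y(\phi(z;x,y)-\phi(z;x',y))|\gtrsim|x-x'|$ after localizing, and nonstationary phase plus Schur's test gives the bound. This is essentially a re-derivation of H\"ormander's theorem itself, which is why the paper can afford to cite it as immediate. No gaps.
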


We also require the following simple geometric lemma so that we can use Lemma~\ref{lemmao} to exploit the fact that our tubes are
only have width $\la^{-\frac12}$ to obtain \eqref{p.28}.

\begin{lemma}\label{lemmag}  Suppose that $\alpha \in \notstab$ and that $x_0, y_0\in \tilde \gamma \cap D$.  Then either
$\alpha(y_0)\notin \tilde \gamma$ or $\alpha^{-1}(x_0)\notin \tilde \gamma$ or both.
\end{lemma}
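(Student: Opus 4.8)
The plan is to argue by contradiction. Suppose $\alpha\in\notstab$ and that \emph{both} $\alpha(y_0)\in\tilde\gamma$ and $\alpha^{-1}(x_0)\in\tilde\gamma$; I will deduce that $\alpha\in\Stab$ or that $\alpha$ is the identity, each of which is absurd since the identity belongs to $\Stab$. Three facts already in hand will be used: (i) $(\Rd,\tg)$ is complete, simply connected and, being locally isometric to $(M,g)$ via $\kappa$, has nonpositive curvature; by Cartan--Hadamard it has no conjugate points and any two of its points are joined by a \emph{unique} geodesic, so two complete geodesics in $(\Rd,\tg)$ that share two distinct points must coincide (agreement on a segment propagates to the whole line by ODE uniqueness); (ii) $\tilde\gamma$ is a minimizing geodesic, hence $t\mapsto\tilde\gamma(t)$ is injective; (iii) every deck transformation is an isometry of $(\Rd,\tg)$ — in particular a bijection carrying complete geodesics to complete geodesics — and the translates $\{\beta(D):\beta\in\Gamma\}$ of the fundamental domain $D$ are pairwise disjoint, with $D$ itself the translate indexed by the identity.

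Write $x_0=\tilde\gamma(\tau_x)$ and $y_0=\tilde\gamma(\tau_y)$. Since $\alpha(y_0)\in\tilde\gamma$ there is $\sigma_y\in\RR$ with $\alpha(y_0)=\tilde\gamma(\sigma_y)$, and since $\alpha^{-1}(x_0)\in\tilde\gamma$ there is $\sigma_x\in\RR$ with $\alpha^{-1}(x_0)=\tilde\gamma(\sigma_x)$, i.e.\ $\alpha(\tilde\gamma(\sigma_x))=\tilde\gamma(\tau_x)$. I would then split on whether or not $\tau_y=\sigma_x$.

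If $\tau_y\ne\sigma_x$, then by (ii) the points $p:=\tilde\gamma(\tau_y)$ and $p':=\tilde\gamma(\sigma_x)$ are distinct, so $\alpha(p)\ne\alpha(p')$. But $\alpha(p)=\alpha(y_0)=\tilde\gamma(\sigma_y)$ and $\alpha(p')=\tilde\gamma(\tau_x)$, so $\tilde\gamma(\sigma_y)$ and $\tilde\gamma(\tau_x)$ are two distinct points lying on $\tilde\gamma$ and also on $\alpha(\tilde\gamma)$. By (i) and (iii), $\alpha(\tilde\gamma)=\tilde\gamma$, so $\alpha\in\Stab$, a contradiction. If instead $\tau_y=\sigma_x$, then $y_0=\tilde\gamma(\tau_y)=\tilde\gamma(\sigma_x)=\alpha^{-1}(x_0)$, so $\alpha(y_0)=x_0$. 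As $y_0\in D$ we get $\alpha(y_0)\in\alpha(D)$, while $x_0\in D$; since the translates of $D$ are pairwise disjoint and $D$ is the one indexed by the identity, $\alpha(D)\cap D\ne\emptyset$ forces $\alpha$ to be the identity — again a contradiction. This exhausts the cases and proves the lemma.

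I expect the only step needing care to be the first case, namely the claim that two distinct complete geodesics of a Cartan--Hadamard manifold meet in at most one point; this is precisely where nonpositive curvature enters, through the injectivity of $\exp$ and hence uniqueness of minimizing geodesic segments, combined with the ODE uniqueness that upgrades agreement on a segment to agreement along the whole geodesic. The remaining ingredients — injectivity of the parametrization of $\tilde\gamma$, the isometry and bijectivity of deck transformations, and the disjointness of the translates of the fundamental domain — are standard and have effectively already been recorded in the preceding discussion.
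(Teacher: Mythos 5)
Your proof is correct and rests on exactly the same two ingredients as the paper's: (a) the Cartan--Hadamard property that two distinct complete geodesics in $(\Rd,\tg)$ meet in at most one point, and (b) the pairwise disjointness of the translates $\alpha(D)$ of the fundamental domain. You phrase it as a proof by contradiction with a case split on whether $y_0=\alpha^{-1}(x_0)$, whereas the paper argues directly by splitting on the location of the intersection point of $\tilde\gamma$ and $\alpha(\tilde\gamma)$, but the two are logically equivalent reorganizations of the same argument.
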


\begin{proof}[Proof of Lemma~\ref{lemmag}]  Since $\alpha\in\notstab$, it follows that $\tilde \gamma$ and $\alpha(\tilde \gamma)$
are distinct or intersect at a unique point $P=P(\tilde \gamma,\alpha)$ (by the Cartan-Hadamard theorem).  In the first case both
$\alpha(y_0)\notin \tilde \gamma$ and $\alpha^{-1}(x_0)\notin \tilde \gamma$.  We also have the desired conclusion if $P\ne \alpha(y_0)$,
for then we must have $\alpha(y_0)\notin \tilde \gamma$ as $\alpha(y_0)\in \alpha(\tilde \gamma)$.

Suppose that we are in the remaining case where $\tilde \gamma \cap \alpha(\tilde \gamma)=\{\alpha(y_0)\}$.  Since $x_0, y_0\in D$ and
$D\cap \alpha(D)=\emptyset$, it follows that $x_0\ne \alpha(y_0)$.  Therefore, as $x_0\in \tilde \gamma$, we must have that $x_0\notin \alpha(\tilde \gamma)$.  Thus, in this case, we must have $\alpha^{-1}(x_0)\notin \tilde \gamma$, meaning that we have the desired
conclusion for this case as well.
\end{proof}

To use these two lemmas we require some simple facts about the Riemannian distance function $\dgt(x,z)$.  We recall that $(\Rd, \tilde g)$ has no
conjugate points.  Thus, the $d\times d$ Hessian $\frac{\partial^2}{\partial x_j\partial z_k}\dgt(x,z)$ has rank identically equal to $d-1$ away from the diagonal.

With this in mind, let us fix points $x_0$ and $y_0$ on our unit geodesic segment $\gamma\subset D$.  We shall now prove a local version of
our remaining estimate \eqref{p.28}.  By Lemma~\ref{lemmag}, for our given $\alpha\in \notstab$, we know that either $\alpha(y_0)\notin \tilde \gamma$
or $\alpha^{-1}(x_0)\notin \tilde \gamma$.  For the moment, let us assume the former, i.e.,
\begin{equation}\label{assumption1}
\alpha(y_0)\notin \tilde \gamma.
\end{equation}

We then have that the geodesic passing through
$z_0=\alpha(y_0)$ and $x_0\in \gamma\subset \tilde \gamma$ intersects $\gamma$ transversally.  We may therefore choose geodesic normal
coordinates in $\Rd$ vanishing at $x_0$ so that $\tilde \gamma$ is the first coordinate axis, i.e.
$$\tilde \gamma=\{(t,0,\dots, 0): \, t\in \R\},$$
and, moreover, if $x'=(x_1,\dots,x_{d-1})$ are the first $d-1$ coordinates of $x$ in this coordinate system then
$$\text{Rank } \, \Bigl( \frac{\partial^2 }{\partial x'_j\partial z_k} \dgt \bigl((x',0),z) \Bigr)\, = \, d-1 \quad \text{at } \, x'=0 \, \, \text{and } \, z=z_0=\alpha(y_0).$$
By Gauss' lemma this will be the case if the geodesic through the origin and $z_0$ intersects the hyperplane $\{x: \, x_n=0\}$ transversally as shown in
Figure 1 below, which can be achieved after performing a rotation fixing the first coordinate axis if needed.  Since $\alpha: \, \Rd\to \Rd$ is a diffeomorphism
it follows that in given our fixed points $x_0, y_0\in \gamma\subset \tilde \gamma \cap D$, we can find $\delta>0$ so that, in the above coordinates,
\begin{equation}\label{p.29}
\text{Rank } \, \Bigl( \frac{\partial^2 }{\partial x'_j\partial y_k} \dgt \bigl((x',x_n),\alpha(y))\Bigr) \, = \, d-1, \quad \text{if } \,
x\in B_\delta(x_0) \, \, \text{and } \, y_0\in B_\delta(y_0),
\end{equation}
with $B_\delta(w)$ denoting the geodesic ball of radius $\delta$ about $x\in \Rd$.

\begin{figure}
\begin{center}
\resizebox{3.5in}{2in}{\input{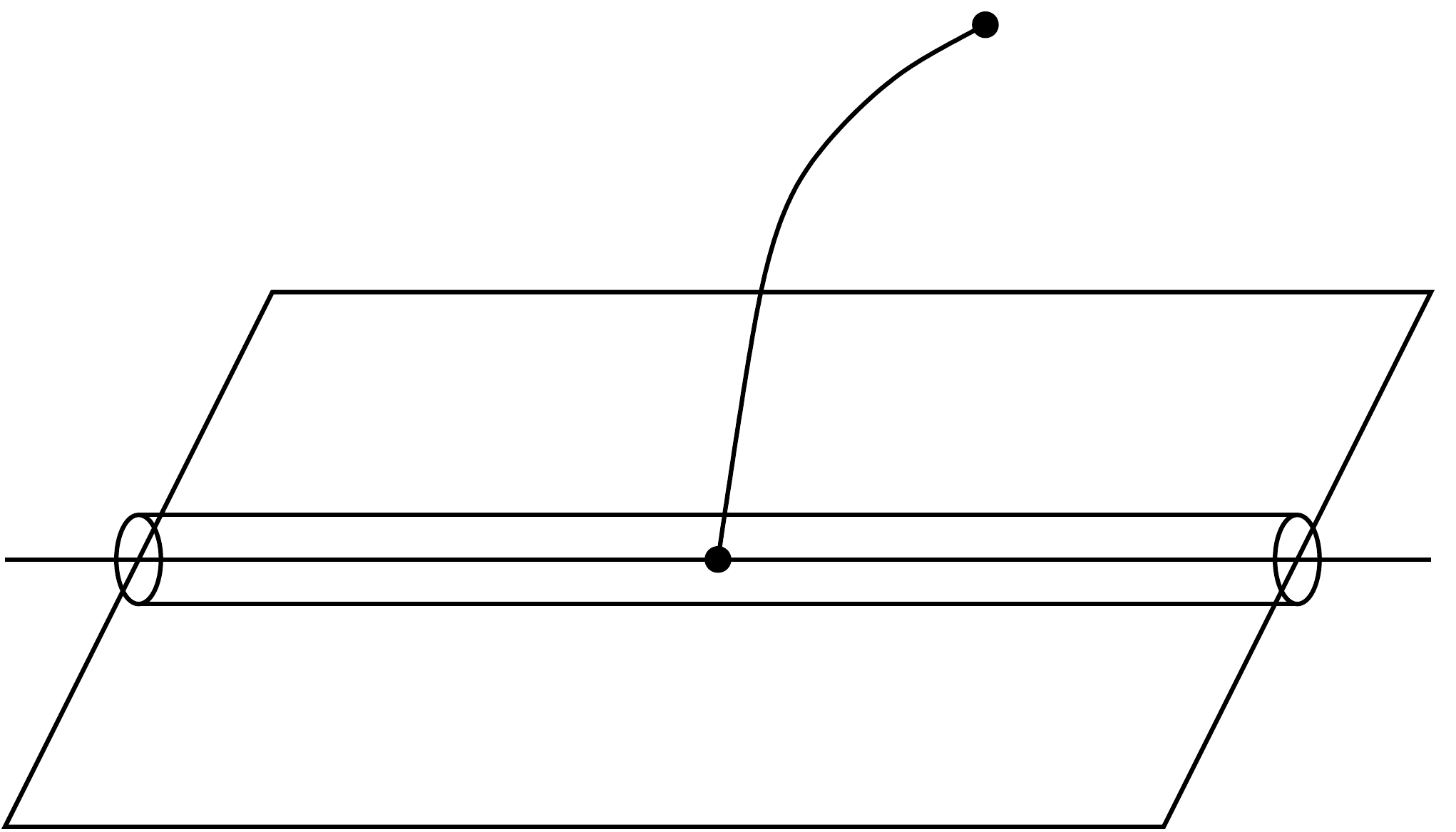_t}}
\end{center}
\caption{Transversal intersections}
\end{figure}

Next, it follows from \eqref{p.26} and Lemma~\ref{lemmao} that, in our coordinates, for each fixed value of $x_n$, we have
\begin{multline*}
\Bigl(\int_{\{x': (x',x_n)\in\tubel\cap B_\delta(x_0)\}}\Bigl| \int_{ \tubel \cap B_\delta(y_0)}
\rho(x, \alpha(y))\, a_\pm (\la,T; \dgt(x,\alpha(y)))
\\ \times
e^{\pm i \la \dgt(x,\alpha(y))} \, h(y)\, dy\, \Bigr|\, dx'\Bigr)^{\frac12}
\le C_\alpha \la^{-\frac{d-1}2}\Bigl(\int |h(y)|^2 \, dy\Bigr)^{\frac12}.
\end{multline*}
Since $|x_n|\lesssim \la^{-\frac12}$ in $\tubel$, from this, we deduce that, under our assumption \eqref{assumption1}, we have that
\begin{multline}\label{balltube}
\Bigl(\int_{\tubel\cap B_\delta(x_0)}\Bigl| \int_{\tubel \cap B_\delta(y_0)}
\rho(x, \alpha(y))\, a_\pm (\la,T; \dgt(x,\alpha(y)))
\\ \times
e^{\pm i \la \dgt(x,\alpha(y))} \, h(y)\, dy\, \Bigr|\, dx\Bigr)^{\frac12}
\le C_\alpha \la^{-\frac{d-1}2} \, \la^{-\frac14} \, \Bigl(\int |h(y)|^2 \, dy\Bigr)^{\frac12}.
\end{multline}

Lemma~\ref{lemmag} tells us that if we do not have \eqref{assumption1}  then
\begin{equation}\label{assumption2}
\alpha^{-1}(x_0)\notin \tilde \gamma.\end{equation}
We claim that for our fixed points $x_0, y_0\in \gamma$ we can find $\delta>0$ so that \eqref{balltube} remains valid for this case as well.  To do this, we just use the fact that our $\alpha\in \notstab$ is an isometry and therefore
$$\dgt(x,\alpha(y))=\dgt(\alpha^{-1}(x),y).$$
Consequently, since $\alpha^{-1}\in \notstab$, we obtain \eqref{balltube} under the assumption \eqref{assumption2} since it is essentially just the dual version of the case we just handled, and so follows from the above argument after taking adjoints.

Since we have shown that \eqref{balltube} holds either under assumption \eqref{assumption1} or \eqref{assumption2}, Lemma~\ref{lemmao} tells us that given any two fixed points $x_0, y_0\in \gamma$ we can find a $\delta>0$ so that \eqref{balltube} is valid.  By the compactness of our unit geodesic
segment $\gamma$, this implies \eqref{p.28}, which completes the proof of the estimate \eqref{p.2} for our fixed $\gamma\in \varPi$.

It is straightforward to see how to obtain the stronger estimate \eqref{p.1}, which involves uniform bounds over $\varPi$, by using the proof of
\eqref{p.2}.  We use the fact that if $T\gg 1$ is fixed and if $\gamma\in \varPi$ is fixed then there is a neighborhood ${\mathcal N}(\gamma)$ of
$\gamma$ in $\varPi$ so that if $\alpha \in \notstab$ and the geodesic distance between our fundamental domain $D$ and its image $\alpha(D)$
is $\le 2T$, then we also have that $\alpha \notin \Gamma\backslash \text{Stab }(\tilde \gamma_0)$ for any $\gamma_0\in {\mathcal N}(\gamma)$.
This follows from the fact that there are only finitely many $\alpha\in \Gamma$ for which the distance between $D$ and $\alpha(D)$ is $\le 2T$, and
if $\alpha$ is not a stabilizer for $\tilde \gamma$ then it is also not a stabilizer for nearby geodesics.

Because of this and the uniform dependence on the smooth parameter $z$ in Lemma~\ref{lemmao}, if we define $\Soscg$ to be the
operator whose kernel is given by \eqref{p.21}, we have the uniform bounds
$$\|\Soscg h\|_{L^2(\tubel)}\le C_T\la^{-\frac14}\|h\|_{L^2}, \quad \text{if } \, \gamma_0\in {\mathcal N}(\gamma) \,
\, \text{and  supp }h\subset \mathcal{T}_{\l^{-1/2}}(\gamma_0).
$$
If then $\Sstabg=S_\la-\Soscg$ is then defined using $\gamma$, then the proof of \eqref{p.22} clearly also yields the following variant
$$\|\Sstabg h\|_{L^2(\tubel)}\le \bigl(CT^{-\frac12}+C_T\la^{-2}\bigr) \,
\|h\|_{L^2}, \, \,  \text{if } \, \gamma_0\in {\mathcal N}(\gamma) \,
\, \text{and  supp }h\subset \mathcal{T}_{\l^{-1/2}}(\gamma_0).
$$
Together  these two estimates imply the analog of \eqref{p.1} where, instead of having the geodesic segments range over $\varPi$, we have them
range over ${\mathcal N}(\gamma)$ and $\Lambda_\e=\Lambda_\e({\mathcal N}(\gamma))$ depends on ${\mathcal N}(\gamma)$.  By the
compactness of $\varPi$, this in turn yields \eqref{p.1}.

\bigskip

To wrap things up, we also need to prove Lemma~\ref{lemmap}.

\begin{proof}[Proof of Lemma~\ref{lemmap}]
Since $\Hat m(\tau)=0$ when $|\tau|>1/2$ it follows that the left side of \eqref{p.15},
\begin{equation}\label{p.30}
\frac1{2\pi T} \int_{-T}^T(1-\beta(\tau))\, \Hat m(\tau/T) \, e^{i\la \tau} \, \bigl(\cos \tau \sqrtg\bigr)(\tilde x, \tilde y)\, d\tau,
\end{equation}
vanishes when $\dgt(\tilde x,\tilde y)>T$.  Since $\beta(\tau)=1$ for $|\tau|\le 3/2$, by \eqref{p.14}, it is $O_{N,T}((1+\la)^{-N})$ for any
$N=1,2,3,\dots$ if $\dgt(\tilde x, \tilde y)\le 1$.  Therefore, we need only to prove the assertions in Lemma~\ref{lemmap} when $1\le \dgt(\tilde x,\tilde y)\le T$.

To prove this, we shall use the Hadamard parametrix (see e.g. \cite{Hor3} and \cite[Chapter 2]{soggehang}).  Since $(\Rd,\tilde g)$ has nonpositive curvature, for
$0\le \tau \le T$ we can write
\begin{align}\label{p.31}
\bigl(\cos \tau \sqrtg\bigr)(\tilde x, \tilde y)&=\rho(\tilde x,\tilde y) \, (2\pi)^{-d}\int_{\Rd}e^{i\dgt(\tilde x, \tilde y)\xi_1}\cos \tau|\xi| \, d\xi
\\
&=\sum_\pm \int_{\Rd}e^{i\dgt(\tilde x,\tilde y)\xi_1} \alpha_\pm(\tau,\tilde x, \tilde y,|\xi|)\, e^{\pm i\tau |\xi|}\, d\xi
\notag
\\
&+R(\tau,\tilde x, \tilde y), \notag
\end{align}
where the leading Hadamard coefficient, $\rho$,  is smooth and uniformly bounded (by the curvature hypothesis), and if $m\in {\mathbb N}$ is fixed we can
have $\partial_\tau^j R(\tau, \tilde x,\tilde y)\in L^\infty_{loc}$, $0\le j\le m$, and also
\begin{multline}\label{p.32}
|\partial^\beta_{\tau,\tilde x, \tilde y}\partial^j_r \alpha_\pm(\tau, \tilde x, \tilde y,r)| \le  C_{T,K,\beta,j} \, r^{-2-j},
\\ \text{if } \, r\ge 1, \, \, 0\le \tau\le T, \, \, j=0, 1, 2,\dots, \, \, \text{and } \, \tilde x, \tilde y\in K\Subset \Rd.
\end{multline}

We also recall (see e.g. \cite{soggefica}) that we can write the Fourier transform of Lebesgue measure on the sphere in $\Rd$ as
\begin{equation}\label{p.33}
\int_{S^{d-1}}e^{ix\cdot \omega}\, d\sigma(\omega) \, = \, |x|^{-\frac{d-1}2}\Bigl(c_+(|x|)e^{i|x|}+c_-(|x|)e^{-i|x|}\Bigr),
\end{equation}
where for each $j=0,1,2,\dots$, we have
\begin{equation}\label{p.34}
|\partial^j_r c_+(r)|+|\partial^j_r c_-(r)|\le C_j r^{-j}, \quad r\ge 1.
\end{equation}

If in \eqref{p.30} we replace $(\cos \tau \sqrtg)(\tilde x, \tilde y)$ by the first term in \eqref{p.31}, the resulting expression equals
$\rho(\tilde x, \tilde y)$ times a fixed multiple of
\begin{multline}\label{p.35}
\frac1{2\pi T} \int_{-T}^T \int_{\Rd} \Hat m(\tau/T) \, e^{i\la \tau} \, \cos (\tau|\xi|) \, e^{i\dgt(\tilde x, \tilde y)\xi_1} \, d\xi d\tau
\\
=\sum_{\pm}\frac1{2\pi T }\int_{-T}^T\int_0^\infty \Hat m(\tau/T) \, e^{i\la \tau} \, \cos (\tau r) e^{\pm ir \dgt(\tilde x, \tilde y)}
c_\pm\bigl(\dgt(\tilde x, \tilde y)r\bigr) \, \frac{r^{\frac{d-1}2}}{(\dgt(\tilde x,\tilde y))^{\frac{d-1}2}} \, dr d\tau
\end{multline}
minus
\begin{equation}\label{p.36}
\sum_\pm \frac1{2\pi T}
\int_{-2}^2\int_0^\infty
\beta(\tau) \,
 \Hat m(\tau/T) \, e^{i\la \tau} \, \cos (\tau r) e^{\pm ir \dgt(\tilde x, \tilde y)}
c_\pm\bigl(\dgt(\tilde x, \tilde y)r\bigr) \, \frac{r^{\frac{d-1}2}}{(\dgt(\tilde x,\tilde y))^{\frac{d-1}2}} \, dr d\tau.
\end{equation}
If we replace $\cos(\tau r)$ by $e^{-i\tau r}$ in the right side of \eqref{p.35}, the resulting expression equals  the sum over $\pm$ of
\begin{multline*}\int_0^\infty m\bigl(T(\la-r)\bigr) \, c_\pm \bigl(\dgt(\tilde x, \tilde y) r\bigr) e^{\pm ir\dgt(\tilde x, \tilde y)}\, \frac{r^{\frac{d-1}2}}{(\dgt(\tilde x,\tilde y))^{\frac{d-1}2}}  \, dr
\\
=\frac{\la^{\frac{d-1}2}}{T} e^{\pm i\la \dgt(\tilde x, \tilde y)} a_\pm(\la,T; \dgt(\tilde x, \tilde y)),
\end{multline*} 
where, using the fact that $m\in {\mathcal S}(\R)$ and \eqref{p.34}, $a_\pm$ satisfies \eqref{p.16}.  If in \eqref{p.35} we replace
$\cos(\tau r)$ by $e^{i\tau r}$, then this argument also implies that the resulting expression is $O_{N,T}((1+\la)^{-N})$ for any $N=1,2,3,\dots$.
Thus, modulo such an error $\rho$ times the terms in \eqref{p.35} can be written as the first term in the right side of \eqref{p.15} with
\eqref{p.16} being valid.  Since this argument shows that the same is the case for \eqref{p.36}, we conclude that the first term in the right
side of \eqref{p.31}, up to  $O_{N,T}((1+\la)^{-N})$ errors, gives us the first term in the right side of \eqref{p.15}.

This argument  and \eqref{p.32} also shows that if in \eqref{p.15} we replace $(\cos \tau \sqrtg)(\tilde x, \tilde y)$ by the second
term in the right side of \eqref{p.31}, then we get a term obeying the bounds in \eqref{p.17}.  Since, as noted we can take the remainder
term in \eqref{p.31} to satisfy for a given $m\in {\mathbb N}$, $\partial^j_\tau R(\tau, \tilde x, \tilde y)\in L^\infty_\loc$, $j=0,1,\dots, m$, we also see that if we choose $m$ large enough, the same is true for it.
\end{proof}

\end{document}